\documentclass[journal]{IEEEtran}

\usepackage[hidelinks]{hyperref}
\usepackage{cite}
\usepackage{amsmath,amssymb,amsfonts,amsthm}
\usepackage{algorithmic}
\usepackage{graphicx}
\usepackage{algorithm,algorithmic}
\usepackage{textcomp}
\usepackage{mathtools}
\usepackage{cleveref}
\usepackage{pgfplots}
\pgfplotsset{compat=1.18}
\usepackage{subcaption}

\usepackage{dblfloatfix}
\usepackage{booktabs} 
\usepackage{enumerate}

\usepgfplotslibrary{groupplots}

\newtheorem{theorem}{Theorem}[section]
\newtheorem{lemma}[theorem]{Lemma}

\newtheorem{remark}[theorem]{Remark}

\newtheorem{ass}{Assumption}

\newcommand{\R}{\mathbb{R}}
\newcommand{\N}{\mathbb{N}}

\newcommand\setD{\mathcal{D}}
\newcommand\setC{\mathcal{C}}
\newcommand\setS{\mathcal{S}}
\newcommand{\Hy}{\mathcal{H}}

\newcommand\one{{\mathbf{1}}}
\newcommand\Metz{\mathcal{M}}
\newcommand\K{\mathcal{K}}
\newcommand\KL{\mathcal{KL}}
\newcommand\setA{\mathcal{A}}

\newcommand\D{\mathrm d}

\newcommand\J{\mathrm j}
\newcommand\abs[1]{\left\lvert#1\right\rvert}
\newcommand\norm[1]{\left\lVert#1\right\rVert}
\newcommand\sigpow[2]{\lceil #1 \rfloor^{#2}}
\newcommand\inner[2]{\left\langle #1 ,\, #2 \right\rangle}

\DeclareMathOperator\dom{dom}
\DeclareMathOperator\sign{sign}

\DeclareMathOperator\diag{diag}
\DeclareMathOperator*\argmin{arg\, min}

\definecolor{QualPurple}{HTML}{A559AA}
\definecolor{QualTeal}{HTML}{59A89C}

\allowdisplaybreaks

\def\BibTeX{{\rm B\kern-.05em{\sc i\kern-.025em b}\kern-.08em
    T\kern-.1667em\lower.7ex\hbox{E}\kern-.125emX}}

\begin{document}
\title{
Practical Stabilization of Switched Affine Systems under Dwell Time: From Polytopic to \texorpdfstring{$p$}{p}-Norm Lyapunov Functions
%
%
%
}

\author{Felipe Cinto, Alexis J. Vallarella, Hernan Haimovich, Paulo C. Pellanda
\thanks{This study was financed in part by the Coordenação de Aperfeiçoamento de Pessoal de Nível Superior - Brasil (CAPES) - Finance Code 001, and by Agencia I+D+i, Argentina, under grant PICT 2021-I-A-0730.}
\thanks{F. Cinto, A. J. Vallarella and H. Haimovich are with the Centro Internacional Franco-Argentino de Cs. de la Informaci\'on y de Sistemas (CIFASIS), UNR--CONICET, Argentina (e-mails: \{cinto, vallarella, haimovich\}@cifasis-conicet.gov.ar).}
\thanks{Paulo César Pellanda is with the
Departments of Defense Engineering and Electrical Engineering, Military Institute of Engineering, Rio de Janeiro 22290-270, Brazil (e-mail: pellanda@ime.eb.br).}
}

\maketitle

\begin{abstract}
This paper addresses the stabilization of switched affine systems under dwell-time constraints around a desired operating point that is not a common equilibrium of the subsystems. Since asymptotic stabilization is generally unattainable in this setting, the objective is to ensure that trajectories converge to and remain within a prescribed region containing the desired operating point. The main contribution is the development of Lyapunov-based tools that accommodate target regions more general than those induced by quadratic Lyapunov functions.
In particular, we establish a constructive Lyapunov characterization for a broad family of weighted $p$-norm functions, which generalizes the classical quadratic Lyapunov equivalence and underpins the proposed hybrid-control and dwell-time developments.
Focusing on the case in which the desired region is a polytope,
and building on existing Lyapunov-based methods, we develop a constructive two-step method that starts from a polytopic Lyapunov certificate for the average system and yields a smooth weighted $p$-norm Lyapunov function for all sufficiently large $p$. We prove that this extension guarantees practical asymptotic stability of the desired operating point and yields a Lyapunov-certified invariant region under minimum dwell-time switching. Numerical examples illustrate the effectiveness of the proposed method.
\end{abstract}

\begin{IEEEkeywords}
Switched systems,
Dwell-time switching,
Stability of hybrid systems,
Nonlinear systems
\end{IEEEkeywords}


\section{Introduction} \label{sec:intro}

\IEEEPARstart{S}{witched} systems provide a modeling framework for dynamical systems that exhibit both continuous dynamics and discrete transitions \cite{liberzon2003switched}. 
Such systems consist of multiple subsystems, with a switching signal selecting which subsystem is active at each time.
In the design of switching strategies, the literature primarily distinguishes between two approaches: \emph{arbitrarily fast switching} \cite{bolzern2004switchedaffine}, in which the active subsystem can change without restrictions on timing, and \emph{dwell-time switching} \cite{briat2015convex}, which imposes constraints on the time interval between consecutive switches.

Stabilization of an equilibrium point has been extensively investigated for switched linear systems (SLS), in which all subsystems share the origin as a common equilibrium point \cite{cheng2005stabilization,geromel2006stabilitystabilization,geromel2008feedback,antsaklis2009survey}.
By contrast, for switched affine systems, the origin need not be an equilibrium point of any subsystem, and asymptotic stabilization can be achieved only through arbitrarily fast switching.
In many practical applications, however, dwell-time constraints limit the switching frequency \cite{russoACC, cinto2025switching}.
In such cases, only practical stabilization is achievable, in the sense that the state converges to and remains within a desired neighborhood of the desired point. 

Switched systems subject to dwell-time constraints have attracted substantial research attention because they capture practical switching limitations. For example,
\cite{briat2015convex} provides convex conditions for robust stabilization of uncertain switched systems under mode-dependent dwell-time; 
\cite{Makarenkov201889} presents closed-form dwell-time formulas that confine oscillations to prescribed neighborhoods of equilibria in planar switched affine systems, with direct applications to non-spiking neuron models; and 
\cite{albea2019practical} uses space or time regularization within a hybrid systems framework to achieve practical stabilization of operating points in switched affine systems while guaranteeing minimum dwell-times.
More recently, 
\cite{Yin2023} investigates set convergence and practical stability for switched systems with distinct equilibria and mixed stable-unstable modes by means of multiple Lyapunov-like functions outside compact sets;
\cite{DiFerdinando20241691} provides a methodology for sampled-data control of nonlinear asynchronous systems using steepest descent switching feedback to ensure semiglobal practical stability under arbitrary dwell-times;
\cite{Ghawash2025} proposes a real-time multitask learning framework to approximate model predictive control solutions for switched affine systems subject to dwell-time constraints, thereby facilitating embedded implementation; and
\cite{Russo2025} proposes switching laws relying on Lyapunov-Metzler inequalities to stabilize switched linear and affine systems while guaranteeing an upper bound on a quadratic performance cost.
Most of these references require, in one way or another, the use of quadratic Lyapunov functions.

Despite the well-known advantages of quadratic Lyapunov functions, more general function classes may be required or advantageous in specific applications. Notable alternatives include polytopic \cite{blanchini1995nonquadratic} and higher-degree polynomial Lyapunov functions \cite{Parrilo2003}. In practical stabilization problems, target regions are often specified through suitable bounds on the individual state variables, thereby defining a polytopic region in the state space, rather than by an ellipsoidal region that combines all variables simultaneously. 
Since the Lyapunov-based strategy of \cite{albea2019practical} yields convergence regions as sublevel sets of a quadratic Lyapunov function, matching the geometry of these sublevel sets to polytopic target regions motivates the consideration of polytopic Lyapunov functions.
However, the lack of differentiability of polytopic functions introduces important challenges. In particular, \cite{cinto2024polytopic} shows that replacing a quadratic Lyapunov function with a polytopic one, while leaving the remaining conceptual framework unchanged, may fail to guarantee stability when the switching strategy selects the subsystem yielding the steepest decrease of the Lyapunov function, as in \cite{albea2019practical}. 

This work addresses this limitation by considering a family of $p$-norm Lyapunov functions that interpolates between the quadratic and polytopic cases: the quadratic case is recovered for $p=2$, whereas the polytopic case is approached for large values of $p$.
Our main contribution is the generalization of the steepest-decrease switching strategy of \cite{albea2019practical} to this broader class of Lyapunov functions. 
On the theoretical side, we establish a characterization of the Hurwitz property in terms of weighted $p$-norm Lyapunov functions, thereby extending the classical quadratic Lyapunov equivalence to the whole family $p\in(1,\infty)$; the proof is constructive and based on Jordan decomposition. On the practical side, 
starting from a polytopic Lyapunov certificate for the average system, we develop a constructive two-step procedure to obtain a smooth weighted $p$-norm Lyapunov function that can be used directly in the proposed hybrid switching strategies under unconstrained and dwell-time switching.
This generalization offers practical advantages, such as allowing a larger minimum dwell time for a given target region or yielding a smaller practical stability region for a prescribed minimum dwell-time.

The remainder of this paper is organized as follows.
Section~\ref{sec:hybrid} introduces the weighted $p$-norm Lyapunov functions, establishes a generalized Lyapunov equivalence, and develops the hybrid switching strategy, proving uniform global asymptotic stability (UGAS) of the origin under arbitrarily fast switching.
Section~\ref{sec:dwell_time} adapts the space and time regularization techniques of \cite{albea2019practical} to this setting, establishing practical UGAS under minimum dwell-time constraints.
Given a polytopic function that characterizes the desired target region,  Section~\ref{sec:construction} provides a procedure to certify that this function is a Lyapunov function, and then to approximate it by a $p$-norm Lyapunov function for a sufficiently large $p$.
Numerical examples are presented in Section~\ref{sec:examples}, illustrating that increasing $p$ beyond the quadratic case yields practical advantages, such as larger minimum dwell times for a given target region, or smaller practical stability regions for a prescribed minimum dwell time. Section \ref{sec:conclusion} concludes.


{\small
\textbf{Notation.}
Let $\R$, $\R_{\geq 0}$, and $\N$ denote the sets of real, nonnegative real, and natural numbers, respectively, and let $\N_N \coloneqq \{1,\dots,N\}$ for $N \in \N$.
For $x, y \in \R^n$, $x^\top$ denotes transpose, and  $\langle x, y \rangle \coloneqq x^\top y$ the inner product.
The symbol $\one$ denotes a vector of appropriate dimension with all entries equal to $1$.
Absolute value, $p$-norm, and infinity norm are denoted by $\abs{\boldsymbol{\cdot}}$, $\norm{\boldsymbol{\cdot}}_p$, and $\norm{\boldsymbol{\cdot}}_\infty$, respectively.
For a closed set $\setA \subset \R^n$, define $\norm{x}_\setA \coloneqq \inf_{y \in \setA} \norm{x-y}_2$.
For $x\in\R$, define the signed-power function $\sigpow{x}{p} \coloneqq \abs{x}^p \sign(x)$; for $x\in\R^n$, define $\sigpow{x}{p}$ componentwise by $\sigpow{x}{p} \coloneqq [\sigpow{x_1}{p} \dots \sigpow{x_n}{p}]^\top$.
For $X,Y \in \R^{r \times n}$, $X_{i,j}$ denotes the $(i,j)$-th entry, $X_{i:}$ the $i$-th row, and $X \preceq Y$ denotes componentwise inequality, i.e., $X_{i,j} \leq Y_{i,j}, \,\, \forall i,j$. The norm of $X$ induced by the $p$-norm is denoted by $\norm{X}_p$, and its Moore--Penrose pseudoinverse by $X^\dag$.
The identity matrix of order $k$ is denoted by $I_k$.
$\mathcal{N}_k$ denotes the nilpotent matrix of order $k$ with ones on the superdiagonal and zeros elsewhere, i.e., $[\mathcal{N}_k]_{i,j} = 1$ if $j = i+1$ and $0$ otherwise.
The operator $\diag(M_1,\dots,M_k)$ denotes the block-diagonal matrix with diagonal blocks $M_1,\dots,M_k$.
For $A \in \R^{m \times n}$ and $B \in \R^{p \times q}$, $A \otimes B \in \R^{mp \times nq}$ denotes the Kronecker product.
Given $\Gamma \in \R^{r \times r}$, its associated Metzler matrix\footnote{A matrix is Metzler if its off-diagonal entries are nonnegative.}
$\Metz(\Gamma) \in \R^{r \times r}$ is defined by $\Metz(\Gamma)_{i,k} = \Gamma_{i,k}$ if $i = k$, and $\Metz(\Gamma)_{i,k} = |\Gamma_{i,k}|$ otherwise.
For a square matrix $M \in \R^{r \times r}$, the matrix measure associated with the $p$-norm (also called logarithmic norm \cite{soderlind2006logarithmic}) is defined as
\begin{equation}
    \mu_p(M) \coloneqq  \lim_{h\to0^+} \frac{\norm{I+hM}_p-1}{h}.
\end{equation}
A function $\alpha: \R_{\geq 0} \to \R_{\geq 0}$ is a class $\K_\infty$ function ($\alpha\in\K_\infty$) if it is continuous, strictly increasing, unbounded, and $\alpha(0)=0$.
A function $\beta: \R_{\geq 0} \times \R_{\geq 0} \to \R_{\geq 0}$ is a class $\KL$ function ($\beta\in\KL$) if it is nondecreasing in its first argument, nonincreasing in its second argument, $\lim_{r\to0^+} \beta(r, s) = 0$ for each $s \in \R_{\geq0}$, and $\lim_{s\to\infty} \beta(r, s) = 0$ for each $r \in \R_{\geq0}$.
}


\section{Hybrid Control via Weighted \texorpdfstring{$p$}{p}-Norms}
\label{sec:hybrid}

\subsection{Problem Statement}
Consider the switched affine system
\begin{equation}
    \label{eq:sas}
    \dot x(t) = A_{\sigma(t)} x(t) + b_{\sigma(t)} \eqqcolon f_{\sigma(t)}(x),
\end{equation}
where $x \in \R^n$ is the state, $\sigma : \R_{\geq0} \to \N_N$ is the switching signal, and $A_i \in \R^{n \times n}$, $b_i \in \R^{n}$ characterize subsystem $i \in \N_N$.
We focus on the stability of the origin, since the analysis for an arbitrary operating point follows by a suitable change of variables.  

Define the simplex
\begin{equation}
\label{eq:unit_simplex}
    \Lambda \coloneqq \left\{ \lambda \in \R^N : \lambda_i \in [0,1] \ \forall i\in\N_N, \ \lambda^\top \one = 1 \right\},
\end{equation}
and, for each $\lambda \in \Lambda$, define the convex combinations
\begin{equation}
    \label{eq:Alam_blam}
    A_\lambda \coloneqq \sum_{i=1}^N \lambda_i A_i, \quad
    b_\lambda \coloneqq \sum_{i=1}^N \lambda_i b_i.
\end{equation}
We rely on the following standard assumption for the stabilization of switched affine systems \cite{bolzern2004switchedaffine,albea2019practical,cinto2024polytopic}.

\medskip
\begin{ass}
\label{ass:1}
There exists $\lambda^* \in \Lambda$ such that $A_{\lambda^*}$ is Hurwitz and $b_{\lambda^*} = 0$.
\end{ass}
\medskip

Under Assumption~\ref{ass:1}, the average system
\begin{equation}
    \label{eq:average_sys}
    \dot x(t) = A_{\lambda^*} x(t)
\end{equation}
is globally asymptotically stable.
The objective of this work is to develop a framework for the design of a state-dependent switching strategy $\tilde\sigma(x(t)) = \sigma(t)$, based on weighted $p$-norm Lyapunov functions for \eqref{eq:average_sys}, that guarantees UGAS of the origin (in the sense of \cite{goebel2012hybrid}) under unconstrained switching, i.e., arbitrarily fast switching, and practical UGAS under dwell-time constraints.


\subsection{Weighted p-norm Lyapunov Functions}

Given a full column rank matrix $R \in \R^{r \times n}$ and $p\in(1,\infty)$, consider the weighted $p$-norm function
\begin{equation}
    V_{R,p}(x) \coloneqq \frac{1}{p} \norm{Rx}_p^p = \frac{1}{p} \sum_{i \in \N_r} \abs{R_{i:} x}^p.
     \label{eq:p-normlyapfunction}
\end{equation}
Note that $V_{R,2} = \frac{1}{2} x^\top P x$, where $P=R^\top R$, is the classical quadratic Lyapunov function.
For any finite $p>1$, the function $V_{R,p}(x)$ is differentiable on $\R^n$, with the gradient given by the column vector
\begin{equation}
     \nabla V_{R,p}(x) =  \sum_{i\in\N_r} \sigpow{R_{i:} x}{p-1} R_{i:}^\top = R^\top \sigpow{R x}{p-1} .  \label{eq:gradient}
\end{equation}
To achieve the objective stated above, we seek a weighted $p$-norm Lyapunov function for the average system \eqref{eq:average_sys}.
The following theorem characterizes the Hurwitz property of a matrix $A$ in terms of the existence of such a Lyapunov function, thereby generalizing the classical Lyapunov equation.


\medskip
\begin{theorem}
\label{th:Lyap}
Let $A \in \R^{n \times n}$ and $p \in (1,\infty)$. Then $A$ is Hurwitz if and only if there exist $r,s \in \N$, and full column rank matrices $R \in \R^{r \times n}$ and $S \in \R^{s \times n}$ such that
\begin{equation}
    \label{eq:p-norm-lyap-cross}
    \inner{\nabla V_{R,p}(x)}{Ax} + V_{S,p}(x) \leq 0
    \quad \text{for all } x \in \R^n.
\end{equation}
\end{theorem}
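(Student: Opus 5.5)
We prove sufficiency by a comparison argument and necessity by an explicit construction built on the real Jordan form.

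\emph{Sufficiency.} Assume \eqref{eq:p-norm-lyap-cross} holds. Since $R$ and $S$ have full column rank, $x\mapsto\norm{Rx}_p$ and $x\mapsto\norm{Sx}_p$ are norms on $\R^n$. By equivalence of norms there is $c>0$ with $V_{S,p}(x)\ge c\,V_{R,p}(x)$ for all $x$. Along solutions of $\dot x=Ax$, using \eqref{eq:gradient}, we get $\frac{\D}{\D t}V_{R,p}(x(t))\le -c\,V_{R,p}(x(t))$. Hence $V_{R,p}(x(t))\le e^{-ct}V_{R,p}(x(0))$, so $\norm{x(t)}\to0$ exponentially for every initial condition. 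Therefore $A$ is Hurwitz.

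\emph{Necessity.} Let $A$ be Hurwitz. Take the real Jordan decomposition $A=TJT^{-1}$ and use the scaling $D_\varepsilon=\diag(1,\varepsilon^{-1},\varepsilon^{-2},\dots)$ inside each block. This replaces $J$ by $J_\varepsilon$, whose diagonal blocks are either $\lambda I_k+\varepsilon\mathcal N_k$ with $\lambda<0$, or $I_k\otimes\Omega+\varepsilon\,\mathcal N_k\otimes I_2$ with $\Omega=\begin{bmatrix}a&-b\\ b&a\end{bmatrix}$ and $a<0$. We will set $R=\tilde R\,(TD_\varepsilon^{-1})^{-1}$ for a block-diagonal $\tilde R$ still to be chosen, and $S=\delta R$ for a small $\delta>0$. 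In the coordinates $z=(TD_\varepsilon^{-1})^{-1}x$, \eqref{eq:p-norm-lyap-cross} then reads
\[
\inner{\sigpow{\tilde Rz}{p-1}}{\tilde RJ_\varepsilon z}\le -\tfrac{\delta^p}{p}\norm{\tilde Rz}_p^p .
\]
Because everything is block diagonal, it suffices to verify this blockwise.

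\emph{Real blocks.} Take $\tilde R=I$. The left-hand side equals
\[
\lambda\norm{z}_p^p+\varepsilon\sum_i\sigpow{z_i}{p-1}z_{i+1}.
\]
Young's inequality gives $|z_i|^{p-1}|z_{i+1}|\le\frac{p-1}{p}|z_i|^p+\frac1p|z_{i+1}|^p$, so the whole expression is at most $(\lambda+\varepsilon)\norm{z}_p^p$. This is negative definite once $\varepsilon<|\lambda|$.

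\emph{Complex blocks.} This is the main obstacle. For $p\neq2$ the rotational part $b$ does not cancel in the plain $p$-norm, which is exactly why $r>n$ is allowed. Instead of $I_2$, each $2$-dimensional coordinate pair $w$ is mapped by $Q_m\in\R^{m\times2}$, whose rows are $(\cos\theta_k,\sin\theta_k)$ with $\theta_k=2\pi k/m$. Writing $w=\rho(\cos\varphi,\sin\varphi)$, we have $\norm{Q_mw}_p^p=\rho^p g_m(\varphi)$, where
\[
g_m(\varphi)=\textstyle\sum_k|\cos(\theta_k-\varphi)|^p .
\]
The derivative along $\dot w=\Omega w$ is $\rho^p\bigl(pa\,g_m(\varphi)+b\,g_m'(\varphi)\bigr)$. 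Since $p>1$, $\varphi\mapsto|\cos\varphi|^p$ is $C^1$. Hence $g_m/m$ and $g_m'/m$ are Riemann sums that converge uniformly to $\frac1{2\pi}\int_0^{2\pi}|\cos|^p>0$ and to $0$, respectively. For $m$ large this gives $|b|\,|g_m'|\le\frac{p|a|}{2}g_m$, so the derivative is at most $\frac{pa}{2}\rho^pg_m$. For the full complex Jordan block we use $\tilde R=I_k\otimes Q_m$. The coupling term $\varepsilon\,\mathcal N_k\otimes I_2$ then contributes $\varepsilon\sum_i\inner{\sigpow{Q_mw_i}{p-1}}{Q_mw_{i+1}}$. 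We bound it by Hölder plus Young together with the equivalence $\norm{Q_m w}_p\asymp\norm{w}$, and absorb it for $\varepsilon$ small.

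\emph{Conclusion.} Choose $m$ first and then $\varepsilon$ uniformly over all blocks; finitely many blocks make this possible. This yields $\inner{\nabla V_{R,p}}{Ax}\le-\eta\,\norm{Rx}_p^p$ for some $\eta>0$. Finally, $S=(p\eta)^{1/p}R$ has full column rank and gives \eqref{eq:p-norm-lyap-cross}, since then $V_{S,p}(x)=\eta\norm{Rx}_p^p$.
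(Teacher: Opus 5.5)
Your proof is correct, and its architecture matches the paper's. Both use the real Jordan form and a diagonal rescaling that puts $\varepsilon$ on the superdiagonal (the paper's $E(\varepsilon,n)$ is a scalar multiple of your $D_\varepsilon$). Both put identity weights on real blocks, and your direct Young estimate gives the same bound as the paper's $\mu_p(sI_n+\varepsilon\mathcal{N}_n)\le s+\varepsilon\norm{\mathcal{N}_n}_p$. Both use circle-sampling weights $I_k\otimes Q_m$ on complex blocks with H\"older/Young for the coupling, take $S$ as a multiple of $R$, and use the same comparison argument to go from the inequality to the Hurwitz property.

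The genuinely different ingredient is how you bound the rotational part of a complex block. The paper uses Polanski's exact intertwining $Q(m)C(\alpha,\omega)=W(\alpha,\omega)Q(m)$ with a skew-circulant $W$ satisfying $\mu_1(W)=\mu_\infty(W)=-\zeta$, where $\zeta=\alpha-\omega\tan(\pi/(2m))$. It then invokes convexity of $1/p\mapsto\mu_p$ (Perov--Kostrub) to get $\mu_p(W)\le-\zeta$ for all $p$ at once. You instead work in polar coordinates and use Riemann sums of the $C^1$ function $\abs{\cos}^p$ to show that the rotational term $b\,g_m'$ is $o(m)$ while $g_m$ is of order $m$.

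Your route is elementary and needs no external results on matrix measures. It also shows transparently why more than $n$ rows are needed when $p\neq2$. However, it is not quantitative. Your $m$ comes from a uniform-convergence rate, governed by the modulus of continuity of $(\abs{\cos}^p)'$, which degenerates as $p\downarrow1$. So your $m$ a priori depends on $p$ and on $\abs{b}/\abs{a}$. The paper's route gives the explicit, $p$-independent condition $\omega\tan(\pi/(2m))<\alpha$. Hence it yields a single $R$ valid for every $p$, as stressed in the remark following the theorem, together with the explicit rate $\beta=p(\zeta-\varepsilon)$.

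Two minor points. With $\theta_k=2\pi k/m$ you need $m\ge3$ for $Q_m$ to have full column rank; this is covered by your ``$m$ large''. Also, the appeal to $\norm{Q_mw}_p\asymp\norm{w}$ is unnecessary: H\"older and Young give directly $\sum_i\inner{\sigpow{Q_mw_i}{p-1}}{Q_mw_{i+1}}\le\norm{\tilde Rz}_p^p$, so $\varepsilon<\abs{a}/2$ suffices independently of $m$.
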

\medskip

\begin{proof}
The proof relies on standard Jordan decomposition techniques; constructive details are given in Appendix~\ref{app:Lyap}. 

\medskip
\noindent
\emph{Direct implication.}
Assume that $A$ is Hurwitz. Consider the system $\dot x = A x$. By \cite[Th. 3.4.1.5]{horn2012matrix}, there exists a nonsingular matrix $T\in\R^{n\times n}$ such that $A = T J T^{-1}$, where $J$ is the real Jordan form
\begin{align}
    J = \text{diag} &\left(C_{n_1}(\alpha_1, \omega_1), \dots, C_{n_q}(\alpha_q, \omega_q), \right. \nonumber\\
    & \; \left. J_{n_{q+1}}(s_{q+1}), \dots, J_{n_{q+\ell}}(s_{q+\ell})\right),
\end{align}
with $C_{n_i}(\alpha_i,\omega_i)$ and $J_{n_{q+j}}(s_{q+j})$ defined in \eqref{eq:block_complex} and \eqref{eq:block_real} for all $i \in \N_q$ and $j \in \N_\ell$, respectively.
The blocks $C_{n_i}(\alpha_i, \omega_i)$ correspond to the pairs of complex conjugate eigenvalues $-\alpha_i \pm \J\, \omega_i$ of $A$ with multiplicity $n_i$, whereas the blocks $J_{n_{q+j}}(s_{q+j})$ correspond to the real eigenvalues $s_{q+j}$ of $A$ with multiplicity $n_{q+j}$. 

Define $z \coloneqq T^{-1}x$, so that $\dot z = T^{-1} \dot x = J z$.
Consistently with the block structure of $J$, partition $z$ as
\begin{equation}
    z = \begin{bmatrix} z_1^\top,\,  \dots\,,\,  z_q^\top,\,  z_{q+1}^\top,\, \dots\,,\,  z_{q+\ell}^\top \end{bmatrix}^\top,
\end{equation}
where $z_i \in \R^{2n_i}$ for $i\in \N_q$ and $z_{q+j}\in \R^{n_{q+j}}$ for $j \in \N_{\ell}$. 
The dynamics then decouple into $q+\ell$ independent subsystems:
\begin{subequations}
\begin{align}
    \dot z_i &= C_{n_i}(\alpha_i, \omega_i) z_i, && \forall i \in \N_q, \label{eq:complex_sub} \\
    \dot z_{q+j} &= J_{n_{q+j}}(s_{q+j}) z_{q+j}, && \forall j \in \N_\ell.
    \label{eq:real_sub}
\end{align}
\end{subequations}

Since $A$ is Hurwitz, we have $\alpha_i > 0$ for all $i \in \N_q$ and $s_{q+j} < 0$ for all $j \in \N_\ell$.
Applying Lemma~\ref{lem:Jordan_Complex_Block} to  \eqref{eq:complex_sub} and Lemma~\ref{lem:Jordan_Real_Block} to \eqref{eq:real_sub}, it follows that, for every $k \in \N_{q+\ell}$, there exist full column rank matrices $R_k$ and positive scalars $\beta_k$ such that $\inner{\nabla V_{R_k,p}(z_k)}{\dot z_k} \leq -\beta_k V_{R_k,p}(z_k)$.
Define $\hat{R} \coloneqq \diag(R_1, \dots, R_{q+\ell})$. 
Applying Lemma~\ref{lemma:Lyap_Aggregation}, it follows that 
$V_{\hat R, p}(z) = \frac{1}{p}\|\hat{R} z\|_p^p$ satisfies 
$\langle\nabla V_{\hat R,p}(z),\,J z\rangle \leq -\beta V_{\hat R, p}(z)$,  $\beta \coloneqq \min_{k\in\N_{q+\ell}} \beta_k > 0$.

Define $R \coloneqq \hat{R} T^{-1}$.
Since $\hat{R}$ has full column rank and $T$ is nonsingular, $R$ also has full column rank.
Moreover, $V_{R,p}(x) = V_{\hat{R}, p}(z)$, and therefore
\begin{equation}
    \inner{\nabla V_{R,p}(x)}{Ax} = \langle{\nabla V_{\hat R,p}(z)},\,{J z}\rangle \leq -\beta V_{R,p}(x). \label{eq:dotVR}
\end{equation}
Let $S \coloneqq \beta^\frac{1}{p} R$. Then, $S$ has full column rank and, by homogeneity of the $p$-norm, 
\begin{equation}
    V_{S,p}(x) = \frac{1}{p} \norm{\beta^{1/p} R x}_p^p = \frac{\beta}{p} \norm{Rx}_p^p = \beta V_{R,p}(x). \label{eq:V_S}
\end{equation}
Substituting \eqref{eq:V_S} into \eqref{eq:dotVR} yields \eqref{eq:p-norm-lyap-cross}.

\medskip
\noindent
\emph{Converse implication.}
Since $R$ and $S$ have full column rank, $V_{R,p}$ and $V_{S,p}$ are positive definite and radially unbounded.
Consider the system $\dot x = A x$.
From \eqref{eq:p-norm-lyap-cross}, 
\begin{equation}
    \tfrac{d}{dt} V_{R,p}(x) =  \inner{\nabla V_{R,p}(x)}{Ax} \leq -V_{S,p}(x), \;\; \forall x\in\R^n.
\end{equation} 
Global asymptotic stability then follows from Lyapunov's direct method.
Since, for linear systems, global asymptotic stability of $\dot x = Ax$ is equivalent to $A$ being Hurwitz \cite[Th.~4.5]{khalil2002nonlinear}, the proof is complete.
\end{proof}

\medskip
\begin{remark}
The proof of Theorem~\ref{th:Lyap} is constructive. In particular, the results in Appendix~\ref{app:Lyap} yield an explicit procedure, based on the Jordan decomposition of $A$, for computing a matrix $R$ that is independent of $p$.
Once $R$ is fixed, the matrix $S = \beta^{1/p} R$ satisfies \eqref{eq:p-norm-lyap-cross} for any $p \in (1, \infty)$.
\end{remark}


\subsection{Hybrid Control Strategy}
\label{subsec:hybrid_control}

Given a Lyapunov function for the average system \eqref{eq:average_sys}, the steepest-decrease switching strategy of \cite{albea2019practical} relies on the property that, for every $x \neq 0$, there exists at least one subsystem for which the directional derivative of that function along the corresponding vector field is strictly negative. 
This property is guaranteed for quadratic Lyapunov functions \cite{albea2019practical}, but may fail for polytopic ($\infty$-norm) Lyapunov functions \cite{cinto2024polytopic}.
The following lemma shows that it also holds for any weighted $p$-norm Lyapunov function for the average system \eqref{eq:average_sys}.

\medskip
\begin{lemma}
\label{lem:min_k}
Suppose Assumption~\ref{ass:1} holds. Let $p \in (1,\infty)$, and let $R\in\R^{r\times n}$ and $S\in\R^{s\times n}$ be full column rank matrices such that \eqref{eq:p-norm-lyap-cross} holds with 
$A = A_{\lambda^*}$.
Then, for all $x \in \R^n \setminus \{0\}$, 
\begin{equation}
    \underset{k\in\N_N}{\min} \inner{\nabla V_{R,p}(x)}{f_k (x)} \leq - V_{S,p}(x).
\end{equation}
\end{lemma}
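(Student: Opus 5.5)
The plan is a convexity (averaging) argument that uses only the linearity of $f_k(x)=A_kx+b_k$ in the mode data. Fix $x\in\R^n\setminus\{0\}$ and the vector $g\coloneqq\nabla V_{R,p}(x)$ given by \eqref{eq:gradient}. The map $k\mapsto\inner{g}{f_k(x)}$ is just a finite list of real numbers indexed by $\N_N$. We compare its minimum with its $\lambda^*$-weighted average.

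First, by \eqref{eq:Alam_blam} and linearity of the inner product in its second argument,
\begin{equation*}
\sum_{k=1}^N \lambda^*_k \inner{g}{f_k(x)} = \inner{g}{A_{\lambda^*}x + b_{\lambda^*}} = \inner{\nabla V_{R,p}(x)}{A_{\lambda^*}x},
\end{equation*}
where Assumption~\ref{ass:1} ($b_{\lambda^*}=0$) removes the affine term. Second, since $\lambda^*\in\Lambda$ has nonnegative entries summing to one, this weighted average is a convex combination of the numbers $\inner{g}{f_k(x)}$. Hence it is bounded below by their minimum:
\begin{equation*}
\min_{k\in\N_N}\inner{g}{f_k(x)} \le \sum_{k=1}^N\lambda^*_k\inner{g}{f_k(x)}.
\end{equation*}
Third, the hypothesis \eqref{eq:p-norm-lyap-cross} with $A=A_{\lambda^*}$ gives $\inner{\nabla V_{R,p}(x)}{A_{\lambda^*}x}\le -V_{S,p}(x)$. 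Chaining these three facts yields the claim.

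There is no genuine obstacle. The only point that needs care is that differentiability of $V_{R,p}$ (valid for finite $p>1$) makes $\nabla V_{R,p}(x)$ a single vector. The inner product is therefore linear in the vector field, which is exactly what fails in the polytopic case of \cite{cinto2024polytopic}, where one would face a set-valued subdifferential. I would also note that, since $S$ has full column rank, the right-hand side $-V_{S,p}(x)$ is strictly negative for $x\neq0$. This gives the strict decrease needed by the steepest-decrease strategy.
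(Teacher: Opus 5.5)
Your proof is correct and follows essentially the same argument as the paper. The paper first rewrites the minimum over modes as a minimum over all convex combinations in $\Lambda$, then bounds it by the $\lambda^*$ combination, uses $b_{\lambda^*}=0$, and applies \eqref{eq:p-norm-lyap-cross}; you go straight to the $\lambda^*$-weighted average, which simply skips that intermediate equality.
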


\medskip
\begin{proof} For any set of scalars $\{z_k\}_{k \in \N_N}$,
\[
\min_{k \in \N_N} z_k
=
\min_{\lambda \in \Lambda} \sum_{k \in \N_N} \lambda_k z_k,
\]
since the minimum of a finite set coincides with the minimum over all its convex combinations. Therefore,
\begin{align*}
&\min_{k \in \N_N} \inner{\nabla V_{R,p}(x)}{f_k(x)}  = \min_{\lambda \in \Lambda} \sum_{k \in \mathbb{N}_N} \! \lambda_k \inner{\nabla V_{R,p}(x)}{f_k(x)} \notag\\
&\quad = \min_{\lambda \in \Lambda} \inner{ \nabla V_{R,p}(x) }{A_\lambda x + b_\lambda}  \leq \inner{\nabla V_{R,p}(x)}{A_{\lambda^*}x + b_{\lambda^*}} \\ &\quad = \inner{\nabla V_{R,p}(x)}{A_{\lambda^*} x}.
\end{align*}
Since \eqref{eq:p-norm-lyap-cross} holds for $A=A_{\lambda^*}$, it follows that $\inner{\nabla V_{R,p}(x)}{A_{\lambda^*}x} \le -V_{S,p}(x)$,
which proves the result.
\end{proof}
\medskip

Since Lemma~\ref{lem:min_k} establishes the required property for weighted $p$-norm Lyapunov functions, the steepest-decrease strategy of \cite{albea2019practical} extends naturally to this class. 
Let $p\in(1,\infty)$, and let $R$ and $S$ be full column rank matrices such that \eqref{eq:p-norm-lyap-cross} holds with $A=A_{\lambda^*}$; under Assumption~\ref{ass:1}, their existence is guaranteed by Theorem~\ref{th:Lyap}.
In particular, $V_{R,p}$ is a Lyapunov function for the average system \eqref{eq:average_sys}.
Adopting the hybrid-systems framework and stability notions of \cite{goebel2012hybrid}, we define the closed-loop hybrid system $\Hy \coloneqq(\setC, f, \setD, G)$ by 
\begin{equation}
\label{eq:hybridsys}    
\Hy: \left\{
\begin{aligned}
\begin{bmatrix}
    \dot x \\ \dot \sigma
\end{bmatrix}
 &= f(x,\sigma), && (x,\sigma) \in \setC, 
 \\[5pt]
\begin{bmatrix}
     x^{+} \\  \sigma^+
\end{bmatrix} &\in G(x,\sigma), && (x,\sigma) \in \setD. \\
\end{aligned}
\right.
\end{equation}
The flow and jump maps are
\begin{subequations}
\label{eq:maps}
\begin{align}
f(x,\sigma) &\!\coloneqq\! \begin{bmatrix}
                f_\sigma (x) \\[1pt] 0
            \end{bmatrix}, \label{eq:maps:f}\\
G(x,\sigma) &\!\coloneqq\! \left\{ \begin{bmatrix}
    x \\ i
\end{bmatrix} : 
    i \in 
   \underset{k \in \N_N}{\argmin} \, \inner{\nabla V_{R,p}(x)}{f_k (x)} 
\right\}, \label{eq:maps:g}
\end{align}
\end{subequations}
and the flow and jump sets are 
\begin{subequations}
\label{eq:sets}
    \begin{align}
\setC\coloneqq\left \{ (x,\sigma): \inner{\nabla V_{R,p}(x)}{f_\sigma (x)} \leq - \eta V_{S,p}(x) \right\}, \label{eq:setC}\\
\setD\coloneqq\left \{ (x,\sigma): \inner{\nabla V_{R,p}(x)}{f_\sigma (x)} \geq - \eta V_{S,p}(x) \right\}, \label{eq:setD}
\end{align}
\end{subequations}
where $\eta \in (0,1)$ is a design parameter. By Lemma~\ref{lem:min_k}, the jump map selects a mode for which the Lyapunov decrease condition is strictly stronger than the flow condition, so that post-jump states belong to the interior of $\setC$ whenever $x \neq 0$.
The resulting hybrid strategy renders the origin UGAS, that is, the set $\{0\}\times\N_N$ is UGAS for $\Hy$.

\medskip
\begin{theorem}
\label{th:UGAS}
Suppose Assumption~\ref{ass:1} holds. Let $p \in (1,\infty)$, and let $R\in\R^{r\times n}$ and $S\in\R^{s\times n}$ be full column rank matrices such that \eqref{eq:p-norm-lyap-cross} holds with $A=A_{\lambda^*}$.
Then the set $\setA \coloneqq  \{0\} \times \N_N$ is UGAS for the hybrid system \eqref{eq:hybridsys}. 
\end{theorem}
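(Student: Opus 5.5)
The plan is to use the Lyapunov theory for hybrid systems in \cite{goebel2012hybrid}, with $W(x,\sigma)\coloneqq V_{R,p}(x)$ as the candidate Lyapunov function relative to $\setA=\{0\}\times\N_N$.

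\textbf{Step 1: basic regularity and well-posedness.}
First I check that $\Hy$ satisfies the hybrid basic conditions.
\begin{itemize}
\item The sets $\setC$ and $\setD$ are closed. The map $(x,\sigma)\mapsto \inner{\nabla V_{R,p}(x)}{f_\sigma(x)}+\eta V_{S,p}(x)$ is continuous in $x$ for each of the finitely many $\sigma$, because $\nabla V_{R,p}=R^\top\sigpow{Rx}{p-1}$ is continuous for $p>1$.
\item The flow map $f$ is continuous.
\item $G$ is outer semicontinuous and locally bounded. It takes values in $\R^n\times\N_N$, and its graph is closed because the argmin of finitely many continuous functions has a closed graph: if $i$ is a minimizer along a sequence $x_j\to x$, then it is a minimizer at $x$ by continuity.
\item $G$ is nonempty on $\setD$, and $\setC\cup\setD=\R^n\times\N_N$.
\end{itemize}

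\textbf{Step 2: bounds and decrease.}
\begin{itemize}
\item \emph{Sandwich bounds.} Full column rank of $R$ and equivalence of norms give $c_1\norm{x}_2^p\le V_{R,p}(x)\le c_2\norm{x}_2^p$. Moreover $\norm{(x,\sigma)}_\setA=\norm{x}_2$.
\item \emph{Flows.} On $\setC$, $\dot W\le-\eta V_{S,p}(x)\le -\eta (c_3/c_2) W$, with $c_3$ the analogous lower constant for $S$.
\item \emph{Jumps.} At a jump $x^+=x$, so $W$ is unchanged. This gives only non-increase, which is the delicate point.
\end{itemize}

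\textbf{Step 3: ruling out chattering (the main obstacle).}
Non-increase at jumps is not enough, so I must show that jumps cannot accumulate without flow. The argument is as follows.
\begin{itemize}
\item By Lemma~\ref{lem:min_k}, any post-jump state with $x\neq0$ satisfies $\inner{\nabla V_{R,p}(x)}{f_{\sigma^+}(x)}\le -V_{S,p}(x)<-\eta V_{S,p}(x)$. It therefore lies in $\setC\setminus\setD$.
\item Consequently, two consecutive jumps without intermediate flow are impossible when $x\neq0$. At $x=0$ the solution is already in $\setA$.
\item Hence the number of jumps is at most the number of flow intervals plus one.
\end{itemize}

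I also need a uniform lower bound on flow duration after each jump, to convert ordinary time decay into hybrid-time decay. Every expression involved is homogeneous: $\inner{\nabla V_{R,p}(x)}{A_kx}$ is of degree $p$, while the $b_k$ term is of degree $p-1$. For that reason I would not rely on such a lower bound directly. Instead I would invoke the standard result \cite[Prop.~3.29/Thm.~3.18]{goebel2012hybrid}, under which strict decrease during flows, non-increase at jumps, and a jump-count bound of the form $j\le t+$const (or persistent flowing) yield UGAS.

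If that route is unavailable, the fallback is as follows. UGS follows from $W$ being non-increasing. For attractivity, each complete solution has flow intervals, interleaved with single jumps, whose total length is unbounded unless the solution is Zeno. Zeno behavior with $x\neq0$ is excluded by the strict gap between $-V_{S,p}$ and $-\eta V_{S,p}$: continuity gives a positive dwell time before the flow condition fails, and this dwell time is uniform on compact sets bounded away from the origin. Then $W\to0$ exponentially in $t$, and a uniform $\KL$ bound in $t+j$ follows from $j\le$ (number of flow intervals)$+1$. Uniformity comes from compactness of sublevel sets of $W$ restricted away from zero.

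Finally, I verify that maximal solutions are complete. This follows from a viability check: on $\setC\setminus\setD$ the flow is linear-affine and cannot escape in finite time, and $G(\setD)\subset\setC$.
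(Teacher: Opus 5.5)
Your proposal is essentially the paper's proof. It uses the same $W=V_{R,p}$, the same appeal to Lemma~\ref{lem:min_k} to place post-jump states in $\setC\setminus\setD$, and a uniform dwell time on compact sets bounded away from the origin. It then closes with a relaxed (persistent-flow) Lyapunov theorem plus completeness via the hybrid basic conditions; the paper uses \cite[Theorem 1]{prieur2014} for that step.

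One correction concerns your primary route. A global bound $j\le t+\mathrm{const}$ does not hold here: near $x=0$ the $b_k$ terms dominate and switching becomes arbitrarily fast. Also, the alternation of single jumps with flow intervals gives no bound on $j$ in terms of $t$. The $\KL$ bound in $t+j$ comes from your fallback's annulus dwell time $\rho(\delta,\Delta)$, which gives $t\ge\rho(\delta,\Delta)(j-1)$ while $\delta\le\norm{x}\le\Delta$. This is exactly the semiglobal practical persistent-flow property the paper verifies.
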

\medskip
\begin{proof}
We establish UGAS by applying \cite[Theorem 1]{prieur2014}.
Let $z \coloneqq (x,\, \sigma) \in \R^n \times \N_N$ denote the hybrid state and define 
$W(z) \coloneqq V_{R,p}(x)$.
The distance from $z$ to the closed set $\setA$ is $\norm{z}_\setA = \norm{x}_2$.
Since $R$ has full column rank, $V_{R,p}$ is positive definite and radially unbounded.
Hence, by equivalence of norms, 
there exist $\alpha_1$, $\alpha_2 \in \K_\infty$ such that $\alpha_1(\norm{x}_2)\leq V_{R,p}(x) \leq \alpha_2(\norm{x}_2), \; \forall x\in\R^n$.
Therefore, $\alpha_1(\norm{z}_\setA)\leq W(z) \leq \alpha_2(\norm{z}_\setA), \; \forall z \in \R^n \times \N_N$. 
Now consider $z \in \setC$.
From \eqref{eq:maps:f} and \eqref{eq:setC}, it follows that
\begin{align}
\inner{\nabla W(z)}{f(z)} = \inner{\nabla V_{R,p}(x)}{f_\sigma (x)} \leq - \eta V_{S,p}(x).
\end{align}
Since $S$ has full column rank, $V_{S,p}$ is also positive definite and 
radially unbounded. Thus, there exists $\alpha_3 \in \K_\infty$ such that
$\inner{\nabla W(z)}{f(z)}  \leq - \alpha_3(\norm{x}_2) = - \alpha_3(\norm{z}_\setA)$.

For $z \in \setD$, during jumps, we have $x^+ = x$, and therefore 
$W(z^+) - W(z) = V_{R,p}(x^+) - V_{R,p}(x) = 0$.
Hence, $W$ does not increase across jumps and decreases along flows outside $\setA$. To conclude UGAS by \cite[Theorem 1]{prieur2014}, it remains to verify the semiglobal practical persistent flow property of $\Hy$ (see \cite[Property 1]{prieur2014}), which excludes Zeno behavior and eventually discrete solutions.

Let $\delta, \Delta>0$, and define
$S_{\delta,\Delta} \coloneqq  \{ (x, \sigma): \delta \leq \|x\| \leq \Delta \}$,
together with the restricted hybrid system
\begin{equation}
\label{eq:Hdelta}
\Hy_{\delta,\Delta}\coloneqq  (\setC_{\delta,\Delta},f,\setD_{\delta,\Delta}, G),
\end{equation}
where $\setC_{\delta,\Delta} \coloneqq  \setC \cap S_{\delta,\Delta}$ and 
$\setD_{\delta,\Delta} \coloneqq  \setD \cap S_{\delta,\Delta}$.
Note that $x\neq 0$ for all $(x,\sigma)\in S_{\delta,\Delta}$.

Consider a jump from $z \in\setD_{\delta,\Delta}$. By the definition of $G$ in \eqref{eq:maps:g} and Lemma~\ref{lem:min_k}, after the jump we have
\begin{equation}
    \inner{\nabla W(z^+)}{f(z^+)} \leq - V_{S,p}(x^+) < -\eta V_{S,p}(x^+),
\end{equation}
where we used $\eta \,{\in} (0,1)$ and $V_{S,p}(x^+) > 0$, since $x^+ = x \neq 0$ for all $x\in \setS_{\delta,\Delta}$. 
Hence, $z^+\in\setC_{\delta,\Delta}\backslash\setD_{\delta,\Delta}$.
Given that $\inner{\nabla V_{R,p}(x)}{f_k(x)}$ is continuous for all $(x, k) \in \setS_{\delta,\Delta} \times \N_N$, the sets $\setC_{\delta,\Delta}$ and $\setD_{\delta,\Delta}$ are closed. 
Therefore, by continuity, any solution to $\Hy_{\delta,\Delta}$ starting from $z^+\in\setC_{\delta,\Delta}\backslash\setD_{\delta,\Delta}$, if it exists, must flow for a positive amount of time before reaching $\setD_{\delta,\Delta}$ again. 
Since $\setC_{\delta,\Delta}$ is compact, continuity of solutions with respect to initial conditions \cite[Proposition 6.14]{goebel2012hybrid} implies the existence of a uniform dwell time $\rho(\delta,\Delta)$ between any two consecutive jumps.
This dwell time satisfies \cite[eq. 4]{prieur2014} with the class $\K_\infty$ function
$\gamma(j) = \rho(\delta,\Delta) j$ and $N = \rho(\delta,\Delta)$. Hence, the assumptions of \cite[Theorem 1]{prieur2014} hold, and $\setA$ is UGpAS.  

To conclude UGAS, it remains to prove completeness of maximal solutions.
The hybrid basic conditions in \cite[Assumption 6.5]{goebel2012hybrid} are satisfied: $\setC$ and $\setD$ are closed, $f$ is continuous and locally bounded, and $G$ is outer semicontinuous\footnote{As defined in \cite[Def. 5.9]{goebel2012hybrid}. See Appendix~\ref{app:g_semicont} for the proof.} 
and locally bounded.
Moreover, for any $z \in \setC\cup\setD$, either $z \in \setD$ or $\inner{\nabla V_{R,p}(x)}{f_\sigma(x)}<-\eta V_{S,p}(x)$, in which case $z$ lies in the interior of $\setC$. 
Hence, any solution starting from $\setC\cup\setD$ can be extended either by jumping or by flowing.
Since trajectories cannot escape to infinity in finite time under affine dynamics, and since $\setC\cup\setD = \R^n \times \N_N$, \cite[Proposition 6.10]{goebel2012hybrid} implies that all maximal solutions to $\Hy$ are complete. Therefore, $\setA$ is UGAS for $\Hy$.
\end{proof}

\begin{remark}
    Combining Theorems~\ref{th:Lyap} and \ref{th:UGAS}, Assumption~\ref{ass:1} is sufficient to guarantee, for any $p \in (1,\infty)$, the existence of a $p$-norm-based switching law for which the set $\setA=\{0\}\times\N_N$ is UGAS. 
\end{remark}
\begin{remark}
    The property established in Lemma~\ref{lem:min_k} is not guaranteed for polytopic Lyapunov functions of the average system \cite{cinto2024polytopic}. Therefore, Theorem~\ref{th:UGAS} does not extend directly to that setting without further assumptions.
\end{remark}


\section{Practical Stabilization under Dwell-Time}
\label{sec:dwell_time}

While the hybrid strategy \eqref{eq:hybridsys} guarantees UGAS of the set $\setA=\{0\}\times\N_N$, it may require arbitrarily fast switching as the continuous state approaches the origin. 
To obtain an implementable controller under physical switching constraints, we relax the objective to practical stability, namely, that solutions converge to and remain within a neighborhood of the origin, whose size decreases as the minimum dwell-time is reduced. 
In this section, we adapt the two regularization techniques introduced in \cite{albea2019practical} to the proposed hybrid controller: space regularization, which inhibits switching when the state is sufficiently close to the origin, and time regularization, which enforces a minimum dwell time through an auxiliary timer state.
In both cases, the practical convergence regions are characterized as sublevel sets of $V_{R,p}$.


\subsection{Space Regularization}
\label{subsec:space_regularization}

Space regularization prevents arbitrarily fast switching by inhibiting jumps when the state is sufficiently close to the origin. 
For a given parameter $\varepsilon > 0$, define the space-regularized hybrid system $\mathcal{H}_\varepsilon \coloneqq (\mathcal{C}_\varepsilon, f, \mathcal{D}_\varepsilon, G)$ by modifying the flow and jump sets in \eqref{eq:sets} as
\begin{subequations}
\begin{align}
    \setC_\varepsilon &\coloneqq \setC\, \cup \{(x,\sigma): V_{R,p}(x) \leq \varepsilon\}, \label{eq:sets_eps:C} \\
    \setD_\varepsilon &\coloneqq \setD \cap \{(x,\sigma): V_{R,p}(x) \geq \varepsilon\}. 
\end{align}
\end{subequations}
This modification forces the system to flow in a neighborhood of the origin.
The following theorem summarizes the properties of this regularization; the proof is given in Appendix~\ref{app:thm:reg}.

\medskip
\begin{theorem}
\label{thm:eps}
Suppose Assumption~\ref{ass:1} holds. Let $p \in (1,\infty)$, and let $R\in\R^{r\times n}$ and  $S\in\R^{s\times n}$ be full column rank matrices such that \eqref{eq:p-norm-lyap-cross} holds  with $A=A_{\lambda^*}$. Then, for any $\varepsilon > 0$, the following statements hold:
\begin{enumerate}[a)]
    \item there exists a constant $T(\varepsilon) > 0$ 
    such that all solutions $\phi$ to $\Hy_\varepsilon$ satisfy $t\geq (j-1)T(\varepsilon)$ for all $(t,j)\in\dom\phi$,  \label{thm:eps:dwell}
    \item the set $\setA_\varepsilon \coloneqq \{ (x,\sigma) \in \R^n \times \N_N : V_{R,p}(x) \leq \varepsilon \}$ is UGAS for $\mathcal{H}_\varepsilon$. \label{thm:eps:UGAS}
\end{enumerate}
\end{theorem}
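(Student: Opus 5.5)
For part a), I will obtain a uniform lower bound on the flow time between consecutive jumps by restricting attention to the region where jumps can occur. Every jump starts in $\setD_\varepsilon$, where $V_{R,p}(x)\geq\varepsilon$. Along flows, $V_{R,p}$ is nonincreasing, since on $\setC$ the decrease condition holds. On $\setC_\varepsilon\setminus\setC$ we have $V_{R,p}\le\varepsilon$, and there flow occurs only inside the sublevel set. So after the first jump the state stays in the compact set $K\coloneqq\{x: \varepsilon\le V_{R,p}(x)\le V_{R,p}(x(0))\}$ whenever a jump is possible. Unfortunately this bound depends on the initial condition, while the claimed $T(\varepsilon)$ must be uniform.

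To get uniformity, I will rescale the decrease margin. After a jump from $x$ with $V_{R,p}(x)\ge\varepsilon$, Lemma~\ref{lem:min_k} gives
\[
\inner{\nabla V_{R,p}(x)}{f_{\sigma^+}(x)}\le -V_{S,p}(x),
\]
whereas the next jump requires the derivative to reach $-\eta V_{S,p}$. I will therefore study the continuous function $\psi_\sigma(x)\coloneqq \inner{\nabla V_{R,p}(x)}{f_\sigma(x)}+\eta V_{S,p}(x)$, which must go from at most $-(1-\eta)V_{S,p}(x)$ to $0$. Its time derivative along the flow of mode $\sigma$ has the form $\inner{\nabla\psi_\sigma(x)}{A_\sigma x+b_\sigma}$. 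The gradient of $\inner{R^\top\sigpow{Rx}{p-1}}{A_\sigma x+b_\sigma}$ involves $\abs{Rx}^{p-2}$, which is singular at $0$ when $p<2$. However, $0$ is excluded since $V_{R,p}\ge\varepsilon$ near a jump, and where $R_{i:}x=0$ the function is $C^1$ for $p>1$ after checking that the product terms remain bounded. I expect this to be delicate for $p\in(1,2)$; there I will instead argue via a bound on the Hölder modulus of $\psi_\sigma$.

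The key observation for uniformity is homogeneity. The linear part of $\psi_\sigma$ is $p$-homogeneous, and the affine term $\inner{\nabla V}{b_\sigma}$ is $(p-1)$-homogeneous. For large $\norm{x}$, the ratio $\abs{\dot\psi_\sigma}/V_{S,p}(x)$ is therefore bounded by a constant plus a term vanishing as $\norm{x}\to\infty$. On $\{V_{R,p}\ge\varepsilon\}$ this ratio is bounded by some $L(\varepsilon)$, since the set is closed, the ratio is continuous, and the ratio is bounded at infinity. Combining this with $\dot V_{S,p}\le c\,V_{S,p}$ (again by homogeneity plus a lower bound), a Grönwall-type comparison shows that $\psi_\sigma/V_{S,p}$ takes at least a time $T(\varepsilon)>0$ to rise from $-(1-\eta)$ to $0$. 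Each inter-jump interval after the first therefore has length at least $T(\varepsilon)$, which gives $t\ge (j-1)T(\varepsilon)$. This uniform-ratio bound is the main obstacle. In the case $p<2$, I would first try working with the normalized quantity directly, using the continuity of $\psi_\sigma/V_{S,p}$ composed with the flow on the projective-like compact set $\{V_{R,p}=1\}$ together with scaling.

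For part b), I will use $W(x,\sigma)=\max\{V_{R,p}(x)-\varepsilon,0\}$, which bounds the distance to $\setA_\varepsilon$ by $\K_\infty$ functions. On $\setC\setminus\setA_\varepsilon$ we have $\dot W\le-\eta V_{S,p}(x)\le -\alpha_3(\norm{x}_{\setA_\varepsilon})$. Jumps leave $W$ unchanged. Flows in $\setC_\varepsilon\setminus\setC$ occur only inside $\setA_\varepsilon$, so $\setA_\varepsilon$ is forward invariant because $V_{R,p}$ cannot increase past $\varepsilon$. The hybrid basic conditions follow as in Theorem~\ref{th:UGAS}. Part a) rules out Zeno and eventually-discrete solutions, and completeness holds by the same viability argument. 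Applying \cite[Theorem 1]{prieur2014}, or the standard hybrid Lyapunov theorem with persistent flow supplied by a), then yields UGAS of $\setA_\varepsilon$.
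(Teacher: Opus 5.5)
Your part b) is in substance the paper's argument: the same $W=\max\{V_{R,p}-\varepsilon,0\}$, the same solution-based forward invariance of $\setA_\varepsilon$, and the dwell time from a) used to turn decay in $t$ into a $\KL$ bound in $t+j$. Draw the conclusion directly from that trajectory estimate, as the paper does, rather than from a pointwise Lyapunov theorem. At points of $\setC_\varepsilon\setminus\setC$ with $V_{R,p}(x)=\varepsilon$, the directional derivative of the nonsmooth $W$ along $f$ can be positive. No solution flows from such points, but a pointwise flow condition on all of $\setC_\varepsilon$ fails there.

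The genuine gap is in part a) for $p\in(1,2)$. The nonsmoothness of $\psi_\sigma$ is not at the origin. It lies on each hyperplane $\{x:R_{i:}x=0\}$, which for $n\ge2$ meets $\{V_{R,p}\ge\varepsilon\}$ at points of arbitrarily large norm. So restricting to $V_{R,p}\ge\varepsilon$ removes nothing, and $\psi_\sigma$ is \emph{not} $C^1$ there. Along the flow of mode $\sigma$, $\tfrac{d}{dt}\psi_\sigma(x)$ contains the term $(p-1)\abs{R_{i:}x}^{p-2}\bigl(R_{i:}f_\sigma(x)\bigr)^2$. This tends to $+\infty$ whenever the trajectory crosses $R_{i:}x=0$ transversally; for $R=I_2$ in the paper's example, trajectories rotate across the coordinate axes. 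Consequences:
\begin{itemize}
\item no constant $L(\varepsilon)$ bounds $\abs{\dot\psi_\sigma}/V_{S,p}$ on $\{V_{R,p}\ge\varepsilon\}$;
\item the homogeneity argument at infinity fails too, since the blow-up occurs at every scale;
\item the Gr\"onwall comparison meant to yield $T(\varepsilon)$ has nothing to rest on.
\end{itemize}
The increment of $\psi_\sigma$ over a short time $t$ is still small, but only of order $t^{p-1}$. You name two fallbacks for this case without carrying either out, and carrying one out is the actual content of the proof.

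The paper supplies the missing modulus-of-continuity estimate. From $\abs{\sigpow{s_1}{p-1}-\sigpow{s_2}{p-1}}\le 2^{2-p}\abs{s_1-s_2}^{p-1}$ (Lemma~\ref{lem:holder_grad}, H\"older exponent $\min\{1,p-1\}$) and the flow bound of Lemma~\ref{lem:flow_bound}, it derives the following for any mode-$\sigma$ trajectory $\xi$ with $\xi(0)=x$ (Lemma~\ref{lem:delta_dotV}):
$\inner{\nabla V_{R,p}(\xi(t))}{f_\sigma(\xi(t))}-\inner{\nabla V_{R,p}(x)}{f_\sigma(x)}\le L_{R,p}(t)\,(V_{R,p}(x)+1)$, with $L_{R,p}\in\K_\infty$ independent of $x$.
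It combines this with $V_{S,p}(\xi(t))\le\bigl((1+t)^{p-1}+K_S(t)\bigr)V_{S,p}(x)+\alpha_S(t)$ and $V_{S,p}\ge c_V V_{R,p}$. The result is $\psi_\sigma(\xi(t))\le\bigl(L_{R,p}(t)-\vartheta(t)c_V\bigr)V_{R,p}(x)+\alpha(t)$, with $\vartheta(0)=1-\eta$ and $\alpha\in\K_\infty$. Its worst case over $V_{R,p}(x)\ge\varepsilon$ is $V_{R,p}(x)=\varepsilon$, which gives a uniform $T(\varepsilon)$.

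Your compactness idea can also be made rigorous for every $p\in(1,\infty)$, using only continuity of $\nabla V_{R,p}$:
\begin{itemize}
\item set $\lambda\coloneqq V_{R,p}(x)^{1/p}\ge\varepsilon^{1/p}$ and $x=\lambda y$;
\item the mode-$\sigma$ flow becomes $\dot y=A_\sigma y+\lambda^{-1}b_\sigma$, and $\psi_\sigma(x)=\lambda^p h_\sigma(y,\lambda^{-1})$ with $h_\sigma(y,\beta)\coloneqq\inner{\nabla V_{R,p}(y)}{A_\sigma y+\beta b_\sigma}+\eta V_{S,p}(y)$;
\item right after a jump, $h_\sigma(y_0,\beta)\le-(1-\eta)c_V$;
\item with $y(t)$ solving $\dot y=A_\sigma y+\beta b_\sigma$, $y(0)=y_0$, the map $(y_0,\beta,t)\mapsto h_\sigma(y(t),\beta)$ is uniformly continuous on the compact set $\{V_{R,p}(y_0)=1\}\times[0,\varepsilon^{-1/p}]\times[0,1]$, which gives $T(\varepsilon)$.
\end{itemize}
Because of the affine term, the parameter $\beta=\lambda^{-1}$ is indispensable; scaling onto $\{V_{R,p}=1\}$ alone does not map flows to flows.

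Either route also closes a smaller hole in your $p\ge2$ argument. Your ratio bound holds only on $\{V_{R,p}\ge\varepsilon\}$. After a jump from a point with $V_{R,p}(x)=\varepsilon$, the state immediately enters $\{V_{R,p}<\varepsilon\}$. There $\psi_\sigma$ might reach $0$, and the state might then flow back into $\setD_\varepsilon$. You need control of $\psi_\sigma$ along the whole post-jump flow, not only while $V_{R,p}\ge\varepsilon$.
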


\medskip
\begin{remark}
    Item \ref{thm:eps:dwell}) guarantees a minimum dwell-time $T(\varepsilon)$ between consecutive switches, while still allowing an initial jump at some $t\in[0,T(\varepsilon))$.
\end{remark}
\begin{remark}
    Since $\setC_{\varepsilon_1} \subseteq \setC_{\varepsilon_2}$ for any $\varepsilon_1 \leq \varepsilon_2$ (see~\eqref{eq:sets_eps:C}), increasing $\varepsilon$ enlarges the flow set and, consequently, increases the minimum dwell-time $T(\varepsilon)$, at the expense of enlarging the convergence set $\setA_\varepsilon$.
\end{remark}


\subsection{Time Regularization}
\label{subsec:time_regularization}

An alternative way to prevent arbitrarily fast switching is to enforce a minimum dwell time explicitly between consecutive jumps.
To this end, augment the state with an auxiliary timer $\tau \in \R_{\geq 0}$ and define 
$r\!\left(\frac{\tau}{T}\right) \coloneqq \min\left\{1,\,2-\frac{\tau}{T}\right\}$.
The time-regularized hybrid system $\Hy_T$ is then given by
\begin{equation}
\label{eq:maps_time}
\Hy_T:  \left\{
\begin{aligned}
    &\begin{aligned}
        \begin{bmatrix}
            \dot x \\ \dot \sigma 
        \end{bmatrix}
         &= f(x, \sigma), \\
         \dot \tau &= r\!\left(\frac{\tau}{T}\right),
    \end{aligned} 
    && (x,\sigma,\tau) \in \setC_T \\[10pt]
    &\begin{aligned}
        \begin{bmatrix}
             x^{+} \\  \sigma^+
        \end{bmatrix} &\in G(x,\sigma), \\
             \tau^+ &= 0.
    \end{aligned}
    && (x,\sigma,\tau) \in \setD_T
    \end{aligned}
\right.
\end{equation}
The function $r$ satisfies
$ r\!\left(\frac{\tau}{T}\right)=1 \; \text{for } \tau \in [0,T],\;
r\!\left(\frac{\tau}{T}\right)=2-\frac{\tau}{T} \; \text{for } \tau \in [T,2T]$.
Hence, $\tau$ increases at unit rate until it reaches $T$, and then converges asymptotically to $2T$, thereby remaining in the compact interval $[0,2T]$.
The flow and jump sets are redefined as
\begin{subequations}
\label{eq:sets_time}
\begin{align}
    \setC_T &\coloneqq \left( \setC \times [0, 2T] \right) \cup \left( \R^n \times \N_N \times [0, T] \right), \label{eq:setC_T} \\
    \setD_T &\coloneqq \setD \times [T, 2T]. \label{eq:setD_T}
\end{align}
\end{subequations}
Thus, after each jump, the system is forced to flow for at least $T$ time units, regardless of the Lyapunov derivative condition. 
Once $\tau \geq T$, the original flow and jump conditions are recovered, so that the system continues to flow according to $\setC$ and may jump again through $\setD_T$.
Although this forced flow may produce a transient increase in the  Lyapunov function, the affine dynamics ensure that such an increase remains bounded over any interval of length $T$. 
Moreover, after each jump the subsystem selected by $G$ ensures a strictly negative Lyapunov derivative at the jump instant; hence, by continuity, $T$ can be chosen small enough so that the Lyapunov derivative remains negative over the entire forced-flow interval following every jump. Consequently, the only phase in which $V_{R,p}$ may grow is the initial forced-flow interval preceding the first jump.
The following theorem summarizes the resulting properties of $\Hy_T$; the proof is given in Appendix~\ref{app:thm:reg}.
\medskip
\begin{theorem}
\label{thm:time_reg}
Assume Assumption~\ref{ass:1} holds. Let $p \in (1,\infty)$, and let $R\in\R^{r\times n}$ and $S\in\R^{s\times n}$ be full column rank matrices such that \eqref{eq:p-norm-lyap-cross} holds with $A=A_{\lambda^*}$. Then there exist $T_M >0$, a strictly increasing function $\varepsilon\!:\!(0,T_M)\! \to\!\R_{>0}$ satisfying $\lim_{T\to0^+}\!\varepsilon(T)=0$, a function $\beta\in\KL$, and a function $\alpha\in\K_\infty$ such that for every $T \in (0,T_M )$ and every solution $\phi$ to $\Hy_T$ with initial condition $\phi(0,0)=(x_0,\sigma_0,\tau_0)$, the following statements hold for all $(t,j)\in\dom\phi$:
\begin{enumerate}[a)]
    \item \vspace{0.1cm}
        $\norm{\phi(t,j)}_{\setA_{T}} \leq \beta\!\left(\norm{\phi(0,0)}_{\setA_{T}} \! +  \alpha(\Delta_0 ),\, t+j\right)$, \label{thm:time_reg:UGAS}
        \vspace{0.1cm}
    \item $t\geq (j-1)T$, \label{thm:time_reg:dwell}
\end{enumerate}
where $\setA_{T} {\coloneqq} \{ (x,\sigma,\tau) {\in} \R^n {\times} \N_N {\times} [0,2T] : V_{R,p}(x) \leq \varepsilon(T) \}$, and $\Delta_0 {\coloneqq} \max\{0,\, T {-} \, \tau_0\}$.
\end{theorem}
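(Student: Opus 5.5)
Item~\ref{thm:time_reg:dwell}) is immediate, so I will dispose of it first. Jumps reset $\tau^+=0$ and are only allowed from $\setD_T\subseteq \R^n\times\N_N\times[T,2T]$. Between two consecutive jumps $\tau$ grows at unit rate on $[0,T]$, so at least $T$ units of flow separate them. Only the first jump may occur earlier, if $\tau_0\ge T$. Counting intervals then gives $t\ge (j-1)T$.

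For item~\ref{thm:time_reg:UGAS}) I will use $V\coloneqq V_{R,p}$ and split each solution into two phases: the initial forced-flow interval of length at most $\Delta_0$, and the subsequent regime.

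\emph{Phase 1.} On $[0,\Delta_0]$ the system flows under a fixed mode $\sigma_0$, which need not decrease $V$. Because the dynamics are affine, $\|x(t)\|_2\le e^{a\Delta_0}(\|x_0\|_2+\Delta_0 b)$ with $a\coloneqq\max_k\|A_k\|_2$ and $b\coloneqq\max_k\|b_k\|_2$. This bound yields the $\alpha(\Delta_0)$ correction. Since $\Delta_0\le T<T_M$, the exponential factor is uniformly bounded, so the effect is an additive term in $\Delta_0$ plus a $\K_\infty$ bound in $\|x_0\|$. Outside $\setA_T$, $\|x\|$ is comparable to $\|\phi\|_{\setA_T}$ up to the level $\varepsilon(T)$, so this can be absorbed into $\beta(\|\phi(0,0)\|_{\setA_T}+\alpha(\Delta_0),\cdot)$.

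\emph{Phase 2.} I will prove a uniform decrease estimate. By Lemma~\ref{lem:min_k}, immediately after a jump at $x\neq0$ the selected mode $i$ satisfies $\inner{\nabla V(x)}{f_i(x)}\le -V_{S,p}(x)$. By homogeneity, $\nabla V$ is $(p-1)$-homogeneous and the $A_i x$ part is degree $p$, while the $b_i$ part is only of degree $p-1$. Along the forced flow $y(s)=e^{A_is}x+\int_0^s e^{A_i(s-\theta)}b_i\,d\theta$ I will estimate
\[
\inner{\nabla V(y(s))}{f_i(y(s))}-\inner{\nabla V(x)}{f_i(x)} .
\]
Using local Lipschitz/H\"older continuity of $\nabla V$ (exponent $\min\{1,p-1\}$), this difference is bounded by $c(s)(\|x\|^{p}+\|x\|^{p-1}+\dots)$ with $c(s)\to0$ as $s\to0$. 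For $\|x\|$ outside a ball of radius $\rho(T)$, the term $-V_{S,p}(x)\ge -c_0\|x\|^p$ dominates, and $\rho(T)\to0$ as $T\to0$ (balance $c_0\rho^p$ against $c(T)\rho^{p-1}$ plus constants times $T$). I will choose $T_M$ so that this works, and define $\varepsilon(T)$ as a strictly increasing majorant of $\max\{V(x):\|x\|\le\rho'(T)\}$. Here $\rho'$ also accounts for the possible growth of $V$ during a forced interval started inside the small ball, which is $O(T)$ by the affine bound. Consequently, outside $\{V\le\varepsilon(T)\}$, $V$ decreases at rate at least $\eta' V_{S,p}$ during forced flows, at rate $\eta V_{S,p}$ during free flows in $\setC$, and is unchanged at jumps. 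The sublevel set $\{V\le\varepsilon(T)\}$ is forward invariant, by the same estimate applied on its boundary.

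Finally, I will convert this into a $\KL$ bound in $t+j$. Item~\ref{thm:time_reg:dwell}) gives $j\le t/T+1$, so decay in $t$ transfers to decay in $t+j$, as in \cite{prieur2014}. Completeness follows as in Theorem~\ref{th:UGAS}: the basic conditions hold, $\tau$ stays in $[0,2T]$, and there is no finite escape. I expect the main obstacle to be the uniform forced-flow estimate. It must hold uniformly in $x$ outside a small sublevel set despite the degree mismatch from $b_i$ and the merely H\"older gradient when $p<2$, and it must produce an $\varepsilon(T)$ that is strictly increasing with limit $0$. I would first try to handle it by working on the compact shell $\{\rho\le\|x\|\le\Delta\}$ by continuity and using homogeneity to scale the estimate to large $\|x\|$.
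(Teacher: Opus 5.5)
Your item~b) and your Phase~2 follow the paper's proof. The ingredients are the same: Lemma~\ref{lem:min_k} at the jump instant, a derivative-variation bound uniform in $x$, the $\setC$ condition on regular flows, and $t\ge (j-1)T$ to pass to $t+j$. The variation bound is exactly the paper's Lemma~\ref{lem:delta_dotV}, $\dot V(s)-\dot V(0)\le K_{R,p}(s)\,(V_{R,p}(x)+1)$, obtained from the H\"older bound on $\nabla V_{R,p}$ and Lemma~\ref{lem:flow_bound}, so no compactness/homogeneity argument is needed. (Domination also requires $V_{S,p}(x)\ge c_0\norm{x}^p$, not the inequality you wrote.)

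The paper packages this more tightly than your $\rho(T)$, $\rho'(T)$, majorant construction. The identity $-c_V V+K_{R,p}(T)(V+1)=-\gamma(T)\,(V-\varepsilon(T))$, with $\gamma(T)\coloneqq c_V-K_{R,p}(T)$, defines $\varepsilon(T)=K_{R,p}(T)/\gamma(T)$ directly. Integrating over a forced interval then gives $V_\varepsilon(x(t_i+s))\le(1-s\gamma(T))\,V_\varepsilon(x(t_i))$ for $V_\varepsilon\coloneqq V_{R,p}-\varepsilon(T)$. This shows both decrease outside the set and that forced flows started inside stay inside.

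The genuine gap is in Phase~1. You bound $\norm{x(t)}_2$, replace $e^{a\Delta_0}$ by a uniform constant, and obtain a $\K_\infty$ bound in $\norm{x_0}$. That cannot yield the claimed estimate. Take $x_0\in\setA_T$, $x_0\neq 0$, and $\tau_0=T-\Delta_0$ with $\Delta_0>0$ small. The statement then forces $\norm{\phi(\Delta_0,0)}_{\setA_T}\le\beta(\alpha(\Delta_0),0)$, which tends to $0$ as $\Delta_0\to0$. Your bound $c\norm{x_0}_2+O(\Delta_0)$ with fixed $c>1$ leaves an error of order $\norm{x_0}$ that does not vanish. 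Moreover, a bound on the Euclidean norm does not control the distance to a sublevel set of $V_{R,p}$ at all. Saying that $\norm{x}$ is "comparable up to the level $\varepsilon(T)$" leaves an offset independent of $\Delta_0$, so it cannot be absorbed into $\beta(\norm{\phi(0,0)}_{\setA_T}+\alpha(\Delta_0),\cdot)$.

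What is needed is a bound on the \emph{displacement}, $\norm{x(\Delta_0)-x_0}_p\le(c_A\norm{x_0}_p+c_b)\,\Delta_0e^{c_A\Delta_0}$ from Lemma~\ref{lem:flow_bound}. Combine it with the triangle inequality for $\norm{R\,\cdot}_p$ and Lemma~\ref{lem:convex_p_ineq} with $s=\Delta_0$. This gives $V_{R,p}(x(\Delta_0))\le\hat\gamma(\Delta_0)V_{R,p}(x_0)+\alpha_0(\Delta_0)$ with $\hat\gamma(\Delta_0)\to1$ and $\alpha_0\in\K_\infty$. The residual $\varepsilon(T)(\hat\gamma(\Delta_0)-1)$ then vanishes with $\Delta_0$ and can be folded into the $\alpha(\Delta_0)$ term, yielding $W(x(\Delta_0))\le\hat\gamma(T)W(x_0)+\hat\alpha(\Delta_0)$ for $W=\max\{0,V_\varepsilon\}$. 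Alternatively, bound the distance of $x(\Delta_0)$ to the sublevel set by that of $x_0$ plus this displacement. Either way, it is the factor tending to one on the part of $V_{R,p}$ already inside the target set that produces an additive correction of the stated form.
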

\medskip

\begin{remark}
When $\tau_0 < T$, the system is forced to flow for up to $\Delta_0$ time units under the initial subsystem $\sigma_0$, which may increase $V_{R,p}$ and cause trajectories starting in $\setA_{T}$ to leave this set temporarily.
Consequently, $\setA_{T}$ is not UGAS for $\Hy_T$. 
The term $\alpha(\Delta_0)$ in item~\ref{thm:time_reg:UGAS}) accounts for 
this overshoot, which is a consequence of the time regularization rather than an artifact of the proof.
Since $\alpha(0)=0$, choosing $\tau_0 \geq T$ recovers a standard $\mathcal{KL}$ bound.
\end{remark}


\section{Constructing the \texorpdfstring{$p$}{p}-Norm Lyapunov Function}
\label{sec:construction}

The results in Section~\ref{sec:dwell_time} guarantee practical UGAS under dwell-time constraints for any valid $p$-norm Lyapunov function $V_{R,p}$ for the average system, with convergence regions given by sublevel sets of $V_{R,p}$.
In practice, operating regions for switched affine systems are often defined by linear state bounds \cite{blanchini2015settheoretic}.
Given a matrix $R$ such that the sublevel sets of $\Psi_{R,\infty}(x)\coloneqq\norm{Rx}_\infty$ describe the desired operating region, if $\Psi_{R,\infty}$ is a Lyapunov function for the average system \eqref{eq:average_sys}, we show how to obtain a corresponding weighted $p$-norm Lyapunov function $V_{R,p}$ for sufficiently large $p$.

\subsection{A Similarity Condition and Matrix Measures}
The proposed method relies on a sufficient condition that links the dynamics matrix $A$ to the Lyapunov matrix $R$ through an auxiliary matrix $\Gamma$. The matrix measure $\mu_p(\Gamma)$ then serves as a certificate ensuring that $V_{R,p}$ satisfies \eqref{eq:p-norm-lyap-cross}. 

\medskip
\begin{lemma}
\label{lem:Gamma}
Let $A\in\R^{n\times n}$ and let $R\in\R^{r\times n}$ be full column rank.
If there exist $\Gamma \in \R^{r \times r}$ and $p \in (1,\infty)$ such that
\begin{equation}
 \Gamma R = R A, \label{eq:GRRA}
\end{equation}
and $\mu_p(\Gamma) < 0$,
then \eqref{eq:p-norm-lyap-cross} holds with
\begin{equation}
   S \coloneqq \left(p\abs{\mu_p(\Gamma)}\right)^\frac{1}{p} R.
\end{equation}
\end{lemma}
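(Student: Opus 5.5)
The plan is to reduce the claim to a one-sided estimate on $\tfrac{1}{p}\norm{y}_p^p$ along $\dot y = \Gamma y$, and then pull it back through $R$. Set $y \coloneqq Rx \in \R^r$. From the gradient formula \eqref{eq:gradient} and the similarity condition \eqref{eq:GRRA},
\begin{equation*}
\inner{\nabla V_{R,p}(x)}{Ax} = \inner{\sigpow{Rx}{p-1}}{RAx} = \inner{\sigpow{y}{p-1}}{\Gamma y}.
\end{equation*}
It therefore suffices to prove, for every $y\in\R^r$,
\begin{equation*}
\inner{\sigpow{y}{p-1}}{\Gamma y} \le \mu_p(\Gamma)\,\norm{y}_p^p .
\end{equation*}
Once this holds, $\mu_p(\Gamma)<0$ gives
\begin{equation*}
\inner{\nabla V_{R,p}(x)}{Ax} \le -\abs{\mu_p(\Gamma)}\,\norm{Rx}_p^p = -V_{S,p}(x),
\end{equation*}
because $V_{S,p}(x)=\tfrac{1}{p}\,p\abs{\mu_p(\Gamma)}\,\norm{Rx}_p^p$ by homogeneity, exactly as in \eqref{eq:V_S}. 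The matrix $S$ is a positive multiple of $R$, so it inherits full column rank, and \eqref{eq:p-norm-lyap-cross} follows.

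The key inequality is the main step. I would derive it from the definition of $\mu_p$ using the one-sided derivative of the norm. Fix $y\neq0$; the case $y=0$ is trivial. For small $h>0$,
\begin{equation*}
\norm{y+h\Gamma y}_p \le \norm{I+h\Gamma}_p\,\norm{y}_p ,
\end{equation*}
so
\begin{equation*}
\lim_{h\to0^+}\frac{\norm{y+h\Gamma y}_p-\norm{y}_p}{h} \le \mu_p(\Gamma)\,\norm{y}_p .
\end{equation*}
For $p\in(1,\infty)$ the function $\norm{\cdot}_p$ is differentiable at every $y\neq0$, with gradient $\sigpow{y}{p-1}/\norm{y}_p^{p-1}$. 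This follows from differentiating $\norm{y}_p^p$, whose gradient is $p\sigpow{y}{p-1}$, and applying the chain rule. Hence the left-hand limit equals $\inner{\sigpow{y}{p-1}}{\Gamma y}/\norm{y}_p^{p-1}$. Multiplying by $\norm{y}_p^{p-1}>0$ gives the inequality.

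The only delicate point is justifying that the limit defining $\mu_p$ exists and that the directional-derivative argument is legitimate. Existence of the limit follows from convexity of $h\mapsto\norm{I+hM}_p$, and the paper takes $\mu_p$ as given anyway. The differentiability of the $p$-norm away from the origin is what uses $p>1$. Note that $y$ ranges only over $\operatorname{range}(R)$, but the estimate holds for all $y\in\R^r$, so $R$ having $r>n$ rows causes no difficulty.
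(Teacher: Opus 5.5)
Your proposal is correct and follows essentially the same route as the paper: with $y=Rx$ you rewrite $\inner{\nabla V_{R,p}(x)}{Ax}$ as $\inner{\sigpow{y}{p-1}}{\Gamma y}$, identify this with $\norm{y}_p^{p-1}$ times the directional derivative of $\norm{\cdot}_p$ at $y\neq0$, and bound that derivative by $\mu_p(\Gamma)\norm{y}_p$ using $\norm{y+h\Gamma y}_p\le\norm{I+h\Gamma}_p\norm{y}_p$. Your explicit remark that $S$ inherits full column rank from $R$ is a small addition that the paper leaves implicit.
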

\medskip

\begin{proof}
For $x=0$, both sides of \eqref{eq:p-norm-lyap-cross} vanish, so the result holds trivially.
Let $x\neq 0$ and define $y\coloneqq Rx$. Since $R$ has full column rank, we have $y\neq 0$.
Using the expression for $\nabla V_{R,p}$ and \eqref{eq:GRRA}, we obtain
\begin{align}
    \inner{\nabla V_{R,p}(x)}{Ax} \!&= \inner{R^\top \sigpow{Rx}{p-1}}{Ax} \!=\! \inner{\sigpow{Rx}{p-1}}{RAx} \nonumber\\
    &=\inner{\sigpow{Rx}{p-1}\!}{\Gamma Rx} = \inner{\sigpow{y}{p-1}\!}{\Gamma y}. \label{eq:inner_eq}
\end{align}
For $p \in (1,\infty)$, the $p$-norm is continuously differentiable on $\R^r\setminus\{0\}$, with gradient $\nabla \norm{y}_p = \norm{y}_p^{1-p}\,\sigpow{y}{p-1}$.
Substituting this into \eqref{eq:inner_eq} yields
\begin{equation}
    \inner{\nabla V_{R,p}(x)}{Ax} = \norm{y}_p^{p-1} \langle{\nabla \norm{y}_p},\,{\Gamma y}\rangle. \label{eq:inner_norm}
\end{equation}
Since $\norm{\boldsymbol{\cdot}}_p$ is differentiable at $y$, its directional derivative in the direction $\Gamma y$ satisfies
\begin{align}
    &\langle{\nabla \norm{y}_p},\,{\Gamma y}\rangle = \lim_{h\to0^+} \frac{\norm{y+h \, \Gamma y}_p - \norm{y}_p}{h} \nonumber\\
    &\qquad\leq \lim_{h\to0^+} \frac{\norm{I+h \Gamma}_p - 1}{h} \norm{y}_p = \mu_p(\Gamma) \norm{y}_p, \label{eq:dir_deriv}
\end{align}
where we used the submultiplicativity of the induced $p$-norm and the definition of $\mu_p$.

Using \eqref{eq:dir_deriv} in \eqref{eq:inner_norm}, and recalling that $\mu_p(\Gamma)<0$, we obtain
\begin{align} 
&\inner{\nabla V_{R,p}(x)}{Ax} \leq \mu_p(\Gamma) \norm{y}_p^p = \mu_p(\Gamma) \norm{Rx}_p^p \nonumber\\
&\qquad \qquad = - \frac{1}{p} \norm{\left(p\abs{\mu_p(\Gamma)}\right)^\frac{1}{p} Rx}_p^p = - V_{S,p}(x), 
\end{align}
which proves the result.
\end{proof}
\medskip

\begin{remark}
The conditions of Lemma~\ref{lem:Gamma} are sufficient, but not necessary in general, for $V_{R,p}$ to satisfy \eqref{eq:p-norm-lyap-cross}. 
We refer the reader to~\cite{loskot1998further} for a characterization 
of the cases in which necessity also holds.
\end{remark}
\medskip

Given that $R$ has full column rank, any matrix $\Gamma$ satisfying \eqref{eq:GRRA} can be parameterized as $\Gamma = RAR^\dag + Z(I-RR^\dag)$, 
where $Z \in \mathbb{R}^{r \times r}$ is a free parameter matrix.
Since the matrix measure is convex \cite{desoer1972measure}, a sufficient certificate for $V_{R,p}$ to be a Lyapunov function can be obtained by solving the convex optimization problem
\begin{equation*}
    \min_{Z \in \R^{r \times r}} \mu_p\left(RAR^\dag + Z(I-RR^\dag)\right).
\end{equation*}
However, unlike the cases $p \in \{1, 2, \infty\}$, the matrix measure $\mu_p(\cdot)$ does not admit a closed-form expression for a general $p \in (1, \infty)$. 
Its evaluation requires solving a maximization problem over the $p$-norm unit sphere, which is generally not tractable in closed form.
This intractability motivates the two-step approach developed in the following subsections: first, certify $R$ and $\Gamma$ using the tractable $\mu_\infty$ condition; then, infer stability for all sufficiently large $p$.
This approach bypasses the direct computation of $\mu_p$.
 
\subsection{Step 1: Certifying a Polytopic Lyapunov Function}

Assume that $R$ has been chosen so that the sublevel sets of $\Psi_{R,\infty}(x) = \|Rx\|_\infty$ 
characterize the desired target region. It remains to verify that $\Psi_{R,\infty}$ is a Lyapunov 
function for the average system. 
Unlike $\mu_p$ for general $p\in(1,\infty)$, the matrix measure $\mu_\infty$ admits a closed-form expression~\cite{desoer1972measure}. Therefore, a sufficient certificate for $\Psi_{R,\infty}$ to be a Lyapunov function can be obtained by solving the convex optimization problem
\begin{equation}
    \min_{Z \in \R^{r \times r}} \mu_\infty\left(RAR^\dag + Z(I-RR^\dag)\right).
\end{equation}
Moreover, as shown in \cite[Lemma 1]{cinto2024polytopic}, this condition is not only sufficient but also necessary for $\Psi_{R,\infty}$ to be a Lyapunov function for the average system \eqref{eq:average_sys}.
For completeness, we restate this lemma below. Since its proof was omitted in the original reference, it is provided in Appendix~\ref{app:MRRA}.

\medskip
\begin{lemma}
    \label{lem:MRRA}
    Let $A\in\R^{n\times n}$ and let $R\in\R^{r\times n}$ be full column rank. Define $\Psi_{R,\infty}(x)\coloneqq\|Rx\|_\infty$.
    The following statements are equivalent:
    \begin{enumerate}[a)]
        \item there exists $\beta>0$ such that
            \begin{equation}
            D^+\Psi_{R,\infty}(x)\le -\beta \Psi_{R,\infty}(x)
            \qquad \forall x\in\R^n,
            \end{equation}
            where $D^+\Psi_{R,\infty}(x)$ denotes the upper-right Dini derivative of
            $\Psi_{R,\infty}(x)$ along the solutions of $\dot x=Ax$; \label{lem:1:lyap}
        \item there exists $\Gamma\in \R^{r\times r}$ such that $\mu_\infty(\Gamma) < 0$ and \label{lem:1:MRRA}
        \begin{equation} 
            \Gamma R = R A.
        \end{equation}  
    \end{enumerate}
\end{lemma}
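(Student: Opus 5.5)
The plan is to prove the two implications separately. The direction b)$\Rightarrow$a) is essentially the $\infty$-norm analogue of Lemma~\ref{lem:Gamma}, while a)$\Rightarrow$b) needs an explicit construction of $\Gamma$. Throughout we use the closed form $\mu_\infty(\Gamma)=\max_i\big(\Gamma_{i,i}+\sum_{k\neq i}\abs{\Gamma_{i,k}}\big)=\max_i(\Metz(\Gamma)\one)_i$.

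\emph{b)$\Rightarrow$a).} Let $x(t)$ solve $\dot x=Ax$ and set $y(t)\coloneqq Rx(t)$. Then $\dot y=RAx=\Gamma Rx=\Gamma y$, so $\Psi_{R,\infty}(x(t))=\norm{y(t)}_\infty$. For $h>0$ we have $y(t+h)=y+h\Gamma y+o(h)$, and therefore
\[
\norm{y(t+h)}_\infty\le\norm{I+h\Gamma}_\infty\norm{y}_\infty+o(h).
\]
Subtracting $\norm{y}_\infty$, dividing by $h$ and taking the $\limsup$ gives $D^+\Psi_{R,\infty}(x)\le\mu_\infty(\Gamma)\Psi_{R,\infty}(x)$. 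Hence a) holds with $\beta=-\mu_\infty(\Gamma)>0$. This is the same computation as \eqref{eq:dir_deriv}, except that we work with Dini derivatives because $\norm{\cdot}_\infty$ is not differentiable.

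\emph{a)$\Rightarrow$b).} We build $\Gamma$ row by row using a Farkas/LP duality argument on the unit polytope $\mathcal P\coloneqq\{x:-\one\preceq Rx\preceq\one\}$. Since $R$ has full column rank, $\mathcal P$ is compact.
\begin{enumerate}[1)]
\item Fix a row $i$ and consider the face $F_i\coloneqq\{x\in\mathcal P:R_{i:}x=1\}$. Every $x\in F_i$ has $\Psi_{R,\infty}(x)=1$, and at such $x$ the Dini derivative is at least the derivative of the active component. Hence a) gives $R_{i:}Ax\le-\beta$ for all $x\in F_i$. (Faces with $R_{i:}x=-1$ are handled by symmetry, using $-R_{i:}$.)
\item Consequently the linear program $\max\{R_{i:}Ax: Rx\preceq\one,\,-Rx\preceq\one,\,R_{i:}x=1\}$ has value at most $-\beta$. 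If $F_i=\emptyset$, row $i$ is redundant and needs a separate treatment; see the last paragraph.
\item LP duality provides multipliers $u,v\succeq0$ and a free scalar $w$ satisfying
\[
(u-v)^\top R+wR_{i:}=R_{i:}A,\qquad \one^\top u+\one^\top v+w\le-\beta.
\]
\item Define the $i$-th row of $\Gamma$ as $\gamma^\top\coloneqq(u-v)^\top+we_i^\top$, so that $\Gamma_{i:}R=R_{i:}A$. The row-$i$ sum in $\mu_\infty$ is
\[
\gamma_i+\sum_{k\neq i}\abs{\gamma_k}\le u_i-v_i+w+\sum_{k\neq i}(u_k+v_k)\le \one^\top u+\one^\top v+w\le-\beta,
\]
because $u_i-v_i\le u_i+v_i$.
\end{enumerate}
Stacking these rows gives $\Gamma R=RA$ and $\mu_\infty(\Gamma)\le-\beta<0$.

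\emph{Main obstacle.} The delicate step is justifying item 1: extracting the pointwise face inequality $R_{i:}Ax\le-\beta$ on each face from the Dini-derivative hypothesis, including at points where several rows are active simultaneously. The argument is that $\Psi_{R,\infty}\ge R_{i:}x$ everywhere, with equality at $x$, so $D^+\Psi_{R,\infty}(x)\ge\frac{d}{dt}R_{i:}x=R_{i:}Ax$. The second difficulty is the degenerate case in which a row never attains the maximum, so $F_i$ is empty. In that case we first argue by Farkas' lemma that $R_{i:}$ lies in the cone generated by the other constraints, that is, $R_{i:}=(u-v)^\top R$ with $\one^\top(u+v)\le1$ and $u_i=v_i=0$. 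We then set $\Gamma_{i:}$ from a convex combination of already-constructed rows, scaled so that its $\mu_\infty$ row sum stays at most $-\beta$. If this fails, we fall back to applying duality to the relaxed program $\max\{R_{i:}Ax-\beta R_{i:}x\}$ over $\mathcal P$ restricted to $R_{i:}x\ge0$.
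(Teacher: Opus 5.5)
Your b)$\Rightarrow$a) argument is correct. So is the face-wise LP-duality construction of row $i$ when $F_i\neq\emptyset$. This route is more self-contained than the paper's, which applies Blanchini--Miani's Theorem~4.33 to $\hat R=\begin{bsmallmatrix}R\\-R\end{bsmallmatrix}$, folds the resulting Metzler $H$ into $\Gamma=H^{11}-H^{12}$, and unfolds $\Gamma$ into $H$ for the converse.

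\textbf{The gap: rows with $F_i=\emptyset$.} The statement does not exclude this case, because full column rank allows strictly redundant rows. For example, with $R=\begin{bsmallmatrix}I_2\\0.4\,\one^\top\end{bsmallmatrix}$ one has $\max_{\mathcal P}R_{3:}x=0.8<1$. Your treatment of this case fails, for three reasons.
\begin{enumerate}[1)]
\item Setting $\Gamma_{i:}\coloneqq\sum_{k\neq i}c_k\Gamma_{k:}$ with $c^\top R=R_{i:}$ does give $\Gamma_{i:}R=R_{i:}A$. However, the negative diagonal entries $\Gamma_{k,k}$ of the other rows land in off-diagonal positions of row $i$, where $\mu_\infty$ counts them in absolute value. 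With $A=-I_2$ (so a) holds with $\beta=1$), your construction forces $\Gamma_{1:}=(-1,0,0)$ and $\Gamma_{2:}=(0,-1,0)$. Hence $\Gamma_{3:}=(-0.4,-0.4,0)$, whose row sum is $0.8>0$. No rescaling is available, because $\Gamma_{i:}R=R_{i:}A$ fixes the row up to the left null space of $R$.
\item The fallback program is not controlled by a), since a) says nothing about $R_{i:}Ax$ where row $i$ is inactive. Take the same $R$ and $A=\begin{bsmallmatrix}-1&0.9\\0&-1\end{bsmallmatrix}$, which satisfies a) with $\beta=0.1$. At $x=(-1,1)^\top$, a feasible point, the objective equals $0.36>0$.
\item The Farkas bound $\one^\top(u+v)\le1$ is too weak. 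You need strict inequality.
\end{enumerate}

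\textbf{The fix: move row $i$ along the left null space of $R$.} If $F_i=\emptyset$, then by compactness $c_*\coloneqq\max_{x\in\mathcal P}R_{i:}x<1$. LP duality gives $c$ with $c^\top R=R_{i:}$ and $\norm{c}_1\le c_*<1$. Set $\nu\coloneqq e_i-c$. Then $\nu^\top R=0$ and $\nu_i-\sum_{k\neq i}\abs{\nu_k}\ge 1-\norm{c}_1>0$. Pick any $\tilde\gamma$ with $\tilde\gamma^\top R=R_{i:}A$; for instance $\tilde\gamma^\top=R_{i:}AR^\dag$ works by full column rank. Define $\Gamma_{i:}\coloneqq\tilde\gamma^\top-s\nu^\top$ with $s\ge0$. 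This row still satisfies $\Gamma_{i:}R=R_{i:}A$. Its $\mu_\infty$ row sum is at most $\tilde\gamma_i+\sum_{k\neq i}\abs{\tilde\gamma_k}-s(1-\norm{c}_1)$, which is $\le-\beta$ for $s$ large. With this case added, your proof of a)$\Rightarrow$b) is complete.
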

\medskip

Given matrices $R$ and $\Gamma$ satisfying $\Gamma R = RA_{\lambda^*}$ and $\mu_\infty(\Gamma)<0$,  it remains to determine for which values of $p\in(1,\infty)$ the same matrix $R$ defines a valid $p$-norm Lyapunov function $V_{R,p}$ for the average system. 

\subsection{Step 2: Smoothing the Polytopic Lyapunov Function}

Rather than evaluating $\mu_p(\Gamma)$ directly, we bound it by means of the following interpolation property of matrix measures.

\medskip
\begin{lemma}
\label{lem:p_interpolation}
Let $\Gamma \in \R^{r \times r}$ and let $p_1, p_2 \in [1, \infty) \cup \{\infty\}$ satisfy $p_1 < p_2$. 
Then, for every $p \in (p_1, p_2)$, 
\begin{equation}
    \mu_p(\Gamma) \leq   \frac{\frac{1}{p} - \frac{1}{p_2}}{\frac{1}{p_1} - \frac{1}{p_2}} \mu_{p_1}(\Gamma) +\frac{\frac{1}{p_1} - \frac{1}{p}}{\frac{1}{p_1} - \frac{1}{p_2}}  \mu_{p_2}(\Gamma). 
\end{equation}
\end{lemma}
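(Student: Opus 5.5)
The plan is to combine the Riesz--Thorin interpolation theorem with the definition of $\mu_p$ as a one-sided derivative of $h\mapsto \norm{I+h\Gamma}_p$ at $h=0^+$.

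Fix $p\in(p_1,p_2)$ and define $\theta\in(0,1)$ by
\[
\frac1p=\frac{1-\theta}{p_1}+\frac{\theta}{p_2}.
\]
Solving gives $1-\theta = \frac{1/p-1/p_2}{1/p_1-1/p_2}$ and $\theta=\frac{1/p_1-1/p}{1/p_1-1/p_2}$. These are exactly the weights in the statement, with the convention $1/\infty=0$ when $p_2=\infty$.

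The first step applies Riesz--Thorin to the linear operator $M_h\coloneqq I+h\Gamma$ on $\R^r$ (counting measure). The real-scalar version of Riesz--Thorin carries at most a constant factor of $2$, and that factor would destroy the derivative argument. To avoid it, I will complexify: view $M_h$ as acting on $\mathbb C^r$. The complex Riesz--Thorin theorem gives
\[
\norm{M_h}_{p,\mathbb C}\le \norm{M_h}_{p_1,\mathbb C}^{1-\theta}\norm{M_h}_{p_2,\mathbb C}^{\theta}.
\]
I then need $\norm{M}_{q,\mathbb C}=\norm{M}_{q,\R}$ for real matrices. This equality does not hold for all $q$ in general. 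What suffices is the chain
\[
\norm{M}_{p,\R}\le\norm{M}_{p,\mathbb C}\le \norm{M}_{p_1,\mathbb C}^{1-\theta}\norm{M}_{p_2,\mathbb C}^{\theta},
\]
together with an estimate of the complex norms at the endpoints.

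This endpoint estimate is the main obstacle. At $q=1$ and $q=\infty$ the complex and real induced norms coincide (max column/row sums of absolute values), so there is no difficulty there. For general $q$, I would first try the inequality $\norm{M}_{q,\mathbb C}\le \norm{M}_{q,\R}$ for real $M$, which is known: for $q\ge 1$, integrating over a random rotation of the phase reduces the complex case to the real one, and this argument gives equality, not merely a constant. If that route proves delicate, I would fall back on a direct convexity argument. Note that $\mu_p(\Gamma)=\sup_{\norm{y}_p=1}\inner{\sigpow{y}{p-1}}{\Gamma y}$, as derived in the proof of Lemma~\ref{lem:Gamma}. One can then interpolate the semi-inner products directly, or work with a formula for $\mu_p$ obtained via Hölder's inequality.

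Assuming the norm interpolation holds with constant $1$, the rest is routine. Write $\norm{M_h}_{q}=1+h\mu_q(\Gamma)+o(h)$ for $q\in\{p_1,p_2\}$. Then
\[
\norm{M_h}_{p}\le \bigl(1+h\mu_{p_1}+o(h)\bigr)^{1-\theta}\bigl(1+h\mu_{p_2}+o(h)\bigr)^{\theta}=1+h\bigl((1-\theta)\mu_{p_1}+\theta\mu_{p_2}\bigr)+o(h).
\]
Subtracting $1$, dividing by $h$, and letting $h\to0^+$ (the limits exist by convexity of $h\mapsto\norm{I+h\Gamma}$) yields the claim. The endpoint cases $p_1=1$ and $p_2=\infty$ fit the same scheme unchanged.
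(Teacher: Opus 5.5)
Your proof is correct, but it takes a different route from the paper. The paper does not argue the inequality directly. It cites \cite[Theorem~7]{perov2017spectral} for convexity of $\alpha\mapsto\mu_{1/\alpha}(\Gamma)$ on $[0,1]$ and reads off the two-point inequality at $\alpha=1/p$. You instead reprove that convexity. Complex Riesz--Thorin gives, for each fixed $h>0$, log-convexity of $1/p\mapsto\norm{I+h\Gamma}_p$. The first-order expansion at $h=0^+$ then converts this into convexity of the logarithmic norm. The paper's route is a one-line citation; yours is self-contained modulo a textbook theorem and shows why $1/p$ is the natural interpolation variable.

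One thing to fix is your remark that $\norm{M}_{q,\mathbb{C}}=\norm{M}_{q,\R}$ ``does not hold for all $q$ in general.'' For same-exponent induced norms of real matrices this is wrong. Complexification can increase the norm only for mixed $p\to q$ norms with $p>q$, which never arise here.

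The phase-averaging argument you mention settles every $q\in[1,\infty)$; $q=\infty$ is the row-sum formula you already noted. Set $c_q\coloneqq\frac{1}{2\pi}\int_0^{2\pi}\abs{\cos\phi}^q\,\D\phi>0$. Then
\[
\frac{1}{2\pi}\int_0^{2\pi}\norm{\mathrm{Re}(e^{-\J\phi}w)}_q^q\,\D\phi=c_q\norm{w}_q^q \quad \text{for all } w\in\mathbb{C}^r,
\]
and $\mathrm{Re}(e^{-\J\phi}Mz)=M\,\mathrm{Re}(e^{-\J\phi}z)$ because $M$ is real. Together these give $\norm{Mz}_q\le\norm{M}_{q,\R}\norm{z}_q$.

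So commit to this route and drop the fallback. The fallback is vague, and it would need the equality $\mu_p(\Gamma)=\max_{\norm{y}_p=1}\inner{\sigpow{y}{p-1}}{\Gamma y}$. The proof of Lemma~\ref{lem:Gamma} only establishes the inequality $\le$.
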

\medskip

\begin{proof}
By \cite[Theorem 7]{perov2017spectral}, the function $\alpha \mapsto \mu_{1/\alpha}(\Gamma)$ is convex on $\alpha \in [0, 1]$.
Equivalently, $\mu_p$ is convex as a function of $1/p$ on $[0,1]$. Since $p\in(p_1,p_2)$, we have $0\leq 1/p_2 < 1/p < 1/p_1 \leq 1$.
The claimed inequality then follows directly from convexity.
\end{proof}
\medskip

Lemma~\ref{lem:p_interpolation} implies that, if $\Gamma$ satisfies $\mu_2(\Gamma)<0$ and $\mu_\infty(\Gamma)<0$, then $\mu_p(\Gamma) < 0$ for all $p\in (2,\infty)$, without requiring any additional computation.
More generally, the lemma allows $\mu_p(\Gamma)$ to be bounded for any $p$ using only the computable extreme cases $\mu_1$ and $\mu_\infty$, thereby enabling $p$ to be treated as a continuous tuning parameter in the switching logic \eqref{eq:maps:g}.
Building on Lemma~\ref{lem:p_interpolation}, we next show that the $\infty$-norm certificate $\mu_\infty(\Gamma) < 0$ implies $\mu_p(\Gamma) < 0$ for all $p$ above a computable threshold.

\medskip
\begin{theorem}
\label{thm:underp}
Suppose $\Gamma \in \R^{r \times r}$ satisfies $\mu_\infty(\Gamma) < 0$.
Then the following statements hold:
\begin{enumerate}[a)]
\item If $\mu_1(\Gamma) < 0$, then $\mu_p(\Gamma) < 0$ for all $p \in (1,\infty)$. \label{thm:up:1}

\item If $\mu_1(\Gamma) \geq 0$, then $\mu_p(\Gamma) < 0$ for all $p\in (\underline{p} ,\infty)$, where \label{thm:up:2}
\begin{equation}
    \underline{p} \coloneqq 1 + \max_{i\in\N_r} \frac{[\Metz(\Gamma)^\top \one]_i}{-[\Metz(\Gamma) \one]_i} \geq 1.
    \label{eq:underlinep}
\end{equation} 
\end{enumerate}
\end{theorem}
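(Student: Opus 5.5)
Part~\ref{thm:up:1}) follows directly from Lemma~\ref{lem:p_interpolation} with $p_1=1$ and $p_2=\infty$. For every $p\in(1,\infty)$ it gives
$\mu_p(\Gamma)\le \tfrac{1}{p}\mu_1(\Gamma)+\bigl(1-\tfrac1p\bigr)\mu_\infty(\Gamma)$.
Both weights are positive and both measures are negative, so $\mu_p(\Gamma)<0$.

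For part~\ref{thm:up:2}), plain interpolation between $\mu_1$ and $\mu_\infty$ only gives the coarser threshold $1+\mu_1(\Gamma)/(-\mu_\infty(\Gamma))$. To reach the row-by-row threshold \eqref{eq:underlinep} I will work directly with a variational formula for $\mu_p$.

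\emph{Step 1 (variational formula).} For $p\in(1,\infty)$ the $p$-norm is differentiable away from $0$. The limit defining $\mu_p$ is the maximum of the one-sided directional derivatives over the unit sphere, which gives
\begin{equation*}
\mu_p(\Gamma)=\max_{\norm{y}_p=1}\inner{\nabla\norm{y}_p}{\Gamma y}=\max_{\norm{y}_p=1}\inner{\sigpow{y}{p-1}}{\Gamma y}.
\end{equation*}
This is the standard dual-vector characterization of the logarithmic norm \cite{desoer1972measure}. Alternatively, it follows from the same argument as in the proof of Lemma~\ref{lem:Gamma}, combined with the fact that the sup in $\norm{I+h\Gamma}_p$ is attained on the compact unit sphere and the difference quotients converge uniformly there. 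I expect this step to be the main technical point; the rest is elementary.

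\emph{Step 2 (entrywise bound and Young's inequality).} Fix $y$ with $\norm{y}_p=1$ and bound the off-diagonal terms by absolute values:
\begin{equation*}
\inner{\sigpow{y}{p-1}}{\Gamma y}\le \sum_i\Gamma_{i,i}|y_i|^p+\sum_{i\ne j}|\Gamma_{i,j}|\,|y_i|^{p-1}|y_j|.
\end{equation*}
Young's inequality with exponents $\tfrac{p}{p-1}$ and $p$ gives $|y_i|^{p-1}|y_j|\le \tfrac{p-1}{p}|y_i|^p+\tfrac1p|y_j|^p$. Substituting this and regrouping by the index of $|y_\cdot|^p$, the diagonal terms split as $\Gamma_{i,i}=\tfrac{p-1}{p}\Gamma_{i,i}+\tfrac1p\Gamma_{i,i}$. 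The result is
\begin{equation*}
\inner{\sigpow{y}{p-1}}{\Gamma y}\le \sum_i |y_i|^p\, c_i(p),\qquad c_i(p):=\tfrac{p-1}{p}[\Metz(\Gamma)\one]_i+\tfrac1p[\Metz(\Gamma)^\top\one]_i .
\end{equation*}
Since $\sum_i|y_i|^p=1$, this yields $\mu_p(\Gamma)\le\max_i c_i(p)$.

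\emph{Step 3 (threshold).} The closed form $\mu_\infty(\Gamma)=\max_i[\Metz(\Gamma)\one]_i<0$ shows that $[\Metz(\Gamma)\one]_i<0$ for every $i$. Hence $c_i(p)<0$ if and only if $(p-1)\bigl(-[\Metz(\Gamma)\one]_i\bigr)>[\Metz(\Gamma)^\top\one]_i$, that is, $p>1+[\Metz(\Gamma)^\top\one]_i/(-[\Metz(\Gamma)\one]_i)$. Taking the maximum over $i$ gives $\mu_p(\Gamma)<0$ for all $p>\underline p$.

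Finally, $\mu_1(\Gamma)=\max_i[\Metz(\Gamma)^\top\one]_i\ge0$ implies that some ratio in \eqref{eq:underlinep} is nonnegative, so $\underline p\ge1$. The same computation also re-proves part~\ref{thm:up:1}), since in that case every ratio is negative.
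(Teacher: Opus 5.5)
Your proof is correct and has the same skeleton as the paper's. Part a) comes from Lemma~\ref{lem:p_interpolation} with $p_1=1$, $p_2=\infty$. Part b) comes from the row/column bound $\mu_p(\Gamma)\le\max_{i}\frac{(p-1)[\Metz(\Gamma)\one]_i+[\Metz(\Gamma)^\top\one]_i}{p}$, followed by the identical threshold computation.

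The only difference is how you obtain that bound. The paper imports it from \cite[Theorem~8]{perov2017spectral}. You derive it from the dual-vector formula $\mu_p(\Gamma)=\max_{\norm{y}_p=1}\inner{\sigpow{y}{p-1}}{\Gamma y}$ plus Young's inequality, which makes the argument self-contained.

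In that derivation, note that the inequality you actually use is $\mu_p(\Gamma)\le\max_{\norm{y}_p=1}\inner{\sigpow{y}{p-1}}{\Gamma y}$. This is the reverse of what the computation in Lemma~\ref{lem:Gamma} yields, so the uniform-convergence step you mention is essential, not an optional alternative. It does hold. The difference quotients are monotone in $h$ by convexity of the norm, and they converge to a limit that is continuous on the compact unit sphere, so Dini's theorem applies.
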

\medskip

\begin{proof}
Item \ref{thm:up:1}) follows directly from Lemma~\ref{lem:p_interpolation}.

To prove item \ref{thm:up:2}), assume $\mu_1(\Gamma) \geq 0$ and let $p\in(\underline{p},\infty)$.
For brevity, define $\rho_i\coloneqq [\Metz(\Gamma)\one]_i$, $C_i \coloneqq [\Metz(\Gamma)^\top\one]_i$, which are the row and column sums, respectively.
From the definition of $\mu_\infty$ \cite{desoer1972measure}, we have $\mu_\infty(\Gamma) = \max_{i\in\N_r} \rho_i$.
Since $\mu_\infty(\Gamma) < 0$ by hypothesis, it follows that $\rho_i < 0$ for all $i \in \N_r$.
Moreover, $\mu_1(\Gamma) = \max_{i\in\N_r} C_i \geq 0$, so there exists at least one index $i \in \N_r$ such that $C_i \geq 0$. Consequently $- C_i / \rho_i \geq 0$, and therefore $\underline{p} \geq 1$.

By \cite[Theorem~8]{perov2017spectral}, for every $p>\underline{p}\geq 1$, 
\begin{align} 
&\mu_p(\Gamma) \leq \max_{i\in\N_r} \Big(\Gamma_{i,i} + \frac{p-1}{p} \sum_{\substack{j\in\N_r\\j\neq i}} \abs{\Gamma_{i,j}} + \frac{1}{p} \sum_{\substack{j\in\N_r\\j\neq i}} \abs{\Gamma_{j,i}} \Big) \notag\\
&= \max_{i\in\N_r} \frac{(p-1)\rho_i + C_i}{p}  < \max_{i\in\N_r} \frac{(\underline{p}-1)\rho_i + C_i}{p} \label{eq:mu_p2} \\
&\leq \frac{1}{p} \max_{i\in\N_r} \left(- C_i + C_i \right) = 0. \label{eq:mu_p3}
\end{align}
The strict inequality in \eqref{eq:mu_p2} follows from $p > \underline{p}$ and $\rho_i < 0$ for all $i \in \N_r$. 
Inequality \eqref{eq:mu_p3} follows from the definition of $\underline{p}$, which implies $(\underline{p}-1) \rho_i \leq -C_i $ for all $i \in \N_r$. 
Hence, $\mu_p(\Gamma) < 0$ for all $p \in (\underline{p}, \infty)$, concluding the proof.
\end{proof}
\medskip

\begin{remark}
The threshold $\underline{p}$ defined in \eqref{eq:underlinep} quantifies the asymmetry of the Metzler matrix $\Metz(\Gamma)$. 
If $\Metz(\Gamma)$ is symmetric, then its row and column sums coincide, and therefore $\underline{p} = 1$. In that case, $\mu_p<0$ for all $p \in (1,\infty)$. 
As the off-diagonal entries become more asymmetric, larger values of $p$ are required to compensate for this asymmetry.
\end{remark}
\medskip

Theorem~\ref{thm:underp} completes the method: given $R$ and $\Gamma$ from Step~1 with $\mu_\infty(\Gamma)<0$, any $p\in(\underline{p},\infty)$ yields a valid $p$-norm Lyapunov function $V_{R,p}$ that can be used directly in the hybrid control strategies of Sections~\ref{sec:hybrid}--\ref{sec:dwell_time}, with convergence regions that approximate the prescribed polytopic target region.

In this way, the contribution of the paper is not merely the use of weighted $p$-norm Lyapunov functions, but the combination of a smooth $p$-norm extension of the steepest-decrease strategy with a constructive route from polytopic certificates to Lyapunov functions that are directly usable under unconstrained and dwell-time switching.


\section{Examples}
\label{sec:examples}

Consider the switched affine system \eqref{eq:sas} with matrices 
\begin{equation}
    A_1\coloneqq\begin{bsmallmatrix}
        -1 & \phantom{-}2 \\ -2 & -1
    \end{bsmallmatrix}, \medspace \medspace
    A_2\coloneqq\begin{bsmallmatrix}
        -1 & -2 \\ \phantom{-}2 & -1
    \end{bsmallmatrix},  \medspace \medspace
    b_1=-b_2\coloneqq\begin{bsmallmatrix}
        2.5 \\ 0 
    \end{bsmallmatrix},
    \label{eq:example1:A}
\end{equation}
taken from \cite[Example 4.1]{cinto2024polytopic}.
Choosing $\lambda^* \coloneqq [0.5,\,0.5]^\top$ yields $A_{\lambda^*} = -I_2$ and $b_{\lambda^*}=0$, so Assumption~\ref{ass:1} holds.

Let $R = I_2$ and suppose the desired target region is given by a sublevel set of $\Psi_{R,\infty}(x) = \norm{x}_\infty$. 
Defining $\Gamma = A_{\lambda^*} = -I_2$, we obtain $\Gamma R = R A_{\lambda^*}$ and $\mu_\infty(\Gamma)=-1<0$.
Therefore, by Lemma~\ref{lem:MRRA}, $\Psi_{R,\infty}$ is a Lyapunov function for the average system.
As shown in \cite{cinto2024polytopic}, however, this polytopic Lyapunov function fails to stabilize the system 
defined by \eqref{eq:example1:A}, even when the time regularization technique is applied.
On the other hand, since $\mu_1(\Gamma)=-1<0$, Theorem~\ref{thm:underp} guarantees $\mu_p(\Gamma)<0$ for all $p \in (1, \infty)$.
Hence, by Lemma~\ref{lem:Gamma}, $V_{R,p}$ is a Lyapunov function for the average system satisfying \eqref{eq:p-norm-lyap-cross} with $S \coloneqq (p\,|\mu_p(\Gamma)|)^{1/p} R = p^\frac{1}{p} I_2$ for all $p \in (1, \infty)$.

Under the hybrid strategy of Section~\ref{sec:hybrid}, the set $\setA=\{0\}\times\N_N$ is UGAS for any $p \in (1,\infty)$, and the choice of $p$ has limited practical impact in this setting, since switching becomes arbitrarily fast as the continuous state approaches the origin regardless of $p$. The main benefit of choosing $p \neq 2$ arises under dwell-time constraints, where the convergence region is a sublevel set of $V_{R,p}$ whose geometry depends on $p$. We illustrate this effect by applying both regularization techniques of Section~\ref{sec:dwell_time}.

\subsection{Space Regularization}

We apply the space regularization of Section~\ref{subsec:space_regularization} with $\eta = 0.5$ and investigate the effect of varying $p$ over the range $[2, 20]$.
For $p = 2$, we set $\varepsilon_2 \coloneqq 0.05$, so that the target region is $\Omega_2 = \{x : V_{R,2}(x) \leq \varepsilon_2\}$. 
For each $p > 2$, define
\begin{equation}
    \varepsilon_p \coloneqq \frac{1}{p} \left( 
    \frac{\sqrt{2\varepsilon_2}}{2^{1/2 - 1/p}} 
    \right)^p, \label{eq:epsilon_p}
\end{equation}
and  
$\Omega_p \coloneqq \{x : V_{R,p}(x) \leq \varepsilon_p\}$. This choice ensures that 
$\Omega_{p_2} \subseteq \Omega_{p_1}$ for all $p_2 \geq p_1 \geq 2$.
Therefore, any improvement in dwell time as $p$ increases cannot be attributed to an enlargement of the target region.

For selected values of $p$ in $[2, 20]$, we simulate the space-regularized system $\Hy_\varepsilon$ from $500$ initial conditions uniformly distributed along a circle of radius $1.5$ in the state space, up to a final simulation time of $t_{\mathrm{final}} = 5$. 
For each trajectory, dwell times are classified as exterior if they occur before the first entry into $\Omega_p$, and as interior thereafter.

\begin{figure}[htb]
\centering
\includegraphics[
    width=0.98\linewidth
]{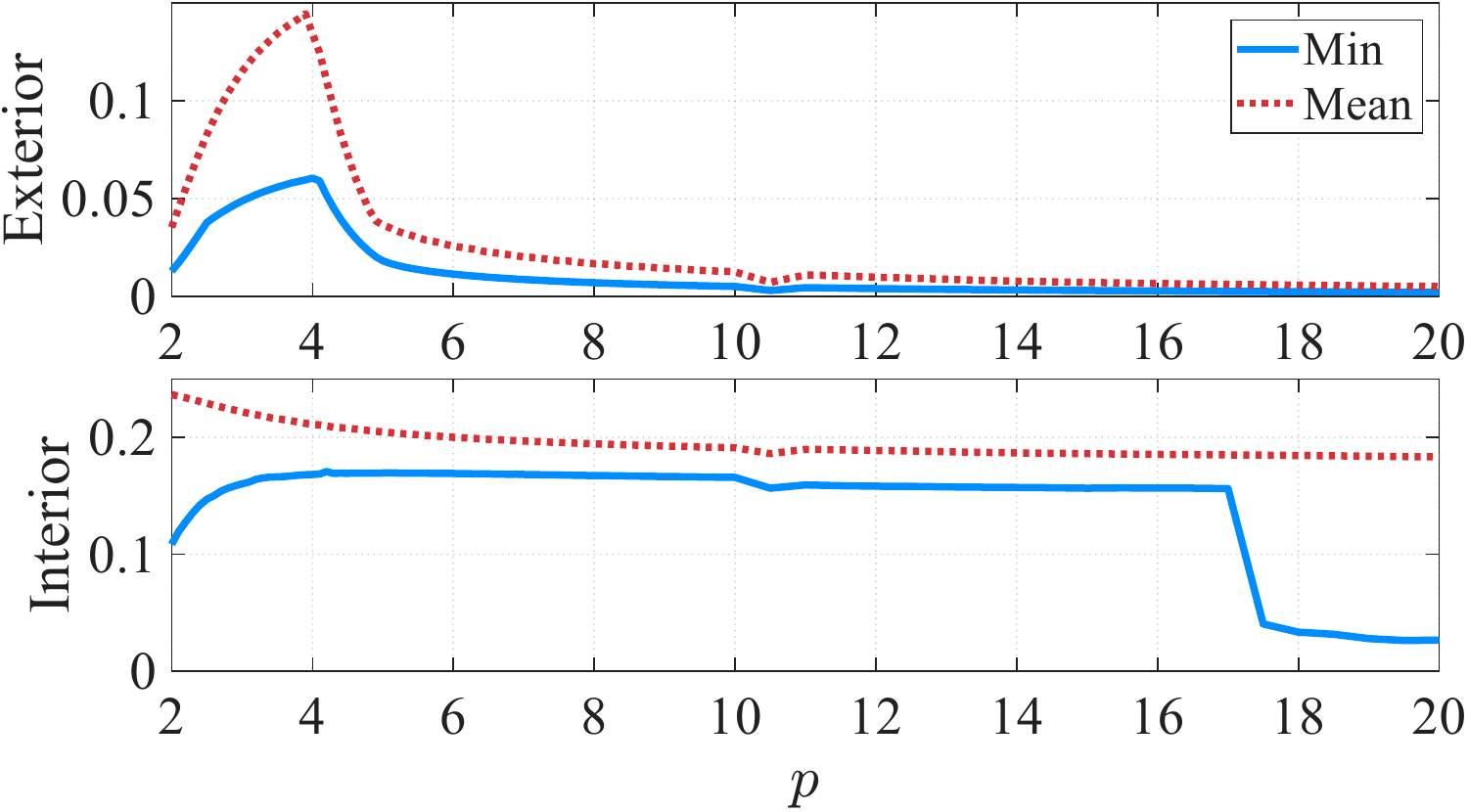}
\caption{Minimum and mean exterior and interior dwell-times over $500$ initial conditions, as functions of $p$, under space regularization with $\eta = 0.5$ and $\varepsilon_p$ as in \eqref{eq:epsilon_p}.}
\label{fig:space_reg_times}
\end{figure}

\begin{figure*}[htb]
\centering
    \begin{subfigure}[b]{0.325\textwidth}
        \centering
        \includegraphics[width=\textwidth]{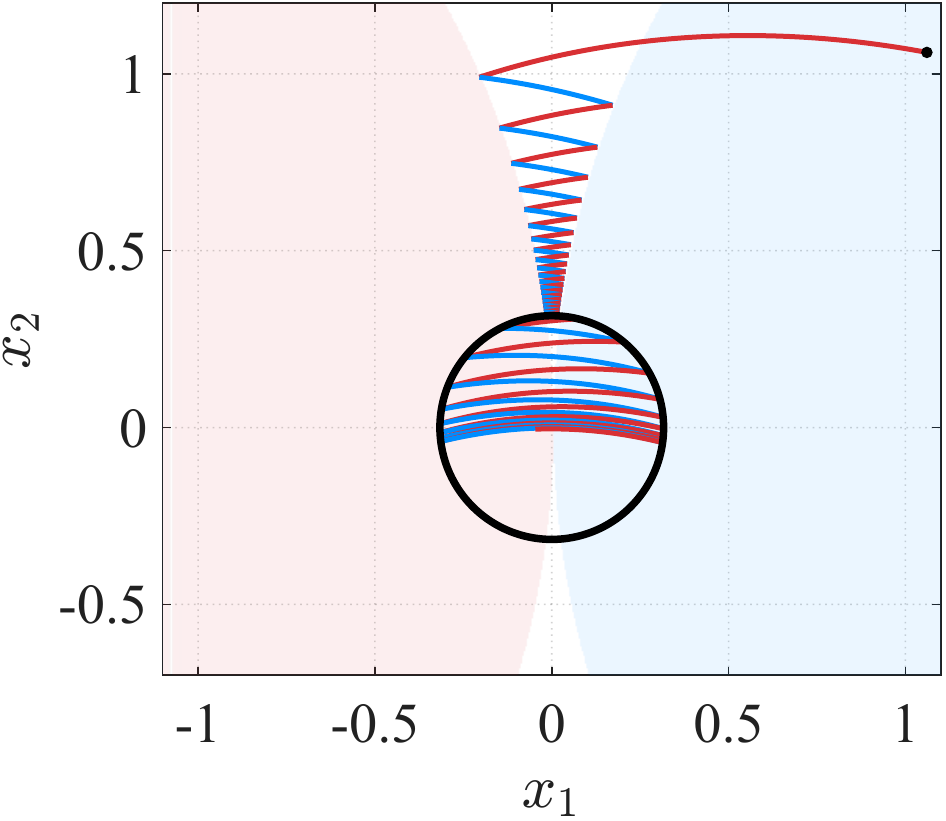}
        \caption{$p=2$}
        \label{fig:p2}
    \end{subfigure}
    \hfill
    \begin{subfigure}[b]{0.325\textwidth}
        \centering
        \includegraphics[width=\textwidth]{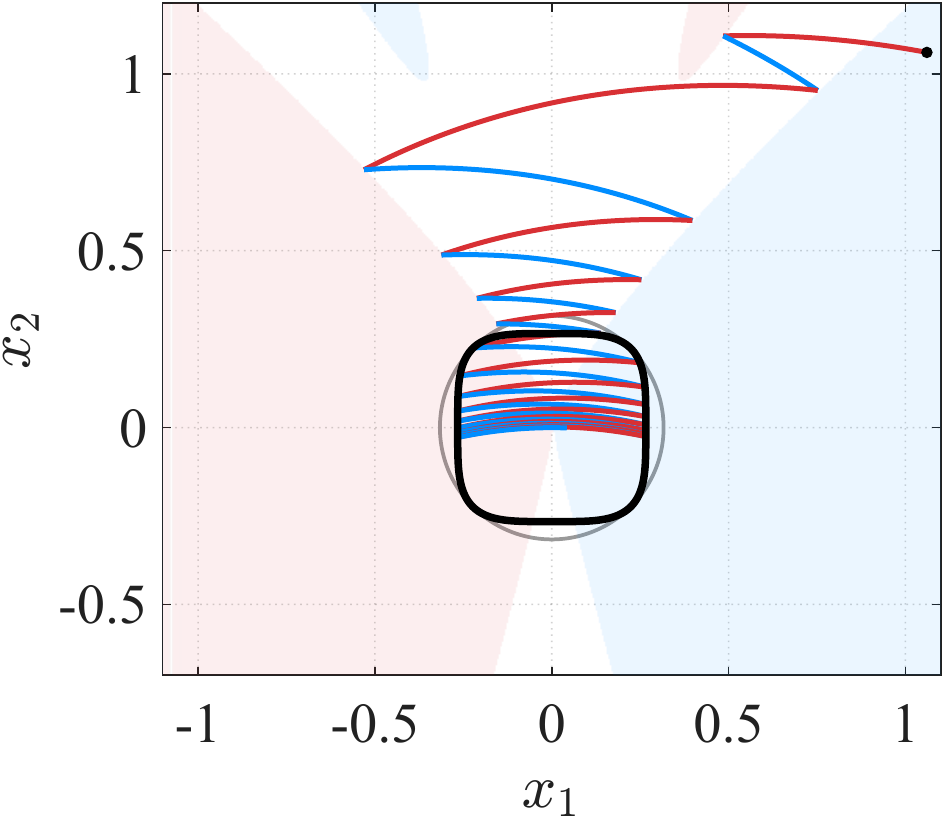}
        \caption{$p=4$}
        \label{fig:p4}
    \end{subfigure}
    \hfill
    \begin{subfigure}[b]{0.325\textwidth}
        \centering
        \includegraphics[width=\textwidth]{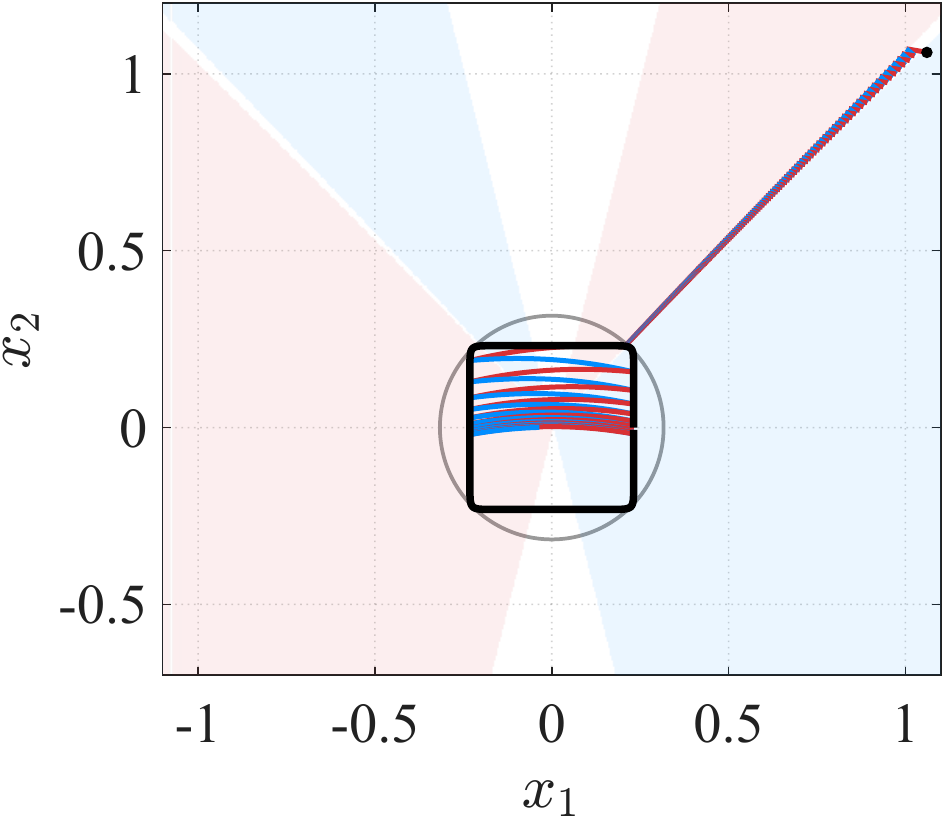}
        \caption{$p=20$}
        \label{fig:p20}
    \end{subfigure}
\caption{Closed-loop trajectories under space regularization with $\eta=0.5$, for $p=2$, $4$, and $20$. 
The shaded regions indicate where the condition $\inner{\nabla V_{R,p}(x)}{f_k(x)}\geq-\eta V_{S,p}(x)$ holds for subsystem~1 (blue) and 2 (red).
In each plot, a trajectory starting from $x_0 = 1.5[\cos(45^\circ),\, \sin(45^\circ)]^\top$ (black dot) is shown, with line color indicating the active mode. The boundary of the target region $\Omega_p$ is shown in black, while the boundary of $\Omega_2$ (gray circle) highlights the inclusion $\Omega_p \subset \Omega_2$.}
\label{fig:space_reg_comp}
\end{figure*}

Figure~\ref{fig:space_reg_times} shows the minimum and mean dwell times, for both the exterior and interior phases, as functions of $p$. Compared with the quadratic case $p = 2$, the minimum exterior dwell-time increases with $p$, reaches a factor of $4.67$ at $p = 4$ (while the mean increases by a factor of $3.79$), and then decreases for larger values of $p$.
This decrease is consistent with the fact that, as $p \to \infty$, the $p$-norm Lyapunov function approaches the polytopic function $\Psi_{R,\infty}$, which, as shown in \cite{cinto2024polytopic}, fails to stabilize this system under the steepest-decrease strategy.
The minimum interior dwell-time also improves up to $p \approx 17$, reaching a factor of $1.55$ at $p = 4$ relative to the quadratic case, despite the fact that $\Omega_p$ becomes smaller as $p$ increases. 
Beyond that range, the interior dwell-time decreases, as the increasingly sharp corners 
of $\Omega_p$ allow trajectories to leave the target region more easily.
Figure~\ref{fig:space_reg_comp} illustrates this effect on individual trajectories by comparing the cases $p = 2$, $4$, and $20$, all starting from the initial condition $x_0 = 1.5[\cos(45^\circ),\, \sin(45^\circ)]^\top$. 
The trajectory for $p = 4$ exhibits fewer switches in both exterior and interior phases, while  $\Omega_{20}\subset\Omega_4\subset\Omega_2$.

\subsection{Time Regularization}

We now illustrate the time regularization of Section~\ref{subsec:time_regularization} for $p = 4$ and $\eta = 0.5$. Two values of the minimum dwell-time parameter are considered: $T_1 = 0.06$ and $T_2 = 0.03$. The value $T_1$ is chosen close to the minimum exterior dwell-time observed for $p = 4$ under space regularization (see Figure~\ref{fig:space_reg_times}).

Figure~\ref{fig:time_reg_comp} shows the closed-loop trajectories from two representative initial conditions. For $T_1$, the switching frequency is significantly lower than for $T_2$, resulting in longer intervals of continuous evolution between jumps. As expected from Theorem~\ref{thm:time_reg}, increasing $T$ reduces the switching frequency, at the cost of convergence to a larger neighborhood of the origin.

\begin{figure}[ht]
\centering
    \begin{subfigure}[b]{0.48\linewidth}
        \centering
        \includegraphics[width=\textwidth,
                        trim=0.7cm 0.2cm 2.2cm 1.2cm,
                        clip]{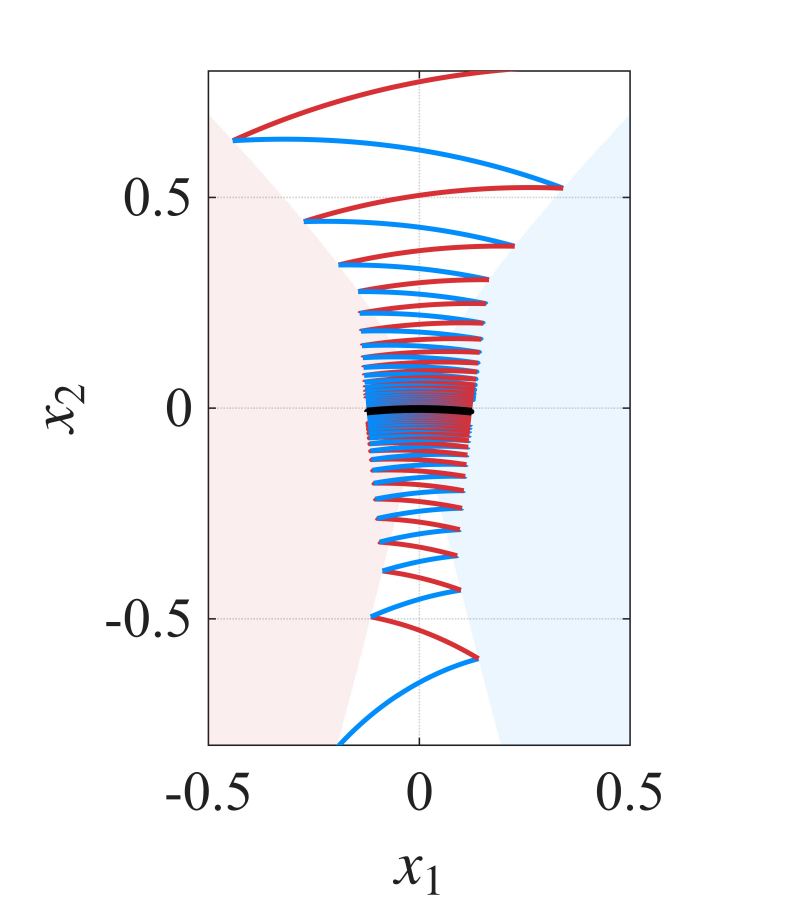}
        \caption{$T_1=0.06$}
        \label{fig:T1}
    \end{subfigure}
    \hfill
    \begin{subfigure}[b]{0.48\linewidth}
        \centering
        \includegraphics[width=\textwidth,
                        trim=0.7cm 0.2cm 2.2cm 1.2cm,
                        clip]{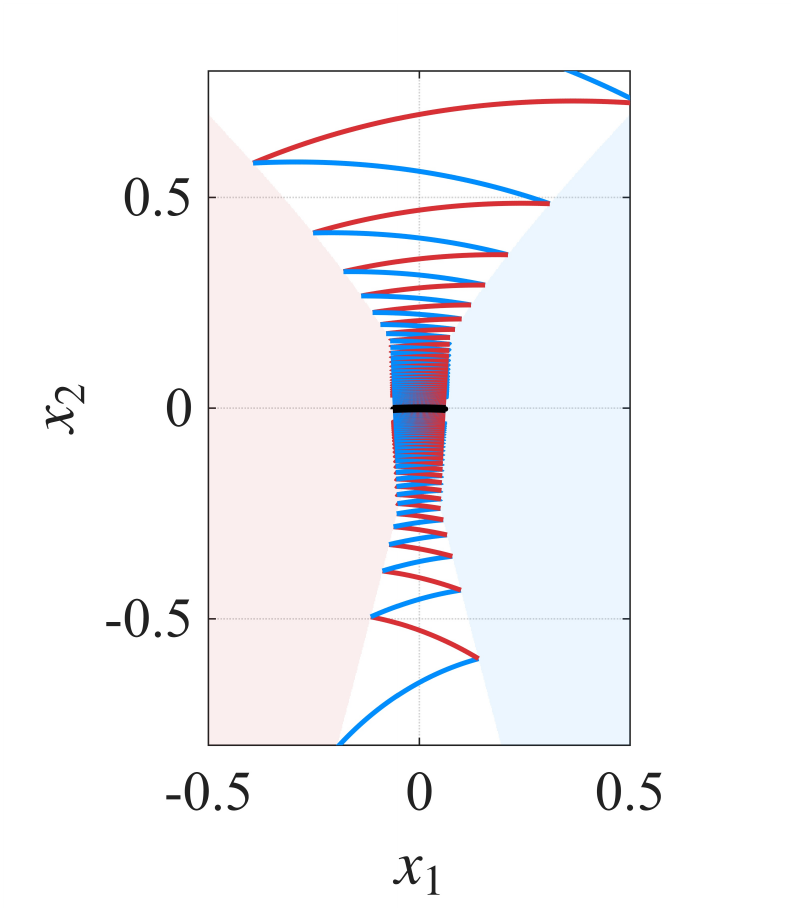}
        \caption{$T_2=0.03$}
        \label{fig:T2}
    \end{subfigure}
\caption{Closed-loop trajectories under time regularization with $\eta = 0.5$ and $p = 4$, for $T_1 = 0.06$ (left) and $T_2 = 0.03$ (right). 
The shaded regions indicate where the condition $\inner{\nabla V_{R,p}(x)}{f_k(x)}\geq-\eta V_{S,p}(x)$ holds for subsystem~1 (blue) and subsystem 2 (red).
Trajectories start from $x_0^{(1)} = [0,\, 1.5]^\top$ and $x_0^{(2)} = [0,\, -1.5]^\top$, and are simulated up to $10$ time units; the portion corresponding to $t \ge 6$ is shown in black to highlight the switching behavior near the convergence region.}
\label{fig:time_reg_comp}
\end{figure}

\section{Conclusion}\label{sec:conclusion}

In this paper, we extended the steepest-decrease switching strategy for switched affine systems from quadratic to weighted $p$-norm Lyapunov functions, motivated by the fact that the direct extension of the strategy may fail in the $\infty$-norm (i.e., polytopic) case. 
Assuming the existence of a Hurwitz convex combination with zero affine term, we showed that the proposed strategy guarantees UGAS under unconstrained switching and practical UGAS under dwell-time constraints via two regularization approaches.

On the theoretical side, we established a characterization of the Hurwitz property in terms of weighted $p$-norm Lyapunov functions, thereby generalizing the classical quadratic Lyapunov equivalence to the family $p \in (1,\infty)$. 
On the practical side, we developed a constructive two-step method that starts from a polytopic Lyapunov certificate for the average system and yields a smooth weighted $p$-norm Lyapunov function for all sufficiently large $p$. 
In particular, once the certificate $\mu_\infty(\Gamma)<0$ is established, the same matrix $R$ yields a valid weighted $p$-norm Lyapunov function suitable for the proposed hybrid-control and dwell-time developments. 

Taken together, these results show that the paper's contribution does not lie merely in replacing quadratic Lyapunov functions by another smooth family, but in combining a weighted $p$-norm extension of the steepest-decrease strategy with a constructive route from polytopic certificates to Lyapunov functions that are directly usable under both unconstrained and dwell-time switching. This construction avoids the nonsmoothness issues of direct polytopic designs while preserving, in smooth form, the geometry of the desired target region. Numerical examples illustrate that choosing $p>2$ can improve dwell-time performance.

Future work includes designing Lyapunov functions when the polytopic function associated with the desired target region is not itself a Lyapunov function for the average system.


\appendices
\renewcommand{\thesection}{\Alph{section}}
\renewcommand{\thesectiondis}{\Alph{section}}

\renewcommand{\thesubsection}{\thesection.\arabic{subsection}}
\renewcommand{\thesubsectiondis}{\thesection.\arabic{subsection}}

\numberwithin{equation}{section}

\renewcommand{\thetheorem}{\thesection.\arabic{theorem}}

\section{Construction of p-norm Lyapunov Functions}
\label{app:Lyap}
This appendix presents the results required to construct a global $p$-norm Lyapunov function using the system's Jordan decomposition. Lemma~\ref{lemma:Lyap_Aggregation} provides a constructive procedure for a global $p$-norm Lyapunov function by combining the local stability properties of the individual subsystems represented by the diagonal blocks of a Jordan matrix $J$. Lemma~\ref{lem:Jordan_Complex_Block} covers the case of a real Jordan block associated with complex eigenvalues, while Lemma~\ref{lem:Jordan_Real_Block} addresses the case of real eigenvalues. By applying Lemmas~\ref{lem:Jordan_Complex_Block} and \ref{lem:Jordan_Real_Block} to each individual Jordan block and subsequently utilizing Lemma~\ref{lemma:Lyap_Aggregation}, one can construct a global $p$-norm Lyapunov function for any linear system in Jordan canonical form.

\medskip
\begin{lemma}
\label{lemma:Lyap_Aggregation}
Let $J \in \R^{n\times n}$ be a block-diagonal matrix of the form
\begin{equation}
J\coloneqq \text{diag}(B_1, B_2, \dots, B_m)
\end{equation}
where $B_i \in \R^{n_i \times n_i}$ for all $i \in \N_m$ satisfying $\sum_{i=1}^m n_i = n$.

Let $p\in(1,\infty)$ and assume that for each $i \in \N_m$ there exist a full column rank matrix $R_i \in \R^{r_i \times n_i}$ and a scalar $\beta_i > 0$ such that the function
$V_{R_i, p}(z_i)$ satisfies 
\begin{equation}
    \inner{\nabla V_{R_i,p}(z_i)}{B_i z_i} \leq -\beta_i V_{R_i, p}(z_i)
    \label{eq:block_decay}
\end{equation}
for all $z_i \in \R^{n_i}$.
Define the block-diagonal matrix 
\begin{equation}
R\coloneqq \text{diag}(R_1, R_2, \dots, R_m) \in \R^{(\sum_{i=1}^m r_i) \times n}.
\end{equation}

Then, the function
$V_{R, p}(z)$ satisfies
\begin{equation}
    \inner{\nabla {V}_{R, p}(z)}{Jz} \leq -\beta V_{R, p}(z),
    \label{eq:global_decay}
\end{equation}
for all $z\in\R^n$,
with $\beta \coloneqq  \min_{i\in\N_m} \beta_i$.
\end{lemma}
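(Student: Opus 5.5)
The plan is to exploit the additive separability of $V_{R,p}$ over the block structure. With $z = [z_1^\top,\dots,z_m^\top]^\top$ partitioned conformally with $J$, the block-diagonal form of $R$ gives $Rz = [(R_1z_1)^\top,\dots,(R_mz_m)^\top]^\top$. Since $\norm{\cdot}_p^p$ is a sum of $p$-th powers of absolute values of the entries, we get
\begin{equation*}
V_{R,p}(z) = \frac{1}{p}\sum_{i\in\N_m}\norm{R_iz_i}_p^p = \sum_{i\in\N_m} V_{R_i,p}(z_i).
\end{equation*}
Before using this, I will check that $R$ has full column rank. This holds because its diagonal blocks do: if $Rz=0$, then $R_iz_i=0$ for every $i$, hence $z_i=0$.

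Next I will compute the gradient using \eqref{eq:gradient}. Because the signed-power map acts componentwise, $\sigpow{Rz}{p-1}$ is the stacked vector of the $\sigpow{R_iz_i}{p-1}$. Applying $R^\top=\diag(R_1^\top,\dots,R_m^\top)$ then shows that $\nabla V_{R,p}(z)$ is the stacked vector of the $\nabla V_{R_i,p}(z_i)$. Equivalently, one can read this off directly from the separable sum, since each summand depends only on its own $z_i$. The vector $Jz$ is likewise the stacked vector of the $B_iz_i$, so
\begin{equation*}
\inner{\nabla V_{R,p}(z)}{Jz} = \sum_{i\in\N_m}\inner{\nabla V_{R_i,p}(z_i)}{B_iz_i}.
\end{equation*}

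Finally I will apply \eqref{eq:block_decay} to each term and use $V_{R_i,p}\ge 0$ together with $\beta_i\ge\beta$. This gives
\begin{equation*}
\inner{\nabla V_{R,p}(z)}{Jz} \le -\sum_{i}\beta_iV_{R_i,p}(z_i) \le -\beta\sum_i V_{R_i,p}(z_i) = -\beta V_{R,p}(z),
\end{equation*}
which is \eqref{eq:global_decay}.

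I expect no real obstacle here. The only points requiring care are bookkeeping the index partition, so that the rows of $R$ split as $r_1,\dots,r_m$ while the columns split as $n_1,\dots,n_m$, and noting that nonnegativity of each $V_{R_i,p}$ is what allows every $\beta_i$ to be replaced by the minimum $\beta$.
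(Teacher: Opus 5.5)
Your proposal is correct and takes essentially the same route as the paper's proof: additive separability $V_{R,p}(z)=\sum_i V_{R_i,p}(z_i)$, blockwise stacking of $\nabla V_{R,p}(z)$ and of $Jz$, and then termwise application of \eqref{eq:block_decay} with $\beta_i\ge\beta$ and $V_{R_i,p}\ge 0$. Your extra check that $R$ has full column rank is not needed for the inequality, but it is harmless, and the paper relies on that fact later when it builds $\hat R$ in the proof of Theorem~\ref{th:Lyap}.
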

\medskip

\begin{proof}
    Given the block-diagonal structure of $J$, we can partition $z$ into $m$ vectors consistent with the dimensions of the blocks $B_{n_i}$ as
    $z= \begin{bmatrix} z_1^\top & \dots & z_m^\top \end{bmatrix}^\top$,
    where $z_i \in \R^{n_i}$. Consequently, 
    \begin{equation}
        Jz = \begin{bmatrix}
            (B_1 z_1)^\top & \dots & (B_m z_m)^\top 
        \end{bmatrix}^\top. \label{eq:Jz}
    \end{equation}
    
     Similarly, due to the block-diagonal structure of $R$, the vector $Rz$ can be expressed as the concatenation
       $Rz = \begin{bmatrix} (R_1 z_1)^\top \medspace \dots \medspace (R_m z_m)^\top \end{bmatrix}^\top$.
    Then, considering the definition of $V_{R, p}$, we have
    \begin{equation}
        V_{R, p}(z) = \frac{1}{p} \sum_{i=1}^m \norm{R_i z_i}_p^p = \sum_{i=1}^m V_{R_i, p}(z_i). \label{eq:sum_lyap}
    \end{equation}
    Since each $V_{R_i, p}(z_i)$ depends exclusively on $z_i$,
    \begin{equation}
        \nabla {V}_{R, p}(z) = \begin{bmatrix} \nabla {V}_{R_1, p}(z_1) & \dots & \nabla {V}_{R_m, p}(z_m) \end{bmatrix}. \label{eq:nablaR}
    \end{equation}
    Multiplying \eqref{eq:nablaR} by \eqref{eq:Jz}, and applying \eqref{eq:block_decay}, we obtain
    \begin{align*}
    &\inner{\nabla V_{R, p}(z)}{Jz} = \sum_{i=1}^m \inner{\nabla V_{R_i, p}(z_i)}{B_i z_i} \notag\\
    &\quad\leq \sum_{i=1}^m -\beta_i V_{R_i, p}(z_i) \leq -\beta \sum_{i=1}^m V_{R_i, p}(z_i) 
        = -\beta V_{R, p}(z),
    \end{align*}
    which completes the proof.
\end{proof}

\medskip

\begin{lemma}
\label{lem:Jordan_Complex_Block}
    Let $C_{n}(\alpha, \omega) \in \R^{2n \times 2n}$ be a matrix of the form
    \begin{equation}
        C_{n}(\alpha, \omega) \coloneqq 
        I_n \otimes C(\alpha, \omega) + \mathcal{N}_n \otimes I_2,
        \label{eq:block_complex}
    \end{equation}
    where $C(\alpha, \omega) \coloneqq   \begin{bsmallmatrix} -\alpha & \omega \\ -\omega & -\alpha \end{bsmallmatrix}$ with $\alpha$, $\omega > 0$.
    Choose $m \in \N$ such that
    \begin{equation}
    \label{eq:m}
    \zeta \coloneqq  \alpha - \omega \tan \left(\frac{\pi}{2m} \right) > 0
    \end{equation}
and let
    $Q(m) \in \R^{m \times 2}$ be a matrix with rows
\begin{equation}
Q(m)_{i:}\coloneqq [\cos((i-1)\pi/m),\,  \sin((i-1)\pi/m)]
\end{equation}
for all $i\in\N_{m}$. 
For any $\varepsilon \in (0, \zeta)$, define the diagonal matrix
\begin{equation}
\label{eq:E}
E(\varepsilon,n)\coloneqq \text{diag}(\varepsilon^{n-1}, \varepsilon^{n-2}, \dots, 1),
\end{equation}
and 
the block-diagonal matrix $R(\varepsilon,n,m) \in \R^{(mn) \times 2n}$ as
\begin{equation}
    R(\varepsilon,n,m) \coloneqq  E(\varepsilon,n) \otimes Q_m.
\end{equation}
    Then, for any given $p \in (1, \infty)$, the function $V_{R(\varepsilon,n,m), p}(z)$ satisfies
    \begin{equation}
        \inner{ \nabla V_{R(\varepsilon,n,m), p}(z)}{C_{n}(\alpha, \omega) z} \leq -\beta V_{R(\varepsilon,n,m), p}(z),
    \end{equation}
    with $\beta \coloneqq  p(\zeta - \varepsilon) > 0$.
\end{lemma}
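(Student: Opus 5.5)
The plan is to reduce the statement to Lemma~\ref{lem:Gamma}. I would exhibit an explicit matrix $\Gamma$ with $\Gamma R(\varepsilon,n,m) = R(\varepsilon,n,m)\,C_n(\alpha,\omega)$ and $\mu_p(\Gamma) \le -(\zeta-\varepsilon)$. The proof of Lemma~\ref{lem:Gamma} shows that $\inner{\nabla V_{R,p}(z)}{C_n z} \le \mu_p(\Gamma)\norm{Rz}_p^p$. Since $\norm{Rz}_p^p = pV_{R,p}(z)$, this gives $\inner{\nabla V_{R,p}(z)}{C_n z} \le -p(\zeta-\varepsilon)V_{R,p}(z)$, which is the claim with $\beta = p(\zeta-\varepsilon)$.

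\emph{Step 1: the rotation block.} Write $\delta \coloneqq \pi/m$ and $q_i \coloneqq Q(m)_{i:}^\top = [\cos\theta_i,\,\sin\theta_i]^\top$ with $\theta_i = (i-1)\delta$. Note that $m \ge 2$ is forced, since $\tan(\pi/2)$ is undefined. A direct computation gives $C(\alpha,\omega)^\top q_i = -\alpha q_i + \omega q_i^\perp$, where $q_i^\perp = [-\sin\theta_i,\,\cos\theta_i]^\top$. Because $q_{i+1} = \cos\delta\, q_i + \sin\delta\, q_i^\perp$, we can write
\[
\omega q_i^\perp = \frac{\omega}{\sin\delta}\, q_{i+1} - \omega\cot\delta\, q_i .
\]
The convention $q_{m+1} = -q_1$ holds because $\theta_{m+1} = \pi$. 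Hence $Q(m)\,C(\alpha,\omega) = \Gamma_0\, Q(m)$, where $\Gamma_0$ has the following entries:
\begin{itemize}
\item diagonal entries $-\alpha - \omega\cot\delta$;
\item entry $(i,i+1)$ equal to $\omega/\sin\delta$ for $i<m$;
\item entry $(m,1)$ equal to $-\omega/\sin\delta$.
\end{itemize}
Every row and every column of $\Metz(\Gamma_0)$ contains exactly one off-diagonal entry, of magnitude $\omega/\sin\delta$. Therefore every row sum and every column sum equals
\[
-\alpha + \omega\,\frac{1-\cos\delta}{\sin\delta} = -\alpha + \omega\tan(\delta/2) = -\zeta .
\]
This gives $\mu_\infty(\Gamma_0) = \mu_1(\Gamma_0) = -\zeta$. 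Lemma~\ref{lem:p_interpolation}, applied with $p_1=1$ and $p_2=\infty$, then yields $\mu_p(\Gamma_0) \le -\zeta$ for every $p\in(1,\infty)$.

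\emph{Step 2: the nilpotent coupling.} We have $E\mathcal{N}_n E^{-1} = \varepsilon\mathcal{N}_n$, since the entry $(i,i+1)$ is scaled by $\varepsilon^{n-i}/\varepsilon^{n-i-1}$. Define $\Gamma \coloneqq I_n\otimes\Gamma_0 + \varepsilon\,\mathcal{N}_n\otimes I_m$. By the mixed-product rule,
\[
R\,C_n = E\otimes Q C + E\mathcal{N}_n\otimes Q = E\otimes \Gamma_0 Q + \varepsilon\,\mathcal{N}_n E\otimes Q = \Gamma R .
\]
The matrix $R$ has full column rank because $E$ is nonsingular and $Q(m)$ has rank $2$ for $m\ge2$. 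We then use two standard properties of matrix measures. First, subadditivity gives $\mu_p(X+Y)\le\mu_p(X)+\mu_p(Y)$. Second, $\mu_p(Y)\le\norm{Y}_p$. For the block-diagonal term, $\mu_p(I_n\otimes\Gamma_0) = \mu_p(\Gamma_0)$, since the $p$-norm of $I+h(I_n\otimes\Gamma_0)$ is the maximum over identical blocks. For the coupling term, $\norm{\mathcal{N}_n\otimes I_m}_p = 1$, because it is a coordinate shift. Combining these gives $\mu_p(\Gamma)\le -\zeta+\varepsilon<0$, and Lemma~\ref{lem:Gamma} concludes as described above.

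The main obstacle is Step 1. The naive symmetric choice $\omega q_i^\perp = \frac{\omega}{2\sin\delta}(q_{i+1}-q_{i-1})$ only gives the weaker rate $-\alpha+\omega/\sin\delta$. The one-sided representation, using $q_{i+1}$ and $q_i$, is what produces exactly $\tan(\pi/(2m))$. It also makes row and column sums coincide, which is what allows all $p$ to be treated at once via the interpolation lemma.
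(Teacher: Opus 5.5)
Your proof is correct. It shares the first ingredient with the paper but handles the nilpotent coupling differently.

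Your $\Gamma_0$ is exactly the paper's skew-circulant $W(\alpha,\omega)$, since $c=-\zeta-d=-\alpha-\omega\cot(\pi/m)$. The bound $\mu_p(\Gamma_0)\le-\zeta$, obtained from $\mu_1=\mu_\infty=-\zeta$ and Lemma~\ref{lem:p_interpolation}, is precisely the paper's argument. The only difference there is that you derive $Q(m)C(\alpha,\omega)=W Q(m)$ explicitly, whereas the paper cites Polanski.

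For the coupling, the paper works blockwise. With $y_i=\varepsilon^{n-i}Q(m)z_i$, it writes $\dot y_i=Wy_i+\varepsilon y_{i+1}$ and bounds $\tfrac{d}{dt}\norm{y_i}_p\le\mu_p(W)\norm{y_i}_p+\varepsilon\norm{y_{i+1}}_p$. It then absorbs the cross terms $\norm{y_i}_p^{p-1}\norm{y_{i+1}}_p$ by Young's inequality.

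You instead build one certificate $\Gamma=I_n\otimes\Gamma_0+\varepsilon\,\mathcal{N}_n\otimes I_m$ satisfying $\Gamma R=RC_n(\alpha,\omega)$. You bound $\mu_p(\Gamma)\le-\zeta+\varepsilon$ by subadditivity and $\mu_p(Y)\le\norm{Y}_p$, and then invoke Lemma~\ref{lem:Gamma}. Both routes give the same rate $\zeta-\varepsilon$.

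Your version has three advantages. It avoids differentiating the individual block norms $\norm{y_i}_p$, which are nonsmooth where $y_i=0$, and it avoids Young's inequality. It also puts the Jordan-block construction in the same $\Gamma R=RA$ matrix-measure framework as Section~\ref{sec:construction}. Done for every Jordan block, it would even make Lemma~\ref{lemma:Lyap_Aggregation} immediate, because $\mu_p$ of a block-diagonal matrix is the maximum of the blocks' measures. The paper's blockwise version, by contrast, never needs the measure of a Kronecker-structured matrix.

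Two small remarks:
\begin{itemize}
\item $\norm{\mathcal{N}_n\otimes I_m}_p=1$ holds only for $n\ge2$; it is $0$ when $n=1$. You only need the bound $\le1$, so this does no harm.
\item You do not need to reach into the proof of Lemma~\ref{lem:Gamma}. Its statement, with $S=(p\abs{\mu_p(\Gamma)})^{1/p}R$ and $\abs{\mu_p(\Gamma)}\ge\zeta-\varepsilon$, already gives the bound $-p(\zeta-\varepsilon)V_{R,p}$.
\end{itemize}
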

\medskip

\begin{proof}  
    Partition the state $z$ into $n$ substates $z_1, \dots, z_n$ with $z_i\in\R^2$ for all $i\in\N_n$.
    Then, we have
    \begin{align}
        \dot{z}_i &= C(\alpha, \omega) z_i + z_{i+1} \quad \text{for } i\in\N_{n}, \label{eq:dotx_i}
    \end{align}
    where $z_{n+1}\coloneqq  0$.
    Define $y\coloneqq  R(\varepsilon,n,m)z \in \R^{mn}$ and partition $y$ into $n$ substates $y_1, \dots, y_n$ with $y_i\in\R^m$ for all $i\in\N_n$.
    By the block diagonal form of $R(\varepsilon,n,m)$, it results
    \begin{align}
        y_i &= \varepsilon^{n-i} Q(m) z_i \quad \forall i\in\N_n. \label{eq:z_i}
    \end{align}
    Differentiating \eqref{eq:z_i} and using \eqref{eq:dotx_i}, we have
    \begin{align}
        \label{eq:y_dotQ}
        \dot y_i &= \varepsilon^{n-i} Q(m) \dot z_i = \varepsilon^{n-i} Q(m) (C(\alpha, \omega) z_i + z_{i+1})
    \end{align}
 for all $i\in\N_n$. 
    From \cite{polanski1995infinity}, we have 
    \begin{equation}
        \label{eq:QCWQ}
        Q(m) C(\alpha, \omega) = W(\alpha, \omega) Q(m),    
    \end{equation}
    where $W(\alpha, \omega) \in \R^{m\times m}$ is defined as the skew-circulant matrix
    \begin{equation}
        W(\alpha, \omega)\coloneqq  \scalebox{0.8}{$\begin{bmatrix}
            c & d & \cdots & 0\\
            0 & c & \ddots & 0\\
            \vdots & \ddots & \ddots & d\\
            -d & \cdots & 0 & c
        \end{bmatrix}$},
    \end{equation}
    with $c\coloneqq  -\zeta - d$ and $d\coloneqq  \frac{\omega}{\sin(\frac{\pi}{m})}$.
    Replacing \eqref{eq:QCWQ} in \eqref{eq:y_dotQ} it results
    \begin{align*}
        \dot y_i &= \varepsilon^{n-i} (W(\alpha, \omega) Q(m) z_i + Q(m) z_{i+1}) \nonumber\\
         &= W(\alpha, \omega) y_i + \varepsilon y_{i+1}  \qquad\qquad \forall i\in\N_{n},
    \end{align*}
    where $y_{n+1}\coloneqq 0$.
    Then, by properties of the matrix measure $\mu_p$ \cite{soderlind2006logarithmic}, it follows that
    \begin{align}
        \frac{d}{dt} \norm{y_i}_p &\leq \mu_p(W(\alpha, \omega)) \norm{y_i}_p + \norm{\varepsilon y_{i+1}}_p  \; \forall i\in\N_{n}.
    \end{align}
    Define $\dot V(z) \coloneqq \inner{ \nabla V_{R(\varepsilon,n,m), p}(z)}{C_{n}(\alpha, \omega) z}$. Thus, we have
    \begin{align}
        &\dot{V}(z) = \frac{d}{dt} \left[ \frac{1}{p} \sum_{i=1}^n \norm{y_i}_p^{p} \right] = \sum_{i=1}^n \norm{y_i}_p^{p-1} \frac{d}{dt} \norm{y_i}_p \nonumber\\
        &\leq \sum_{i=1}^n \norm{y_i}_p^{p-1} (\mu_p(W(\alpha, \omega)) \norm{y_i}_p + \norm{\varepsilon y_{i+1}}_p) \nonumber\\
        &= \mu_p(W(\alpha, \omega)) \norm{y}_p^p + \varepsilon \sum_{i=1}^{n-1} \norm{y_i}_p^{p-1} \norm{y_{i+1}}_p. \label{eq:V_R_complex}
    \end{align}  
    Using Young's Inequality, we have
    \begin{align}
        &\sum_{i=1}^{n-1} \norm{y_i}_p^{p-1} \norm{y_{i+1}}_p \leq \sum_{i=1}^{n-1} \frac{p-1}{p} \norm{y_i}_p^p + \frac{1}{p} \norm{z_{i+1}}_p^p \nonumber\\
        &= \tfrac{p-1}{p} \norm{y_1}_p^p + \sum_{i=2}^{n-1}\norm{y_i}_p^p + \frac{1}{p} \norm{y_n}_p^p \leq \sum_{i=1}^{n}\norm{y_i}_p^p, \label{eq:young}
    \end{align}
    where the last inequality follows from the fact that $p>1$.
    Using \eqref{eq:young} in \eqref{eq:V_R_complex}, it results
    \begin{align}
        \label{eq:V_R_pre}
        \dot{V}(z) &\leq \left(\mu_p(W(\alpha, \omega)) + \varepsilon \right) \norm{y}_p^p.
    \end{align}

    On the other hand, given that $m\geq 2$, it follows that $d>0$. Then,
     $\mu_\infty(W(\alpha, \omega)) = \mu_1(W(\alpha, \omega)) = c + \abs{d}=c+d = - \zeta.$
    By the previous equation
    and using Lemma~\ref{lem:p_interpolation} with $p_1\coloneqq 1$ and $p_2\coloneqq \infty$, for all $p \in (1, \infty)$ we have
    \begin{align}
        \label{eq:mu_p_zeta}
        \mu_p(W(\alpha, \omega)) &\leq \max(\mu_1(W(\alpha, \omega)), \mu_\infty(W(\alpha, \omega))) \nonumber\\
        &= -\zeta.
    \end{align}
    Using \eqref{eq:mu_p_zeta} in \eqref{eq:V_R_pre}, it follows that
    \begin{equation}
        \dot{V}(z) \leq -(\zeta - \varepsilon) \norm{y}_p^p  = - \beta V_{R(\varepsilon,n,m), p}(y),
    \end{equation}
    and the proof is complete.
\end{proof}

\medskip
\begin{lemma}
\label{lem:Jordan_Real_Block}
Let $J_n(s) \in \R^{n\times n}$ be a matrix of the form
 \begin{equation}
        J_n(s) \coloneqq  
        s I_n + \mathcal{N}_n
        \label{eq:block_real}
    \end{equation}
 with $s < 0$.    
For any $\varepsilon \in (0, |s|)$, define
$R(\varepsilon,n)\coloneqq E(\varepsilon,n)$ with $E(\varepsilon,n)$ as defined in \eqref{eq:E}.
Then, for any $p \in (1, \infty)$, the function
        $V_{R(\varepsilon,n), p}(z) \coloneqq  \frac{1}{p} \norm{R(\varepsilon,n) z}_p^p$
    satisfies
    \begin{equation}
        \inner{{V}_{R(\varepsilon,n), p}(z)}{J_n(s) z} \leq -\beta V_{R(\varepsilon,n), p}(z),
        \label{eq:global_decay_real}
    \end{equation}
    with $\beta \coloneqq  p(|s| - \varepsilon) > 0$.
\end{lemma}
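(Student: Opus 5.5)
The plan mirrors the proof of Lemma~\ref{lem:Jordan_Complex_Block}, with the rotation-handling matrix $Q(m)$ replaced by the scalar $1$. Note that the displayed inequality \eqref{eq:global_decay_real} should be read with $\nabla V_{R(\varepsilon,n),p}(z)$ in the first slot. Also, as in the complex case, the natural constant coming out of the argument is $\zeta-\varepsilon$ multiplying $\norm{y}_p^p$. Since $\norm{y}_p^p = pV$, this gives exactly $\beta = p(|s|-\varepsilon)$.

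First, write $z=(z_1,\dots,z_n)$ with scalar components. The system $\dot z = J_n(s)z$ reads $\dot z_i = s z_i + z_{i+1}$, with $z_{n+1}\coloneqq 0$. Set $y\coloneqq E(\varepsilon,n)z$, so that $y_i=\varepsilon^{n-i}z_i$. Then
\[
\dot y_i = \varepsilon^{n-i}(s z_i + z_{i+1}) = s y_i + \varepsilon y_{i+1},
\]
with $y_{n+1}\coloneqq 0$. Equivalently, $E J_n(s) = (sI_n+\varepsilon\mathcal N_n)E$, which is the similarity relation $\Gamma R = RA$ with $\Gamma = sI_n+\varepsilon\mathcal N_n$.

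There are two routes from here. The shortest is Lemma~\ref{lem:Gamma}: it gives
\[
\inner{\nabla V_{R,p}(z)}{J_n(s)z}\le \mu_p(\Gamma)\norm{y}_p^p .
\]
Then $\mu_1(\Gamma)=\mu_\infty(\Gamma)=s+\varepsilon$, because every row sum and every column sum of $\Metz(\Gamma)$ is at most $s+\varepsilon$. By Lemma~\ref{lem:p_interpolation} with $p_1=1$ and $p_2=\infty$, this yields $\mu_p(\Gamma)\le s+\varepsilon = -(|s|-\varepsilon)<0$.

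Alternatively, I can compute directly, as in the complex case:
\[
\dot V = \sum_i \sigpow{y_i}{p-1}\dot y_i = s\norm{y}_p^p + \varepsilon\sum_{i=1}^{n-1}\sigpow{y_i}{p-1}y_{i+1}.
\]
Bound the cross sum by $\sum_i|y_i|^{p-1}|y_{i+1}|$. Young's inequality gives
\[
|y_i|^{p-1}|y_{i+1}| \le \tfrac{p-1}{p}|y_i|^p + \tfrac1p|y_{i+1}|^p .
\]
Summing telescopically, each $|y_j|^p$ receives total weight at most $1$, so the cross sum is at most $\norm{y}_p^p$. Hence
\[
\dot V\le (s+\varepsilon)\norm{y}_p^p = -(|s|-\varepsilon)\,p\,V_{R(\varepsilon,n),p}(z).
\]

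It remains to note that $R(\varepsilon,n)$ is diagonal with positive entries, hence invertible and full column rank. This bookkeeping step is where I'd be careful. The sign-power term $\sigpow{y_i}{p-1}y_{i+1}$ must be bounded in absolute value before applying Young's inequality. I also need to confirm that the coefficient weights $\tfrac{p-1}{p}$ and $\tfrac1p$ add to at most $1$ for each index. Both are routine, so I expect no real obstacle.
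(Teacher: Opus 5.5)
Your proposal is correct, and both routes work. Route 1 has the same skeleton as the paper's proof. The scaling $y=E(\varepsilon,n)z$ turns $J_n(s)$ into $\Gamma=sI_n+\varepsilon\mathcal{N}_n$, and the Lyapunov derivative is then bounded by $\mu_p(\Gamma)\norm{y}_p^p$. The paper does this directly through $\tfrac{d}{dt}\norm{y}_p\le\mu_p(\Gamma)\norm{y}_p$; you route it through Lemma~\ref{lem:Gamma}.

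The real difference is in how $\mu_p(\Gamma)$ is bounded. The paper uses only elementary properties of the logarithmic norm: $\mu_p(sI_n+\varepsilon\mathcal{N}_n)\le s+\varepsilon\mu_p(\mathcal{N}_n)\le s+\varepsilon\norm{\mathcal{N}_n}_p=s+\varepsilon$, since the shift matrix has induced $p$-norm one. You instead go through $\mu_1$, $\mu_\infty$ and the Perov--Kostrub convexity-in-$1/p$ result behind Lemma~\ref{lem:p_interpolation}. That is heavier machinery than this case needs. Also, strictly, $\mu_1(\Gamma)=\mu_\infty(\Gamma)=s+\varepsilon$ only for $n\ge 2$; for $n=1$ both equal $s$. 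You only need the upper bound, so nothing breaks.

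Your Route 2 is a genuinely different argument. It computes $\inner{\nabla V_{R(\varepsilon,n),p}(z)}{J_n(s)z}=\inner{\sigpow{y}{p-1}}{\Gamma y}$ pointwise. It then controls the superdiagonal coupling by Young's inequality, using the same telescoping weights the paper applies between blocks in Lemma~\ref{lem:Jordan_Complex_Block}.

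Route 2 has several advantages:
\begin{itemize}
\item it needs no logarithmic norms at all;
\item it holds at $z=0$ without having to read $\tfrac{d}{dt}\norm{y}_p$ as a Dini derivative where the norm is not differentiable;
\item it is the most self-contained of the three arguments.
\end{itemize}
The paper's version, in turn, is shorter and stays within the matrix-measure viewpoint used throughout the rest of the paper.
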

\medskip

\begin{proof}
    Define $y \coloneqq  R(\varepsilon,n) z$. 
    Then, we have
        $\dot{y} = R(\varepsilon,n) \dot{z} = R(\varepsilon,n) J_n(s) R(\varepsilon,n)^{-1} y$
        with
    \begin{equation*}
         R(\varepsilon,n) J_n(s) R(\varepsilon,n)^{-1} = 
        s I_n + \varepsilon \mathcal{N}_n.
    \end{equation*}
    Using the properties of the matrix measure $\mu_p$ \cite{desoer1972measure}, it follows that
    \begin{align}
        \frac{d}{dt} \norm{y}_p &\leq \mu_p(s I_n + \varepsilon \mathcal{N}_n) \norm{y}_p  \leq \left(s + \varepsilon \mu_p(\mathcal{N}_n)\right) \norm{y}_p \nonumber\\
        & \leq \left(s + \varepsilon \norm{\mathcal{N}_n}_p\right) \norm{y}_p  = - \left(\abs{s} - \varepsilon\right) \norm{y}_p, \label{eq:derivativey}
    \end{align}
    where we have used the fact that $\norm{\mathcal{N}_n}_p=1$ for all $p \in (1,\infty)$.
    Define $\dot V(z)\coloneqq\inner{{V}_{R(\varepsilon,n), p}(z)}{J_n(s) z}$.
    Considering \eqref{eq:derivativey}, we have
    \begin{align}
        &\dot{V}(z) = \norm{y}_p^{p-1} \frac{d}{dt}\norm{y}_p \nonumber \leq \norm{y}_p^{p-1} \left(-(|s| - \varepsilon) \norm{y}_p \right)\nonumber \\
        &\qquad= -(|s| - \varepsilon) \norm{y}^p_p = -p(|s| - \varepsilon) V_{R(\varepsilon,n), p}(z).
    \end{align}
    Since $\varepsilon < |s|$, the rate $\beta \coloneqq  p(|s| - \varepsilon)$ is strictly positive.
\end{proof}


\section{Outer semicontinuity of \texorpdfstring{$G$}{G}}
\label{app:g_semicont}

Take a sequence $z_i = (x_i,\sigma_i) \in \R^n \times \N_N$ converging to $z_\infty = (x_\infty,\sigma_\infty)$. Take $(y_i,\varsigma_i) \in G(x_i,\sigma_i)$ converging to $(y_\infty,\varsigma_\infty)$. We have to prove that $(y_\infty,\varsigma_\infty) \in G(x_\infty,\sigma_\infty)$. 
For every $x\in\R^n$, consider
    $g_k(x) := \inner{\nabla V_{R,p}(x)}{f_k (x)}$,  $\underline{g}(x) := \underset{k \in \N_N}{\min} \, g_k(x)$ 
    and
    $S(x) := \underset{k \in \N_N}{\argmin} \, g_k(x)$.

By~\eqref{eq:maps:g}, it is clear that $y_\infty = x_\infty$. Suppose for a contradiction that $\varsigma_\infty \notin S(x_\infty)$. Since $\varsigma_i\in\N_N$, then there exists $I\in\N$ such that $\varsigma_i = \varsigma_\infty$ for all $i\ge I$. 
The fact that $\varsigma_\infty \notin S(x_\infty)$ implies that $\underline{g}(x_\infty) < g_{\varsigma_\infty}(x_\infty)$. Let $\varepsilon = (g_{\varsigma_\infty}(x_\infty) - \underline{g}(x_\infty))/2 > 0$. The function $\underline{g}$ is continuous because it is the minimum over a finite number of continuous functions. Then, there exists $\delta > 0$ such that $|\underline{g}(x_i) - \underline{g}(x_\infty)| < \varepsilon$ whenever $|x_i - x_\infty| < \delta$, which in turn happens for all $i \ge I_2$, for some $I \le I_2 \in \N$. 
For all $i\ge I_2$, it happens that $\underline{g}(x_i) = g_{\varsigma_i}(x_i) = g_{\varsigma_\infty}(x_i)$. Then, $|g_{\varsigma_\infty}(x_i) - \underline{g}(x_\infty)| < \varepsilon$ holds for all $i\ge I_2$, which contradicts the definition of $\varepsilon$. It follows that $\varsigma_\infty \in S(x_\infty)$ and hence $G$ is outer semicontinuous.

\section{Proofs of Theorems~\ref{thm:eps} and \ref{thm:time_reg}}
\label{app:thm:reg}

This appendix provides the proofs for the space- (Theorem~\ref{thm:eps}) and time-regularized (Theorem~\ref{thm:time_reg}) systems. 
Since both strategies allow the system state to evolve in regions where
$V_{R,p}$ may increase, 
the stability analysis relies on bounding the variation of its time derivative along the subsystems trajectories. 
To this end, we first establish the required bounds through a series of instrumental lemmas.

\subsection{Instrumental Results}

The following results characterize the local continuity of $\nabla V_{R,p}$, establish bounds on the system's flow, and introduce necessary algebraic inequalities.
Let $q \coloneq p/(p-1)$ be the conjugate exponent of $p$. 
First, we establish a local Hölder continuity property for the gradient of $V_{R,p}$.

\medskip
\begin{lemma} \label{lem:holder_grad}
Let $R\in\R^{r\times n}$ full column rank and $p \in (1, \infty)$, and $\alpha \coloneqq \min\{1, p-1\} > 0$. For any $M > 0$, 
\begin{equation}
    \norm{\nabla V_{R,p}(x) - \nabla V_{R,p}(y)}_q \leq L_\nabla(M)  \norm{x - y}_p^\alpha,
\end{equation}
for all $x$, $y \in \R^n$ satisfying $\abs{R_{i:}x}$, $\abs{R_{i:}y}\leq M$ for all $i\in\N_r$, where
\begin{equation}
    L_\nabla(M) \!\coloneqq\!\begin{cases}
         2^{2-p} \norm{R}_p^{p} \!\!\!&\text{if }p\in(1,2], \\
         (p - 1) r^\frac{p-2}{p} \norm{R}_p^2 M^{p-2}, \!\!\!&\text{if }p\in(2,\infty).
    \end{cases}
    \label{eq:L_nabla}
\end{equation}
\end{lemma}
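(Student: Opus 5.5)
The plan is to exploit the explicit structure of the gradient \eqref{eq:gradient}. Writing $\varphi(u)\coloneqq\sigpow{u}{p-1}$ componentwise, we have $\nabla V_{R,p}(x)-\nabla V_{R,p}(y)=R^\top\bigl(\varphi(Rx)-\varphi(Ry)\bigr)$. Hence
\[
\norm{\nabla V_{R,p}(x)-\nabla V_{R,p}(y)}_q\le \norm{R^\top}_q\,\norm{\varphi(Rx)-\varphi(Ry)}_q .
\]
By duality of induced norms ($q$ conjugate to $p$), $\norm{R^\top}_q=\norm{R}_p$. It therefore remains to bound $\norm{\varphi(a)-\varphi(b)}_q$ with $a=Rx$ and $b=Ry$. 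Here $\abs{a_i},\abs{b_i}\le M$ by hypothesis. This reduces everything to a scalar estimate on the signed power $t\mapsto\sigpow{t}{p-1}$, and I will split into the two cases of \eqref{eq:L_nabla}.

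\emph{Case $p\in(1,2]$}, so $\theta\coloneqq p-1\in(0,1]$ and $\alpha=p-1$. The key scalar inequality is $\abs{\sigpow{s}{\theta}-\sigpow{t}{\theta}}\le 2^{1-\theta}\abs{s-t}^\theta$ for all $s,t\in\R$.
\begin{itemize}
\item If $s,t$ have the same sign, subadditivity of the concave map $u\mapsto u^\theta$ on $\R_{\ge0}$ gives the bound with constant $1$.
\item If they have opposite signs, the left side is $\abs{s}^\theta+\abs{t}^\theta$. Concavity (power-mean) gives $\abs{s}^\theta+\abs{t}^\theta\le 2^{1-\theta}(\abs{s}+\abs{t})^\theta=2^{1-\theta}\abs{s-t}^\theta$.
\end{itemize}
Since $1-\theta=2-p$, this yields the constant $2^{2-p}$. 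Summing the $q$-th powers and using $(p-1)q=p$, I get
\[
\norm{\varphi(a)-\varphi(b)}_q\le 2^{2-p}\norm{a-b}_p^{p/q}=2^{2-p}\norm{R(x-y)}_p^{p-1}\le 2^{2-p}\norm{R}_p^{p-1}\norm{x-y}_p^{p-1}.
\]
Combining with $\norm{R^\top}_q=\norm{R}_p$ gives $L_\nabla=2^{2-p}\norm{R}_p^p$ with exponent $\alpha=p-1$. In this case the bound holds even without using $M$.

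\emph{Case $p>2$}, so $\alpha=1$. Here $\varphi$ is $C^1$ with derivative $(p-1)\abs{t}^{p-2}$. By the mean value theorem on $[-M,M]$, $\abs{\varphi(a_i)-\varphi(b_i)}\le (p-1)M^{p-2}\abs{a_i-b_i}$, hence $\norm{\varphi(a)-\varphi(b)}_q\le (p-1)M^{p-2}\norm{a-b}_q$. Since $q<p$, the norm comparison on $\R^r$ gives $\norm{v}_q\le r^{1/q-1/p}\norm{v}_p=r^{(p-2)/p}\norm{v}_p$. Then $\norm{R(x-y)}_p\le\norm{R}_p\norm{x-y}_p$. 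Multiplying by $\norm{R^\top}_q=\norm{R}_p$ yields exactly $(p-1)r^{(p-2)/p}\norm{R}_p^2M^{p-2}$.

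The only genuinely delicate step is the scalar Hölder estimate for the signed power in the first case, specifically the opposite-sign subcase and getting the sharp constant $2^{2-p}$. Everything else is bookkeeping with dual norms and the exponent identity $(p-1)q=p$.
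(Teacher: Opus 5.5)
Your proposal is correct and follows the paper's proof essentially step by step. Both factor the difference through $R^\top$ with $\norm{R^\top}_q=\norm{R}_p$, use a componentwise Hölder estimate with $(p-1)q=p$ for $p\in(1,2]$, and use the mean value theorem plus $\norm{\cdot}_q\le r^{(p-2)/p}\norm{\cdot}_p$ for $p>2$. The only difference is that you derive the scalar bound $\abs{\sigpow{s}{p-1}-\sigpow{t}{p-1}}\le 2^{2-p}\abs{s-t}^{p-1}$ yourself (subadditivity for equal signs, concavity for opposite signs), whereas the paper cites it from Lindqvist's notes.
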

\medskip

\begin{proof}
We first show the local Hölder continuity of $\sigpow{\boldsymbol{\cdot}}{p-1}$. 
For $p\in(1,2]$, from~\cite[Ch.~12]{lindqvist2019notes}, we have
$
\abs{\sigpow{s_1}{p-1}-\sigpow{s_2}{p-1}}\leq 2^{2-p}\abs{s_1-s_2}^{p-1}
$
for all $s_1$, $s_2\in\R$.
Applying this component-wise to vectors $v$, $w\in\R^r$, and using $(p-1)q=p$ and $\alpha = p-1 = \frac{p}{q}$, we obtain
\begin{align}
    &\norm{\sigpow{v}{p-1}-\sigpow{w}{p-1}}_q
        \leq 2^{2-p} \Big(\sum_{i\in\N_r}\abs{v_i-w_i}^{(p-1)q}\Big)^{1/q} \nonumber\\
        &\qquad\qquad\quad = 2^{2-p}\norm{v-w}_p^{\frac{p}{q}}
        = \frac{L_\nabla(M)}{\norm{R}_p^{1+\alpha}}\norm{v-w}_p^\alpha \label{eq:LHolder:1}
\end{align}
for all $v$, $w\in\R^r$ such that $v_i,w_i\in[-M,M]$ for all $i\in\N_r$.

For $p > 2$, we have $\alpha = 1$. Since $\sigpow{s}{p-1}=|s|^{p-2}s$ for all $s\in\R$, 
it follows that $\frac{d}{ds}\sigpow{s}{p-1}=(p-1)|s|^{p-2}\le (p-1)M^{p-2}$ for all $s\in[-M,M]$. Hence, by the mean value theorem,
$\abs{\sigpow{s_1}{p-1}-\sigpow{s_2}{p-1}} \leq (p-1)M^{p-2} \abs{s_1-s_2}$
for all $s_1,s_2\in[-M,M]$.
Applying the previous inequality component-wise to vectors $v$, $w\in\R^r$, and using $\alpha=1$ and
$\norm{\boldsymbol{\cdot}}_q\le r^{\frac{1}{q}-\frac{1}{p}}\norm{\boldsymbol{\cdot}}_p
= r^{\frac{p-2}{p}}\norm{\boldsymbol{\cdot}}_p$, since $p>q>1$, we have
\begin{align}
    &\!\!\! \norm{\sigpow{v}{p-1}-\sigpow{w}{p-1}}_q
            \leq (p-1)M^{p-2}\norm{v-w}_q \nonumber\\
        &\!\!\!\leq \!(p\!-\!1)M^{p-2}r^\frac{p-2}{p}\norm{v-w}_p^\alpha
        = \frac{L_\nabla(M)}{\norm{R}_p^{1+\alpha}} \norm{v-w}_p^\alpha, \label{eq:LHolder:2}
\end{align}
for all $v$, $w\in\R^r$ such that $v_i,w_i\in[-M,M]$ for all $i\in\N_r$.
%
Considering $\nabla V_{R,p}(x)=R^\top\sigpow{Rx}{p-1}$ we have 
\begin{align}
    &\norm{\nabla V_{R,p}(x)-\nabla V_{R,p}(y)}_q = \norm{R^\top \left(\sigpow{Rx}{p-1} - \sigpow{Ry}{p-1}\right)}_q \nonumber\\
        &\leq 
        \! \norm{R}_p \norm{\sigpow{Rx}{p-1} \! - \! \sigpow{Ry}{p-1}}_q \! \label{eq:RTR} \\
        & \le  \norm{R}_p \frac{L_\nabla(M)}{\norm{R}_p^{1+\alpha}}\norm{R(x-y)}_p^\alpha \leq \! L_\nabla(M) \! \norm{x\! -\! y}_p^\alpha, \label{eq:inequality} 
\end{align}
where \eqref{eq:RTR} follows from the induced matrix norm inequality and the identity $\norm{R^\top}_q=\norm{R}_p$
\footnote{
    By duality of $\norm{\boldsymbol{\cdot}}_p$ and $\norm{\boldsymbol{\cdot}}_q$, we have $\norm{z}_p=\max_{\norm{y}_q=1} y^T z$ (see \cite{horn2012matrix}).
    Then, it follows that 
    $\norm{R}_p = \max_{\norm{x}_p=1} \norm{Rx}_p
    = \max_{\norm{x}_p=1}\max_{\norm{y}_q=1} y^\top R x
    = \max_{\norm{y}_q=1}\max_{\norm{x}_p=1} x^\top R^\top y
    = \max_{\norm{y}_q=1}\norm{R^\top y}_q
    = \norm{R^\top}_q$.
    }
, while \eqref{eq:inequality} follows from \eqref{eq:LHolder:1} and \eqref{eq:LHolder:2} with $v=Rx$ and $w=Ry$.
\end{proof}
\medskip

The following lemma bounds the distance between the system's trajectories and their initial conditions.

\medskip
\begin{lemma} \label{lem:flow_bound}
Let $\psi(t)$ be the solution to the subsystem $\dot{\psi} = A \psi + b$ with $\psi(0) = x$. For any $t \geq 0$, we have
\begin{equation}
    \norm{\psi(t) - x}_p \leq \left(\norm{A}_p \norm{x}_p + \norm{b}_p\right) t e^{\norm{A}_p t}. \label{eq:flow_dev}
\end{equation}
\end{lemma}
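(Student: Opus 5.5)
We prove the bound by writing $\psi(t)-x$ as an integral of the vector field and then applying Grönwall's inequality to the deviation itself.

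Define the deviation $e(t) \coloneqq \psi(t) - x$. Since $\psi$ is the (unique, globally defined) solution of the affine ODE, we have
\begin{equation*}
    e(t) = \int_0^t \left(A\psi(s) + b\right) \D s = \int_0^t \left(A e(s) + Ax + b\right) \D s .
\end{equation*}
Taking $p$-norms and using the triangle inequality for integrals together with the induced-norm bound $\norm{A v}_p \le \norm{A}_p \norm{v}_p$ gives
\begin{equation*}
    \norm{e(t)}_p \le c\, t + \norm{A}_p \int_0^t \norm{e(s)}_p \D s, \qquad c \coloneqq \norm{A}_p\norm{x}_p + \norm{b}_p .
\end{equation*}

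We then apply the integral form of Grönwall's inequality with the nondecreasing forcing term $c\,t$. This yields $\norm{e(t)}_p \le c\, t\, e^{\norm{A}_p t}$, which is exactly \eqref{eq:flow_dev}.

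The only point needing care is the version of Grönwall used. The standard statement, $u(t)\le a(t)+L\int_0^t u$ with $a$ nondecreasing implies $u(t)\le a(t)e^{Lt}$, applies directly here because $a(t)=ct$ is nondecreasing. The continuity of $t\mapsto\norm{e(t)}_p$ that this statement requires is immediate.

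An alternative is the explicit variation-of-constants route, which produces the same bound. Write
\begin{equation*}
    e(t) = (e^{At}-I)x + \int_0^t e^{As}b \,\D s .
\end{equation*}
Then use $\norm{e^{At}-I}_p \le \sum_{k\ge1} \frac{(\norm{A}_p t)^k}{k!} \le \norm{A}_p t e^{\norm{A}_p t}$ and $\norm{\int_0^t e^{As}b\,\D s}_p \le t e^{\norm{A}_p t}\norm{b}_p$. Adding the two estimates gives \eqref{eq:flow_dev} again. No step is a real obstacle; the Grönwall version is the only detail to state precisely.
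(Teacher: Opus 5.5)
Your proposal is correct and follows essentially the paper's own argument: integrate the deviation equation $\dot\varsigma = A\varsigma + Ax + b$ with $\varsigma(0)=0$, take $p$-norms, and apply Gr\"onwall's inequality with the nondecreasing forcing term $(\norm{A}_p\norm{x}_p+\norm{b}_p)\,t$. Your variation-of-constants alternative is also valid, since $e^{a}-1\le a e^{a}$ for $a\ge 0$, but the paper does not need it.
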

\medskip

\begin{proof}
Define $\varsigma(t) \coloneqq \psi(t) - x$,
so
    $\dot{\varsigma}(t) = A \psi(t) + b = A \varsigma(t) + A x + b$
with $\varsigma(0)=0$.
Integrating and taking the $p$-norm, we obtain
$\norm{\varsigma(t)}_p  \leq \norm{\int_0^t \left(A \varsigma(s) + A x + b \right)\, \D s }_p
\leq \left(\norm{A}_p \norm{x}_p + \norm{b}_p\right) t + \norm{A}_p \int_0^t \norm{\varsigma(s)}_p \, \D s$.
Then, the result follows from Grönwall's inequality.
\end{proof}
\medskip

The following auxiliary inequality is also needed for the subsequent proofs.

\medskip
\begin{lemma} \label{lem:C_p}
Let $a_1, a_2\geq0$ and $p>0$. Then, 
\begin{equation}
    (a_1+a_2)^p \leq C_p (a_1^p + a_2^p),
\end{equation}
with $C_p \coloneqq \max\{1,\, 2^{p-1}\}$.
\end{lemma}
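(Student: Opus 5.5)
The plan is to split into the two regimes $p\in(0,1]$ and $p>1$, since the constant $C_p=\max\{1,2^{p-1}\}$ switches form exactly at $p=1$. If $a_1=a_2=0$ the claim is trivial, so I will assume $a_1+a_2>0$ throughout. By homogeneity of degree $p$ on both sides, I can then normalize to $a_1+a_2=1$. Writing $t\coloneqq a_1\in[0,1]$, the statement reduces to the scalar inequality $1\le C_p\,(t^p+(1-t)^p)$.

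For $p\ge 1$, I will use convexity of $s\mapsto s^p$ on $\R_{\geq0}$. Jensen's inequality with equal weights gives
\[
\Big(\tfrac{a_1+a_2}{2}\Big)^p\le \tfrac{1}{2}\big(a_1^p+a_2^p\big).
\]
Multiplying by $2^p$ yields $(a_1+a_2)^p\le 2^{p-1}(a_1^p+a_2^p)$, and $2^{p-1}=C_p$ in this regime. The case $p=1$ is included, with equality.

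For $p\in(0,1)$, where $C_p=1$, I will use subadditivity of $s\mapsto s^p$. In the normalized form, $t\in[0,1]$ gives $t^p\ge t$ and $(1-t)^p\ge 1-t$, since raising a number in $[0,1]$ to a power less than one does not decrease it. Summing these gives $t^p+(1-t)^p\ge 1$, which is the claim. Undoing the normalization means multiplying through by $(a_1+a_2)^p$.

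There is no real obstacle here. The only care needed is the degenerate case $a_1+a_2=0$, which must be set aside before normalizing, and checking that the boundary $p=1$ is covered consistently by both branches of $C_p$.
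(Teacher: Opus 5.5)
Your proof is correct and follows the same route as the paper: you split at $p=1$, use Jensen's inequality with equal weights in the convex regime to get the constant $2^{p-1}$, and use subadditivity of $s\mapsto s^p$ when $p\le 1$. The only difference is that you prove subadditivity directly (normalize to $a_1+a_2=1$, then use $t^p\ge t$ on $[0,1]$), whereas the paper just infers it from concavity.
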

\medskip

\begin{proof}
For $p\in(0,1]$, the function $(\boldsymbol{\cdot})^p$ is concave and subadditive on $\R_{\geq0}$. Thus, $(a_1+a_2)^p \leq (a_1^p + a_2^p) = C_p (a_1^p + a_2^p)$.
For $p>1$, $(\boldsymbol{\cdot})^p$ is convex. Applying Jensen's inequality, we have $(a_1+a_2)^p \leq 2^{p-1} (a_1^p + a_2^p) = C_p (a_1^p + a_2^p)$.
\end{proof}
\medskip

Using the previous results, we now bound the variation of the time derivative of $V_{R,p}$ along the subsystem trajectories.

\medskip
\begin{lemma} \label{lem:delta_dotV}
Let $x \in \R^n$, $p>1$, and $R\in\R^{r\times n}$ have full column rank. 
Let $\psi(t)$ be the solution to $\dot{\psi} = A \psi + b$ with $\psi(0) = x$, and define $\dot V(t)\coloneqq \inner{\nabla V_{R,p}(\psi(t))}{\dot\psi(t)}$.
There exists $L_{R,p}\in\K_\infty$, dependent on $A$, $b$, $R$, and $p$, but independent of $x$, such that for all $t \geq 0$
\begin{equation}
     \dot V(t) - \dot V(0) \leq L_{R,p}(t) \left(V_{R,p}(x) + 1 \right). \label{eq:delta_dotV}
\end{equation}
\end{lemma}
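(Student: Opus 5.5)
The approach is to split the increment of $\dot V$ into a part due to the change of the gradient and a part due to the change of the vector field. Each part is bounded by Hölder's inequality (with the conjugate exponent $q$), Lemma~\ref{lem:holder_grad} and Lemma~\ref{lem:flow_bound}. Every resulting bound should be a product of a factor depending only on $t$ that vanishes at $t=0$ and a factor of the form $(\norm{x}_p+1)^p$. The latter is turned into $V_{R,p}(x)+1$ by norm equivalence and Lemma~\ref{lem:C_p}.

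\textbf{Step 1 (decomposition).} Since $\dot\psi(t)=A\psi(t)+b$, I will write
\begin{align*}
\dot V(t)-\dot V(0) &= \inner{\nabla V_{R,p}(\psi(t))-\nabla V_{R,p}(x)}{A\psi(t)+b}\\
&\quad + \inner{\nabla V_{R,p}(x)}{A(\psi(t)-x)}.
\end{align*}
By Hölder's inequality, the first term is at most $\norm{\nabla V_{R,p}(\psi(t))-\nabla V_{R,p}(x)}_q\,(\norm{A}_p\norm{\psi(t)}_p+\norm{b}_p)$, and the second is at most $\norm{\nabla V_{R,p}(x)}_q\norm{A}_p\norm{\psi(t)-x}_p$.

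\textbf{Step 2 (second term).} By the footnote in Lemma~\ref{lem:holder_grad}, $\norm{R^\top}_q=\norm{R}_p$. Moreover, $\norm{\sigpow{y}{p-1}}_q=\norm{y}_p^{p-1}$ because $(p-1)q=p$. Hence $\norm{\nabla V_{R,p}(x)}_q\le \norm{R}_p^{p}\norm{x}_p^{p-1}$. Combining this with Lemma~\ref{lem:flow_bound}, I expect the second term to be at most
\[
c_1\, t e^{\norm{A}_p t}\,(\norm{x}_p+1)^p .
\]

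\textbf{Step 3 (first term).} Let $h(t)\coloneqq (\norm{A}_p+\norm{b}_p)\, t e^{\norm{A}_p t}$. By Lemma~\ref{lem:flow_bound}, $\norm{\psi(t)-x}_p\le h(t)(\norm{x}_p+1)$. Hence
\[
\norm{\psi(t)}_p\le (1+h(t))(\norm{x}_p+1),
\]
and also $\abs{R_{i:}x},\abs{R_{i:}\psi(t)}\le M\coloneqq\norm{R}_p(1+h(t))(\norm{x}_p+1)$ for all $i\in\N_r$. Applying Lemma~\ref{lem:holder_grad} with this $M$ gives a factor $L_\nabla(M)\norm{\psi(t)-x}_p^\alpha$, and I will treat the two cases separately.
\begin{itemize}
\item If $p\in(1,2]$, then $L_\nabla$ is constant and $\alpha=p-1$. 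The product is then $\lesssim h(t)^{p-1}(1+h(t))(\norm{x}_p+1)^{p}$.
\item If $p>2$, then $\alpha=1$ and $L_\nabla(M)\propto M^{p-2}$. The product is then $\lesssim (1+h(t))^{p-1}h(t)(\norm{x}_p+1)^{p}$.
\end{itemize}
In both cases the total degree in $(\norm{x}_p+1)$ is exactly $p$.

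\textbf{Step 4 (conclusion).} Since $R$ has full column rank, $\norm{x}_p\le c\norm{Rx}_p=c\,(pV_{R,p}(x))^{1/p}$. By Lemma~\ref{lem:C_p}, $(\norm{x}_p+1)^p\le C_p(\norm{x}_p^p+1)\le c_2(V_{R,p}(x)+1)$. Collecting the time factors from Steps 2 and 3 gives a continuous, nondecreasing function of $t$ that vanishes at $0$ and is unbounded. To guarantee strict monotonicity, I will add $t$ to it, which yields the required $L_{R,p}\in\K_\infty$. The constants depend only on $A$, $b$, $R$ and $p$, not on $x$.

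The main obstacle I expect is the homogeneity bookkeeping in Step 3 for $p>2$. There the state-dependent Hölder constant $M^{p-2}$ must be absorbed so that the overall power of $\norm{x}_p+1$ is exactly $p$ and does not exceed it. This is why $M$ is chosen linear in $(\norm{x}_p+1)$ with a purely time-dependent coefficient.
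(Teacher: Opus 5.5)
Your proposal is correct and follows essentially the same route as the paper. Both use the splitting into a gradient-variation term and the vector-field term $\inner{\nabla V_{R,p}(x)}{A(\psi(t)-x)}$, and both bound these with H\"older's inequality, Lemma~\ref{lem:holder_grad} applied with a time-dependent bound $M$, Lemma~\ref{lem:flow_bound}, and the case split $p\in(1,2]$ versus $p>2$. The only difference is bookkeeping: you track powers of $\norm{x}_p+1$ and convert to $V_{R,p}(x)+1$ once at the end, adding $t$ to ensure strict monotonicity, whereas the paper carries $V_{R,p}(x)^{1/p}$ through $\rho(V_x)$ from the start.
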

\medskip

\begin{proof}
For brevity, let $V_x \coloneqq V_{R,p}(x)$, $c_R \coloneqq p^{1/p}\norm{R^\dag}_p$, $c_A \coloneqq \norm{A}_p$, and $c_b \coloneqq \norm{b}_p$. 
From the definition of $V_{R,p}$, $\norm{x}_p = \norm{R^\dag R x}_p\leq \norm{R^\dag}_p \norm{Rx}_p = c_R V_x^{1/p}$. 
We bound the initial derivative of $\psi$ as
\begin{align}
    \norm{\dot\psi(0)}_p &= \norm{A x + b}_p \leq c_A \norm{x}_p + c_b \leq c_A c_R V_x^{\frac{1}{p}} + c_b  \nonumber\\
            & 
            \leq \max\{c_a c_R,c_b\} (V_x^{\frac{1}{p}} +1) \eqqcolon \rho(V_x). \label{eq:rho}
\end{align}
Defining $\gamma(t)\coloneqq t e^{c_A t}$, Lemma~\ref{lem:flow_bound} and \eqref{eq:rho} yield 
\begin{equation}
    \norm{\psi(t) - x}_p \leq \rho(V_x) \gamma(t). \label{eq:delta_psi}
\end{equation}
Using \eqref{eq:rho}, \eqref{eq:delta_psi} and $\dot \psi(0)=Ax+b$, we obtain
\begin{align}
    \norm{\dot\psi(t)}_p  =&\norm{\dot \psi (t)-\dot \psi (0)+\dot \psi (0)}_p \leq \norm{A \left(\psi(t) - x\right)}_p \notag \\
    &+ \norm{\dot\psi(0)}_p
            \leq \rho(V_x) \left(1 + c_A \gamma(t) \right). \label{eq:dotpsi}
\end{align}
By the triangle inequality and \eqref{eq:delta_psi}, we have
\begin{equation}
    \norm{\psi(t)}_p \leq \norm{x}_p + \norm{\psi(t)-x}_p \leq c_R V_x^{1/p} + \rho(V_x)\gamma(t). \label{eq:psi_t}
\end{equation}

We decompose the variation of the derivative, by adding and subtracting $\inner{\nabla V_{R,p}(x)}{\dot{\psi}(t)}$, as
\begin{align}
    \dot V(t) - \dot V(0)
    &= \underbrace{\inner{\nabla V_{R,p}(\psi(t)) - \nabla V_{R,p}(x)}{\dot\psi(t)}}_{\Delta_1} \nonumber\\
    &\quad + \underbrace{\inner{\nabla V_{R,p}(x)}{\dot\psi(t) - \dot\psi(0)}}_{\Delta_2}.
    \label{eq:lem_pert_split}
\end{align}

We first bound  $\Delta_1$. Applying Hölder's inequality and using \eqref{eq:dotpsi} yields
\begin{align}
    &\Delta_1\leq \norm{\nabla V_{R,p}(\psi(t)) - \nabla V_{R,p}(x)}_q \norm{\dot \psi(t)}_p \notag \\
    &\leq  \norm{\nabla V_{R,p}(\psi(t)) - \nabla V_{R,p}(x)}_q \rho(V_x) \left(1 + c_A \gamma(t) \right)
    \label{eq:eq1}
\end{align}
We will apply Lemma~\ref{lem:holder_grad} to bound \eqref{eq:eq1}. Note that
\begin{align}
\abs{R_{i:}\psi(t)}&\leq  \norm{R\psi(t)}_\infty \leq \norm{R\psi(t)}_p \leq \norm{R}_p  \norm{\psi(t)}_p \notag \\
&\leq \norm{R}_p \Big( c_R V_x^{\frac{1}{p}} + \rho(V_x)\gamma(t) \Big)=:M(t) \label{eq:M}
\end{align}
where in \eqref{eq:M} we have used \eqref{eq:psi_t}.
Let $\alpha \coloneqq \min\{1, p-1\}$.
Then, the hypotheses of Lemma~\ref{lem:holder_grad} are satisfied with $M(t)$ for each pair
$\psi(t),x \in \R^n$ which yields
\begin{align*}
&\norm{\nabla V_{R,p}(\psi(t)) - \nabla V_{R,p}(x)}_q \notag \\
&\le L_\nabla(M(t)) \norm{\psi(t) - x}_p^\alpha 
    \le L_\nabla(M(t)) \rho(V_x)^\alpha\gamma(t)^\alpha,
\end{align*}
where we have used \eqref{eq:delta_psi}.
%
From 
the previous inequality
and \eqref{eq:eq1}, we have
\begin{equation}
    \Delta_1 \leq L_\nabla(M(t)) \rho(V_x)^{1+\alpha} \gamma(t)^\alpha \bigl(1 + c_A \gamma(t)\bigr). \label{eq:lem_pert_T1_base}
\end{equation}
Since $1+\alpha\ge 1$, according to Lemma~\ref{lem:C_p} we can bound
\begin{align}
&\rho(V_x)^{1+\alpha}
=
\max\{c_A c_R,c_b\}^{1+\alpha} (V_x^{\frac{1}{p}} + 1)^{1+\alpha} \notag \\
&\leq
 C_\rho \Bigl( V_x^{\frac{1+\alpha}{p}} + 1\Bigr) 
\label{eq:rho2}
\end{align}
with $C_\rho:=2^\alpha \max\{(c_A c_R)^{1+\alpha},c_b^{1+\alpha}\}$.

%
We consider two cases depending on the value of $p$. 

For $p\in(1,2]$, we have  $\alpha=p-1$, and from \eqref{eq:L_nabla}, $L_\nabla(M(t))=2^{2-p} \norm{R}_p^{p}$.
Then  
using \eqref{eq:rho2}, we obtain
\begin{equation}
   \!\! L_\nabla(M(t))  \rho(V_x)^{1+\alpha}\leq K_1 (V_x+1) \gamma(t)^{\alpha} \bigl(1 + c_A \gamma(t)\bigr).
    \label{eq:Lrho1}
\end{equation}
with $K_1:=2 \norm{R}_p^{p}  \max\{ (c_A c_R)^{p} ,  c_b^{p}\}$.

For $p>2$, we have $\alpha=1$ and 
$L_\nabla(M(t)) = C_\nabla M(t)^{p-2}$ with constant $C_\nabla > 0$ given by \eqref{eq:L_nabla}.
From \eqref{eq:M} and \eqref{eq:rho}, defining $c_M {\coloneqq} \norm{R}_p \max\{c_R,\, c_A c_R, c_b\}$ we obtain $M(t) \leq c_M \Big(V_x^\frac{1}{p}+1\Big)(1+\gamma(t))$.
Applying Lemma~\ref{lem:C_p}, there exists a constant $C_{p-2} > 0$ such that
\begin{align*}
    &L_\nabla (M(t))\rho(V_x)^{1+\alpha} =C_\nabla M(t)^{p-2}\rho(V_x)^{1+\alpha} \notag \\
    &\leq  C_\nabla c_M^{p-2} C_{p-2} \Big(V_x^\frac{p-2}{p} + 1\Big)(1+\gamma(t))^{p-2} \rho(V_x)^{1+\alpha}.
\end{align*}
Defining $K_2 \coloneqq C_\nabla c_M^{p-2} C_{p-2} C_\rho$ and using \eqref{eq:rho2} we have
\begin{align}
    &L_\nabla(M(t))\rho(V_x)^{1+\alpha} \nonumber\\
    &\qquad\leq K_2 \Big(V_x^\frac{p-2}{p} + 1\Big) \Big(V_x^\frac{2}{p}+1\Big) (1+\gamma(t))^{p-2} \nonumber\\
    &\qquad =  K_2 \Big(V_x + V_x^\frac{p-2}{p} + V_x^\frac{2}{p} + 1 \Big) (1+\gamma(t))^{p-2} \label{eq:inter} \\
    &\qquad \leq 3 K_2 \Big(V_x + 1 \Big) (1+\gamma(t))^{p-2},  \label{eq:Lrho2}
\end{align}
where in \eqref{eq:inter} we have used that $V_x^{\frac{p-2}{p}} \le  V_x+1$ and $V_x^{\frac{2}{p}} \le  V_x+1$
since $\frac{p-2}{p}<1$ and $\frac{2}{p}<1$.

Define $L_1(t)\coloneqq \max\{K_1,\, 3K_2\}(1+\gamma(t))^{\max\{0,\,{p-2}\}}$ which is continuous, non-decreasing, and non-negative. From \eqref{eq:Lrho1}  and \eqref{eq:Lrho2}, we have $L_\nabla(M(t))\rho(V_x)^{1+\alpha}\leq L_1(t)(V_x+1)$.
Using this into \eqref{eq:lem_pert_T1_base}, we obtain
\begin{equation}
    \Delta_1 \leq L_1(t) \left(V_0 + 1\right) \gamma(t)^\alpha (1 + c_A \gamma(t)). \label{eq:T1_final}
\end{equation} 

Next, we bound $\Delta_2$. From the gradient definition, we have 
\begin{align}
    \norm{\nabla V_{R,p}(x)}_q &= \norm{R^\top \sigpow{Rx}{p-1}}_q \leq \norm{R^\top}_q \norm{\sigpow{Rx}{p-1}}_q \nonumber\\
    & 
    \leq \norm{R}_p^p \norm{x}_p^{p-1} \!\leq \norm{R}_p^p c_R^{p-1} V_x^\frac{p-1}{p}. \label{eq:grad_bound}
\end{align} 
Applying Hölder's inequality to $\Delta_2$, and using \eqref{eq:delta_psi} and \eqref{eq:grad_bound}, yields
\begin{align}
   \Delta_2 &\leq \norm{\nabla V_{R,p}(x)}_q \norm{A(\psi(t) - x)}_p \nonumber\\
       &\leq \norm{R}_p^p c_R^{p-1} V_x^{\frac{p-1}{p}} c_A \rho(V_x) \gamma(t)\nonumber\\
       &= \norm{R}_p^p c_R^{p-1} c_A \Big(c_A c_R V_x + c_b V_x^{\frac{p-1}{p}} \Big) \gamma(t).
\end{align}
Given that $\frac{p-1}{p}<1$, using the bound $V_x^{\frac{p-1}{p}} \leq V_x + 1$ directly establishes 
    $\Delta_2 \leq L_2 \left(V_x + 1\right) \gamma(t)$, 
for an appropriate constant $L_2>0$.
Combining \eqref{eq:T1_final} and 
 this bound on $\Delta_2$, we define
\begin{equation}
    L_{R,p}(t) \coloneqq L_1(t) \gamma(t)^\alpha \left(1 + c_A\gamma(t)\right) + L_2 \gamma(t).
\end{equation}
Since $\gamma(0) = 0$ and $\gamma(t)$ is continuous and strictly increasing for $t \geq 0$, and satisfies $\lim_{t \to \infty} \gamma(t) = \infty$, we have $L_{R,p}\in\K_\infty$, which completes the proof.
\end{proof}

\medskip
\begin{lemma} \label{lem:convex_p_ineq}
Let $p \in (1, \infty)$. For all $a_1$, $a_2 \geq 0$ and $s > 0$, the following inequality holds:
\begin{equation}
    (a_1+a_2)^p \leq (1+s)^{p-1} a_1^p + \Big(\frac{1+s}{s}\Big)^{p-1} a_2^p.
\end{equation}
\end{lemma}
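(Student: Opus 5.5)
The inequality will follow from convexity of $t\mapsto t^p$ on $\R_{\geq 0}$ for $p>1$, applied after writing $a_1+a_2$ as a convex combination of rescaled copies of $a_1$ and $a_2$. Fix $s>0$ and set the weights $\theta\coloneqq \frac{1}{1+s}$ and $1-\theta = \frac{s}{1+s}$, both in $(0,1)$. Then
\begin{equation*}
a_1+a_2 = \theta\,\big((1+s)a_1\big) + (1-\theta)\Big(\tfrac{1+s}{s}\,a_2\Big).
\end{equation*}

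Applying Jensen's inequality, i.e., convexity of $(\boldsymbol{\cdot})^p$ on $\R_{\geq0}$, to this convex combination gives
\begin{equation*}
(a_1+a_2)^p \leq \theta (1+s)^p a_1^p + (1-\theta)\Big(\tfrac{1+s}{s}\Big)^p a_2^p .
\end{equation*}

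Substituting the weights simplifies the coefficients: $\theta(1+s)^p = (1+s)^{p-1}$ and $(1-\theta)\big(\frac{1+s}{s}\big)^p = \frac{s}{1+s}\big(\frac{1+s}{s}\big)^{p}=\big(\frac{1+s}{s}\big)^{p-1}$. This is exactly the claimed bound.

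The only real step is choosing the convex weights so that the coefficients come out as stated. Degenerate cases such as $a_1=0$ or $a_2=0$ need no separate treatment, since the convexity argument covers them directly. Equivalently, the bound is Hölder's inequality applied to $a_1\cdot 1 + a_2\cdot 1$ with weights $(1+s)^{-1}$ and $s(1+s)^{-1}$.
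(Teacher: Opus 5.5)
Your proof is correct and follows the paper's argument: both apply convexity of $a\mapsto a^p$ with weight $\theta=\frac{1}{1+s}$ to the decomposition $a_1+a_2=\theta\frac{a_1}{\theta}+(1-\theta)\frac{a_2}{1-\theta}$. The coefficients $\theta^{1-p}=(1+s)^{p-1}$ and $(1-\theta)^{1-p}=\big(\frac{1+s}{s}\big)^{p-1}$ then give exactly the stated bound.
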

\medskip

\begin{proof}
Define $\theta\coloneq\frac{1}{1+s }\in(0,1)$.
Since $p \in (1, \infty)$, the function $a\mapsto a^p$ is convex on $\R_{\geq 0}$. By convexity, we have
$(a_1 + a_2)^p = \left(\theta \frac{a_1}{\theta} + (1-\theta) \frac{a_2}{1-\theta}\right)^p 
        \leq \theta \left(\frac{a_1}{\theta}\right)^p + (1-\theta) \left(\frac{a_2}{1-\theta}\right)^p = \theta^{1-p} a_1^p + (1-\theta)^{1-p} a_2^p$. 
Substituting $\theta=\frac{1}{1+s}$ and $1-\theta = \frac{s}{1+s}$, the result follows.
\end{proof}


\subsection{Proof of Theorem~\ref{thm:eps} (Space Regularization)}

\medskip
\noindent
\ref{thm:eps:dwell})
Let $K_{R,p,k}\in\K_\infty$ for each $k\in \N_N$ be
the 
$L_{R,p}$ from Lemma~\ref{lem:delta_dotV} applied to 
$\dot x = f_k(x)$, and define
$K_{R,p}(t) \coloneqq \max_{k\in\N_N}K_{R,p,k}(t) \in \K_\infty$.
Since $R$ and $S$ have full column rank, there exist  $c_V$, $c_R > 0$ such that $V_{S,p}(x) \geq c_V V_{R,p}(x)$, and $\norm{x}_p \leq c_R V_{R,p}(x)^{1/p}$ for all $x \in \R^n$.

Consider $(x,\sigma)\in\setD_\varepsilon$ and let $[x^{+\top},\sigma^+]^\top\in G(x,\sigma)$.
By definition of $\setD_\varepsilon$, $V_{R,p}(x) \geq \varepsilon$. 
Let $\psi(t)$ denote the solution to $\dot{x} = A_{\sigma^+} x + b_{\sigma^+}$ with $\psi(0) = x^+ = x$, and define $\dot V(t)\coloneqq \inner{\nabla V_{R,p}(\psi(t))}{\dot\psi(t)}$.
By Lemma~\ref{lem:min_k} and the definition of $G$, if follows that $\dot V(0) \leq -V_{S,p}(x)<-\eta V_{S,p}(x)$ which implies $(x,\sigma^+)\in\setC_\varepsilon\backslash\setD_\varepsilon$. 
By definition of $\setD_\varepsilon$, a subsequent jump cannot occur as long as the trajectory satisfies
\begin{equation}
    \delta(t) \coloneqq \dot V(t) + \eta V_{S,p}(\psi(t)) < 0. \label{eq:ineq_flow}
\end{equation}
Continuity of the left-hand side of \eqref{eq:ineq_flow} and the fact that $\dot V(0) \leq -V_{S,p}(x)$ ensure that \eqref{eq:ineq_flow} holds over a sufficiently small time interval.
However, establishing a minimum dwell-time requires a uniform condition valid for all $(x,\sigma) \in \setD_\varepsilon$. 
To this end, we bound the left-hand side of \eqref{eq:ineq_flow} by applying Lemma~\ref{lem:delta_dotV}, which for all $t \geq 0$ yields
%
\begin{align}
    \dot V(t) &\leq \dot V(0) + K_{R,p}(t)(V_{R,p}(x) + 1) \nonumber\\
    &\leq -V_{S,p}(x) + K_{R,p}(t)(V_{R,p}(x) + 1). \label{eq:eps_dotV_bound}
\end{align}
Applying the triangle inequality and Lemma~\ref{lem:convex_p_ineq} with $s\coloneqq t>0$, for all $t>0$ we have 
\begin{align}
    &\!\!\!V_{S,p}(\psi(t)) \leq p^{-1} \left( \norm{Sx}_p  + \norm{S}_p \norm{\psi(t)-x}_p\right)^p\nonumber\\
    &\!\!\!\leq \! (1\!+t)^{p-1} V_{S,p}(x) \!+\! \left(\!\frac{1+t}{t}\!\right)^{p-1} \! \frac{\norm{S}_p^p}{p} \! \norm{\psi(t)\!-\!x}_p^p. \label{eq:eps_VS_bound}
\end{align}
Let $c_A \coloneqq \max_{i\in\N_N} \norm{A_i}_p$, and $c_b \coloneqq \max_{i\in\N_N} \norm{b_i}_p$. 
From Lemma~\ref{lem:flow_bound} and the definition of $c_R$, we have $\norm{\psi(t)-x}_p \leq (c_A c_R V_{R,p}(x)^{1/p}+ c_b) t e^{c_A t}$.
Substituting this into \eqref{eq:eps_VS_bound}, and using Lemma~\ref{lem:C_p}, we obtain
\begin{align}
    &V_{S,p}(\psi(t)) \leq (1+t)^{p-1} V_{S,p}(x) \nonumber\\
    &\quad + \left(1+t\right)^{p-1} \frac{\norm{S}_p^p}{p} (c_A c_R V_{R,p}(x)^{1/p}+ c_b)^p t \, e^{p c_A t} \nonumber\\
    &\leq \!\! \biggl(\!(1+t)^{p-1} \!+ \!\underbrace{(1+t)^{p-1} t \frac{C_p}{p} \! \left(\norm{S}_p c_A c_R e^{c_A t}\right)^p\!}_{K_S(t)} \, \biggr) V_{S,p}(x) \nonumber\\
    &\quad+ \underbrace{(1+t)^{p-1} t \frac{C_p}{p} \left(\norm{S}_p c_b e^{c_A t}\right)^p}_{\alpha_S(t)}, \label{eq:eps_VS_final}
\end{align}
where $K_S$, $\alpha_S \in \K_\infty$.
Combining \eqref{eq:eps_dotV_bound} and \eqref{eq:eps_VS_final}, we have
\begin{equation*}
    \delta(t)  \leq  K_{R,p}(t)V_{R,p}(x) - \vartheta(t) V_{S,p}(x) + \alpha(t) ,
\end{equation*}
where 
$\vartheta(t)\coloneqq 1 -\eta (1+t)^{p-1} - \eta K_S(t)$, and $\alpha(t)\coloneqq K_{R,p}(t) + \eta \alpha_S(t) \in\K_\infty$.
Since $K_S \in \K_\infty$ and $\eta \in (0,1)$, $\vartheta(t)$ is continuous, strictly decreasing, and satisfies $\vartheta(0) = 1-\eta > 0$.
By continuity, $\vartheta(t)>0$ for all $t \in [0, T_\vartheta]$
for some $T_\vartheta > 0$.
Using $V_{S,p}(x) \geq c_V V_{R,p}(x)$, for all $t \in [0,T_\vartheta]$ we obtain
\begin{equation}
    \delta(t) \! \leq \! \left(K_{R,p}(t) - \vartheta(t) c_V\right)V_{R,p}(x) + \alpha(t) \eqqcolon \Delta(x, t).
\end{equation} 
Given that $V_{R,p}(x) \geq \varepsilon$, evaluating $\Delta$ at $t=0$ yields
\begin{equation}
    \Delta(x, 0) = -c_V (1-\eta)V_{R,p}(x) \leq -c_V(1-\eta)\varepsilon < 0.
\end{equation}
By continuity of $\Delta(x,t)$ with respect to $t$, there exists a uniform constant $T(\varepsilon) \in (0, T_\vartheta]$ such that $\Delta(x, t) < 0$ for all $t \in [0, T(\varepsilon)]$ and for all $(x,\sigma)\in\setD_\varepsilon$. 
Consequently, \eqref{eq:ineq_flow} holds for all $t \in [0, T(\varepsilon)]$ and the result follows.

\medskip
\noindent
\ref{thm:eps:UGAS})
Let $z \coloneqq (x,\sigma) \in \R^n \times \N_N$ denote the hybrid state. Define $V_\varepsilon(x) \coloneqq V_{R,p}(x) - \varepsilon$ and $W(z) \coloneqq \max\{V_\varepsilon(x),\, 0\}$.
Since $R$ has full column rank, $V_{R,p}$ is positive definite and radially unbounded, implying that $\setA_\varepsilon$ is compact. Furthermore, $W(z)$ is continuous, positive definite with respect to $\setA_\varepsilon$, and radially unbounded. Thus, there exist $\underline\alpha$, $\overline\alpha \in \K_\infty$ such that $\underline\alpha(\norm{z}_{\setA_\varepsilon}) \leq W(z) \leq \overline\alpha(\norm{z}_{\setA_\varepsilon})$ for all $z$. 
Since $R$ and $S$ have full column rank, there exists a constant $c_V > 0$ such that $V_{S,p}(x) \geq c_V V_{R,p}(x)$ for all $x \in \R^n$.

Let $\phi(t,j) = (x(t,j), \sigma(t,j))$ be a solution to $\Hy_\varepsilon$. 
Pick any $(t, j) \in \dom{\phi}$ and let $0 = t_0 \leq t_1 \leq \dots \leq t_{j+1} = t$ satisfy
\begin{equation}
    \dom{\phi} \cap \left( [0, t] \times \{0, . . . , j\}\right) = \! \bigcup_{i\in\{0, \dots, j\}} \!\! [t_i,\, t_{i+1}] \times\{i\}.
\end{equation}

\textit{Step 1: Forward pre-invariance of $\setA_\varepsilon$.}
We first show that any solution starting in $\mathcal{A}_\varepsilon$ remains there for its duration.
Assume that $V_{R,p}(x(0,0)) \leq \varepsilon$. 
Suppose, for a contradiction, that the trajectory 
escapes $\setA_\varepsilon$.
Since jumps keep $x$ constant, the trajectory must escape $\setA_\varepsilon$ during flow.
Since $V_{R,p}(x(t,j))$ is continuous, there exist a hybrid time interval $[s_1, s_2]\times\{i\}\in\dom{\phi}$ such that $V_{R,p}(x(s_1,i)) = \varepsilon$ and $V_{R,p}(x(s,i)) > \varepsilon$ for all $s \in (s_1, s_2]$.
Thus, by definition of $\setC_\varepsilon$, we have that $V_{R,p}(x(s,i))$ is strictly decreasing for all $s \in (s_1, s_2]$. 
This contradicts $V_{R,p}(x(s_2,i)) > \varepsilon = V_{R,p}(x(s_1,i))$, and consequently, $V_{R,p}(x(t,j)) \leq \varepsilon$ for all $(t, j)\in\dom{\phi}$.

\textit{Step 2: Bounding $W$.}
Let $(t^*,j^*) \in \dom{\phi}$ be the first hybrid time such that $\phi(t^*, j^*) \in \setA_\varepsilon$ (if the trajectory never reaches $\setA_\varepsilon$, we set $t^* = t$, $j^* = j$).
For all $(s,i)\in\dom{\phi}$ such that $s<t^*$, we have $V(x(s,i))>\varepsilon$. 
By the definitions of $\setC_\varepsilon$ and $c_V$, during flows we have
$\inner{\nabla V_{R,p}(x(s,i))}{f_{\sigma(x,i)}x(s,i)} \leq - \eta V_{S,p} (x(s,i))  
\leq - \eta c_V V_{R,p}(x(s,i))$.

Applying the Comparison Lemma \cite[Lem. 3.4]{khalil2002nonlinear} on each flow interval $[t_i, t_{i+1}]$ with $i<j^*$, we obtain
\begin{equation}
    V_{R,p}(x(t_{i+1},i)) \leq e^{-\eta c_V (t_{i+1}-t_i)} V_{R,p}(x(t_i,i)). \label{eq:bound_ti}
\end{equation}
Analogously, applying the Comparison Lemma \cite[Lem. 3.4]{khalil2002nonlinear} on $[t_{j^*}, t^*]$, we have
\begin{equation}
    V_{R,p}(x(t^*,j^*)) \leq e^{-\eta c_V (t^*-t_{j^*})} V_{R,p}(x(t_{j^*},j^*)). \label{eq:bound_tj}
\end{equation}
Since $x$ does not change during jumps, $V_{R,p}(x(t_{i}, i)) = V_{R,p}(x(t_{i}, i-1))$ for all $i=\{1,\dots,j\}$.
By concatenating \eqref{eq:bound_ti} and \eqref{eq:bound_tj}, we obtain
\begin{equation}
    V_{R,p}(x(t^*,j^*)) \leq e^{-\eta c_V t^*} V_{R,p}(x(0,0)).
\end{equation}
Subtracting $\varepsilon$ from both sides, using the fact that $e^{-\eta c_V t^*} \leq 1$ and the definition of $W$, we have
\begin{align*}
    &V_\varepsilon(x(t^*,j^*))  \leq e^{-\eta c_V t^*} V_{R,p}(x(0,0)) - \varepsilon \nonumber \\
    &\leq e^{-\eta c_V t^*} \left( V_{R,p}(x(0,0)) - \varepsilon \right) \leq e^{-\eta c_V t^*} W(\phi(0,0)). \label{eq:V_eps_global}
\end{align*}
Since the right-hand side is nonnegative, it follows that $W(\phi(t^*,j^*)) \leq e^{-\eta c_V t^*} W(\phi(0,0))$.
For all $(t,j) \in \dom{\phi}$ such that $t \geq t^*$, the forward pre-invariance established in Step 1 guarantees $W(\phi(t,j)) = 0\leq W(\phi(t^*,j^*))$. 
Thus, for all $(t,j) \in \dom\phi$, we conclude
\begin{equation}
    W(\phi(t,j)) \leq e^{-\eta c_V t} W(\phi(0,0)). \label{eq:W_global}
\end{equation}

\textit{Step 3: The $\KL$ bound.}
By item \ref{thm:eps:dwell}), we have $t \geq (j-1)T(\varepsilon)$ for all $(t,j) \in \dom \phi$.
Then, $t + j \leq t \left(1 + \frac{1}{T(\varepsilon)}\right) + 1$, which implies $t \geq \frac{T(\varepsilon)}{T(\varepsilon)+1} (t+j-1)$. Substituting in \eqref{eq:W_global} yields
    $W(\phi(t,j)) \leq e^{-\eta c_V \frac{T(\varepsilon)}{T(\varepsilon)+1} (t+j-1)} W(\phi(0,0)) 
    = C_\varepsilon e^{-\mu (t+j)} W(\phi(0,0))$,
where $\mu \coloneqq \eta c_V \frac{T(\varepsilon)}{T(\varepsilon)+1} > 0$ and $C_\varepsilon \coloneqq e^{\mu} > 1$.
Using the functions $\underline\alpha$ and $\overline\alpha$, we obtain
\begin{equation}
    \norm{\phi(t,j)}_{\setA_\varepsilon} \leq \underline\alpha^{-1}\left( C_\varepsilon e^{-\mu (t+j)} \overline\alpha(\norm{\phi(0,0)}_{\setA_\varepsilon}) \right).
\end{equation}
Defining the function $\beta(r, s) \coloneqq \underline\alpha^{-1}\left( C_\varepsilon e^{-\mu s} \overline\alpha(r) \right) \in \KL$, we have $\norm{\phi(t,j)}_{\setA_\varepsilon} \leq \beta(\norm{\phi(0,0)}_{\setA_\varepsilon}, t+j)$ for all $(t,j) \in \dom \phi$.
By \cite[Theorem 3.40]{goebel2012hybrid}, this $\KL$ bound implies that $\setA_\varepsilon$ is UGpAS for $\Hy_\varepsilon$.
Finally, as established in the proof of Theorem~\ref{th:UGAS}, the hybrid basic conditions \cite[Ass. 6.5]{goebel2012hybrid} hold and maximal solutions cannot escape to infinity in finite time due to the affine dynamics.
Therefore, maximal solutions are complete, and UGpAS implies UGAS of $\setA_\varepsilon$ for $\Hy_\varepsilon$. \hfill $\blacksquare$


\subsection{Proof of Theorem~\ref{thm:time_reg} (Time Regularization)}
\label{app:thm:time_reg}

\noindent
\ref{thm:time_reg:dwell})
By~\eqref{eq:maps_time}, the timer $\tau$ satisfies $\tau^{+}=0$ after each jump. Since $\dot\tau = \min \{1,2- \tau/T\}=1$ for $\tau\in[0,T]$, the timer takes exactly $T$ time units to rise from $0$ to $T$. Given that the jump set $\setD_T$ in~\eqref{eq:setD_T} requires $\tau\geq T$, no jump can occur within $T$ time units of the previous one, giving $t\geq(j-1)T$ for all $(t,j)\in\dom\phi$.

\medskip
\noindent
\ref{thm:time_reg:UGAS})
For each $k\in\N_N$, let $K_{R,p,k}\in\K_\infty$ be the function $L_{R,p}$ from Lemma~\ref{lem:delta_dotV} applied to the subsystem $\dot x = f_k(x)$, and define
$K_{R,p}(t) \coloneqq \max_{k\in\N_N}K_{R,p,k}(t) \in \K_\infty$.
Since $R$ and $S$ have full column rank, there exist constants $c_V$, $c_R > 0$ such that 
$V_{S,p}(x) \geq c_V V_{R,p}(x)$, and $\norm{x}_p \leq c_R V_{R,p}(x)^{1/p}$ for all $x \in \R^n$.
Define $T_M \coloneqq \min\{K_{R,p}^{-1}(\frac{c_V}{2}),\, \frac{1}{2c_V} \}> 0$ and consider $T\in[0,T_M]$. Then, we have $K_{R,p}(T) \leq \frac{c_V}{2}$. 
Define $\gamma(T) \coloneqq c_V - K_{R,p}(T) \in \left[\frac{c_V}{2}, c_V\right]$.
By definition of $T_M $, 
\begin{equation}
    1 - T\gamma(T) \geq 1 - T_M  \max_{T\in[0,T_M ]} \gamma(T) = 1 - T_M  c_V \geq \frac{1}{2}.
    \label{eq:TgammaT}
\end{equation}
Let $\varepsilon(T) \coloneqq \frac{K_{R,p}(T)}{\gamma(T)} > 0$,
which is strictly increasing and satisfies $\lim_{T\to0^+} \varepsilon(T) = 0$.
Define $V_\varepsilon(x)\coloneqq V_{R,p}(x)-\varepsilon(T)$,
and $W(x)\coloneqq\max\{0,V_\varepsilon(x)\}$.

Let $\phi(t,j) = (x(t,j), \sigma(t,j), \tau(t,j))$ be a solution to $\Hy_T$, with initial condition $\phi(0,0)= (x_0,\, \sigma_0,\, \tau_0) \in \setC_T \cup \setD_T$. 
Pick any $(t, j) \in \dom{\phi}$ and let $0 = t_0 \leq t_1 \leq \dots \leq t_{j+1} = t$ satisfy
    $\dom{\phi} \, \cap \, \left( [0, t] \times \{0, . . . , j\}\right) = \! \bigcup_{i\in\{0, \dots, j\}} [t_i,\, t_{i+1}] \times\{i\}$.
Each flow interval $[t_i,\,t_{i+1}]$ can be separated into two phases: the forced flow, where the timer $\tau < T$ prevents switching, and the regular flow, where $\tau \geq T$ and the system flows as long as the gradient condition in \eqref{eq:setC} is satisfied.

\textit{Step 1: Bounding the forced flow.}
Consider the time interval $[0,t_1]$. 
Define $\Delta_0  \coloneqq \max\{0, T - \tau_0\} \leq T$
and suppose $\tau_0<T$, so that $\Delta_0 >0$. 
During the time interval $[0, \Delta_0 ]$, the system is forced to flow regardless of the gradient condition, and the value of $V_{R,p}$ may increase. 
Using the triangle inequality and the submultiplicativity of $\norm{\boldsymbol{\cdot}}_p$, we have $\norm{Rx(\Delta_0 ,0)}_p \leq \norm{Rx_0}_p + \norm{R}_p \norm{x(\Delta_0 ,0) - x_0}_p$. Applying Lemma~\ref{lem:convex_p_ineq} with $s\coloneqq\Delta_0$, it follows that
\begin{align}
    &\!\!\!V_{R,p}(x(\Delta_0 ,0)) \! \leq p^{-1} \! \left(\norm{Rx_0}_p + \norm{R}_p \norm{x(\Delta_0 ,0) - x_0}_p\right)^p \nonumber\\
    &\quad \leq (1+\Delta_0)^{p-1} V_{R,p}(x_0) \nonumber\\
    &\qquad + \left(\frac{1+\Delta_0}{\Delta_0}\right)^{p-1} \frac{\norm{R}_p^p}{p} \norm{x(\Delta_0 ,0) - x_0}_p^p. \label{eq:V_T}
\end{align}
Let $c_A \coloneqq \max_{i\in\N_N} \norm{A_i}_p$, and $c_b \coloneqq \max_{i\in\N_N} \norm{b_i}_p$. 
By Lemma~\ref{lem:flow_bound}, and using the definition of $c_R$, we have $\norm{x(\Delta_0 ,0) - x_0}_p \leq (c_A c_R V_{R,p}(x_0)^{\frac{1}{p}} + c_b) \Delta_0  e^{c_A \Delta_0 }$. 
Using this in \eqref{eq:V_T}, and applying Lemma~\ref{lem:C_p}, we obtain
{\allowdisplaybreaks
\begin{align}
    &V_{R,p}(x(\Delta_0 ,0)) \leq (1+\Delta_0)^{p-1} V_{R,p}(x_0) \nonumber\\
    &\; + \! \left(\!\frac{1+\Delta_0}{\Delta_0}\!\!\right)^{p-1} \! \frac{\norm{R}_p^p}{p} \!\left( c_A c_R V_{R,p}(x_0)^{\frac{1}{p}} + c_b \right)^p \!\!\left(\Delta_0  e^{c_A \Delta_0 }\right)^p \nonumber\\
    &\leq \!\underbrace{(1+\Delta_0 )^{p-1} \! \left(\!1 \!+\! \Delta_0  \frac{C_p}{p}\!\left(\norm{R}_p c_A c_R e^{c_A \Delta_0 }\right)^p \right)}_{\hat\gamma(\Delta_0 )} V_{R,p}(x_0) \nonumber\\
    &\; + \underbrace{(1+\Delta_0 )^{p-1} \Delta_0 \frac{C_p}{p} \left(\norm{R}_p c_b   e^{c_A \Delta_0 }\right)^p}_{\alpha_0(\Delta_0 )}. \label{eq:V_t1}
\end{align}
}
Note that $\alpha_0 \in \K_\infty$, and that $\hat\gamma$ is strictly increasing, and $\lim_{\Delta_0 \to0^+} \hat\gamma(\Delta_0 ) = 1$.
On the other hand, if $\tau_0 \geq T$, we have $\Delta_0 =0$, there is no forced flow and~\eqref{eq:V_t1} holds trivially with $\hat\gamma(\Delta_0 ) \coloneqq 1$ and $\alpha_0(\Delta_0 ) = 0$.
Subtracting $\varepsilon(T)$ from both sides of \eqref{eq:V_t1}, using the definitions of $V_\varepsilon$ and $W$, and the fact that $\hat\gamma(\Delta_0 )\leq \hat\gamma(T)$, we obtain
\begin{align}
    &\!\!\!\!V_\varepsilon(x(\Delta_0 ,0)) \leq \hat\gamma(\Delta_0 ) V_\varepsilon(x_0) + \alpha_0(\Delta_0 ) + \varepsilon(T) (\hat\gamma(\Delta_0 )-1), \notag\\
        & \leq \hat\gamma(T) V_\varepsilon(x_0) + \hat\alpha(\Delta_0 ) 
         \leq \hat\gamma(T) W(x_0) + \hat\alpha(\Delta_0 ), \label{eq:V_T0}
\end{align}
where $\hat\alpha(T)\in\K_\infty$ is chosen such that $\hat\alpha(\Delta_0 ) \geq \alpha_0(\Delta_0 ) + \varepsilon(T) (\hat\gamma(\Delta_0 )-1)$, which is possible since $\hat\gamma$ is strictly increasing, $\lim_{\Delta_0 \to0^+} \hat\gamma(\Delta_0 ) = 1$, and $\alpha_0 \in \K_\infty$.

Let us now consider the time intervals $[t_i,t_{i+1}]$ for all $i\in[1,j]$.
By Lemma~\ref{lem:min_k} and the definition of $G_T$, $\sigma_i\coloneqq\sigma(t_i,i)$ is chosen such that
\begin{align}
    &\inner{\nabla V_{R,p}(x(t_i,i))}{f_{\sigma_i}(x(t_i,i))} \leq -V_{S,p}(x(t_i,i)) \nonumber\\
    & \qquad\qquad\qquad\qquad\qquad\qquad\leq -c_V V_{R,p}(x(t_i,i)).
    \label{eq:init_deriv_i}
\end{align}
From item \ref{thm:time_reg:dwell}), we have $t_{i+1} - t_i \geq T$. Let $s\in[t_i,t_i+T)$, thus $\tau(s,i) < T$ and the system is forced to flow regardless of the gradient condition.
Applying the triangle inequality, Lemma \ref{lem:delta_dotV} to subsystem $\sigma_i$, and using \eqref{eq:init_deriv_i}, we obtain
\begin{align}
    &\inner{\nabla V_{R,p}(x(s,i))}{f_{\sigma_i} (x(s,i))} \nonumber\\
    &\leq  - c_V V_{R,p}(x(t_i,i)) + K_{R,p}(s-t_i)(V_{R,p}(x(t_i,i)) + 1) \nonumber\\
    & \leq - \left(c_V - K_{R,p}(T)\right)V_{R,p}(x(t_i,i)) + K_{R,p}(T) \nonumber\\
    & = - \gamma(T) \left(V_{R,p}(x(t_i,i)) - \varepsilon(T) \right)  = - \gamma(T) V_\varepsilon(x(t_i,i)), \label{eq:dotV_forced}
\end{align}
where we used the definitions of $\gamma(T)$ and $\varepsilon(T)$.
Noting that the time derivative of $V_\varepsilon$ equals that of $V_{R,p}$, integrating \eqref{eq:dotV_forced} on the interval $[t_i, t_i+T]$ gives 
\begin{align}
    V_\varepsilon(x(t_i+T,i)) &\leq \left(1 - T \gamma(T)\right) V_\varepsilon(x(t_i,i)) \nonumber\\
    &\leq \left(1 - T \gamma(T)\right) W(x(t_i,i)), \label{eq:V_i}
\end{align}
where we used the definition of $W$ and the fact that, from \eqref{eq:TgammaT}, $1 - T \gamma(T)>0$.

\textit{Step 2: Bounding the regular flow.}
Consider 
   $s \in [\Delta_0 , t_1] \cup \bigcup_{i\in\{1, \dots, j\}} \!\! [t_i+T,\, t_{i+1}]$,
thus $\tau(s,0) \geq T$ and the timer does not prevents switching. 
Then, the solution flows satisfying the gradient condition, implying
\begin{align}
    &\inner{\nabla V_{R,p}(x(s,0))}{f_{\sigma_0}(x(s,0))} \leq -\eta V_{S,p}(x(s,0)) \nonumber\\
    &\qquad\qquad\qquad\qquad\qquad\qquad\leq - \eta c_V V_{R,p}(x(s,0)).
    \label{eq:init_deriv_0}
\end{align}
Applying the Comparison Lemma \cite[Lem. 3.4]{khalil2002nonlinear} on $[\Delta_0 ,t_1]$, we obtain
$V_{R,p}(x(t_1,0)) \leq e^{-\eta c_V(t_1-\Delta_0 )} V_{R,p}(x(\Delta_0 ,0))$, and 
\begin{align}
    V_\varepsilon(x(t_1,0)) &\leq e^{-\eta c_V(t_1-\Delta_0 )} V_{R,p}(x(\Delta_0 ,0)) - \varepsilon(T) \nonumber\\
        &\leq e^{-\eta c_V(t_1-\Delta_0 )} V_\varepsilon(x(\Delta_0 ,0)), \label{eq:V_0_reg}
\end{align}
where we used $e^{-\eta c_V(t_1-\Delta_0 )}\leq 1$.
Analogously, applying the Comparison Lemma on each interval $[t_i+T,t_{i+1}]$ for $i\in\{1,\dots,j\}$, we obtain
\begin{align}
    V_\varepsilon(x(t_{i+1},i)) \leq e^{-\eta c_V(t_{i+1}-t_i-T)} V_\varepsilon(x(t_i+T,i)). \label{eq:V_i_reg}
\end{align}

\textit{Step 3: the $\KL$ bound.}
Let $\mu \coloneqq \min\{\frac{c_V}{2},\, \eta c_V\}>0$, so that $\gamma(T)\geq\frac{c_V}{2}\geq\mu$ and $\eta c_V\geq \mu$.
From \eqref{eq:V_T0} and \eqref{eq:V_0_reg}:
\begin{equation}
    V_\varepsilon(x(t_1,0)) \leq e^{-\mu(t_1-\Delta_0 )} \left(\hat\gamma(T) W(x_0) + \hat\alpha(\Delta_0 )\right).
\end{equation}
Given that the right-hand side is nonnegative, from the definition of $W$ it follows that
\begin{equation}
    W(x(t_1,0)) \leq e^{-\mu(t_1-\Delta_0 )} \left(\hat\gamma(T) W(x_0) + \hat\alpha(\Delta_0 )\right). \label{eq:W_t1}
\end{equation}

Similarly, from \eqref{eq:V_i} and \eqref{eq:V_i_reg}, using the definition of $\mu$ and 
$\left(1 - T \gamma(T)\right)\leq e^{-T\gamma(T)}$, we have
    $V_\varepsilon(x(t_{i+1},i)) \leq e^{-\mu(t_{i+1}-t_i)} W(x(t_i,i))$,
which, by the definition of $W$, yields
\begin{align}
    W(x(t_{i+1},i)) \leq e^{-\mu(t_{i+1}-t_i)} W(x(t_i,i)). \label{eq:W_i}
\end{align}

At jumps, we have $x(t_i, i) = x(t_i,i-1)$ for all $i=1,\dots,j$.
Then, applying \eqref{eq:W_i} iteratively for $i = 1, \dots, j$, we obtain
\begin{equation}
    W(x(t,j)) \leq e^{-\mu(t - t_1)}\, W(x(t_1, 0)).   \label{eq:W_allj}
\end{equation}
Substituting~\eqref{eq:W_t1} in \eqref{eq:W_allj}, and using $\Delta_0 \leq T$, we obtain 
\begin{align}
    W(x(t,j))
    &\leq e^{-\mu(t-\Delta_0 )} \left(\hat\gamma(T)\, W(x_0) + \hat\alpha(\Delta_0 )\right) \notag\\
    &\leq e^{-\mu t} \, e^{\mu T} \hat\gamma(T) \left(W(x_0) + \alpha(\Delta_0 )\right),
    \label{eq:W_final}
\end{align}
where $\alpha(\Delta_0 ) \coloneqq \hat\alpha(\Delta_0 )/\hat\gamma(T) \in \K_\infty$.
By item~\ref{thm:time_reg:dwell}, we have $t \geq (j-1)T$ for all $(t,j) \in \dom\phi$, which implies $t + j \leq t \left( 1 + \frac{1}{T} \right) + 1$, and consequently
    $t \geq \frac{T}{T+1} (t+j-1)$.
Substituting in \eqref{eq:W_final}, we obtain
\begin{align}
    W(x(t,j)) &\leq e^{-\mu\frac{T}{T+1} (t+j)} e^{\mu\frac{T}{T+1}} e^{\mu T} \hat\gamma(T)  \left(W(x_0) + \alpha(\Delta_0 )\right) \nonumber\\
    &= e^{-\hat\mu (t+j)} C_T \left(W(x_0) + \alpha(\Delta_0 )\right),
    \label{eq:W_hybrid_time}
\end{align}
where $\hat\mu\coloneqq \mu\frac{T}{T+1}$ and $C_T\coloneqq e^{\hat\mu + \mu T} \hat\gamma(T)$.

Since $W$ is positive definite and radially unbounded with respect to $\setA_{T}$, there exist $\underline\alpha, \overline\alpha \in \K_\infty$ such that
\begin{equation}
    \underline\alpha\!\left(\norm{z}_{\setA_{T}}\right)
    \leq W(x)
    \leq \overline\alpha\!\left(\norm{z}_{\setA_{T}}\right),
    \label{eq:W_sandwich}
\end{equation}
for all $z = (x,\sigma,\tau) \in \R^n\times\N_N\times[0,2T]$.

Applying \eqref{eq:W_sandwich} to \eqref{eq:W_final}, it follows that
\begin{align}
    &\norm{\phi(t,j)}_{\setA_{T}}  \leq \underline\alpha^{-1}\! \left(e^{-\hat\mu (t+j)} C_T  \left(\overline\alpha\!\left(\norm{\phi(0,0)}_{\setA_{T}}\right)
        \!+ \alpha(\Delta_0 )\right)\!\right) \nonumber\\
    &\, \leq \underline\alpha^{-1}\! \left(e^{-\hat\mu (t+j)} C_T  \tilde\alpha\!\left(\norm{\phi(0,0)}_{\setA_{T}}
        \!+ \alpha(\Delta_0 )\right)\!\right),
    \label{eq:KL_final}
\end{align}
where $\tilde\alpha(r)\coloneqq \overline\alpha(r) + r \in\K_\infty$.
Defining $\beta\in\KL$ as $\beta(r,s)\coloneqq \underline\alpha^{-1}\left(e^{-\hat\mu s} C_T \tilde\alpha(r) \right)$ concludes the proof. \hfill $\blacksquare$


\section{Proof or Lemma~\ref{lem:MRRA}}
\label{app:MRRA}

Define the matrix 
$ \hat R\coloneqq \begin{bsmallmatrix} \phantom{-}R\\ -R \end{bsmallmatrix} $.
Therefore, $\Psi_{R,\infty}(x) = \norm{Rx}_\infty=\max_{i\in\N_{2r}}\{\hat R_{i:} x\}$.

\medskip
\noindent
\ref{lem:1:lyap}) $\Rightarrow$ \ref{lem:1:MRRA}):
Since $R$ is full column rank, we have
\begin{equation}
    \beta \coloneqq - \max_{\{x:\Psi_{R,\infty}(x) = 1\}}D^+\Psi_{R,\infty}(x) > 0. \label{eq:beta}
\end{equation}
Then, by~\cite[Theorem 4.33]{blanchini2015settheoretic}, there exists a Metzler matrix $H\in\R^{2r\times2r}$ such that $H\hat R = \hat R A$, and $H \one \preceq  - \beta \one$.
Writing $H$ as a block matrix and using the definition of $\hat R$, we can rewrite 
\begin{align}
    \begin{bmatrix}
        H^{11} & H^{12} \\ H^{21} & H^{22}
    \end{bmatrix}
    \begin{bmatrix}
        R \\ -R
    \end{bmatrix} = 
    \begin{bmatrix}
        R\\ -R
    \end{bmatrix}A,
    \label{eq:lem:1:HRRA_block}
\end{align}
where $H^{11}, H^{12}, H^{21}, H^{22}\in\R^{r\times r}$ are the submatrices of $H$.
Define $\Gamma \coloneqq H^{11}-H^{12}$. From the first block row of \eqref{eq:lem:1:HRRA_block}, we obtain $\Gamma R = R A$.
Since $H$ is Metzler, $H_{i,k}\geq0$ for all $i\neq k$, and thus $|{\bar H^{11}_{i,k} - \bar H^{12}_{i,k}}| \leq \bar H^{11}_{i,k} + \bar H^{12}_{i,k}$ for all $k \neq i$. 
From the definitions of $\Gamma$ and $\mu_\infty$ \cite{desoer1972measure}, we have
\begin{align}
    \mu_\infty(\Gamma) &= \max_{i\in\N_r}\Big(H_{i,i}^{11} - H_{i,i}^{12} + \sum_{k\in\N_r,\, k\neq i} \abs{H_{i,k}^{11} - H_{i,k}^{12}} \Big) \notag\\
    &\leq \max_{i\in\N_r} \Big(H_{i,i}^{11} + H_{i,i}^{12} + \sum_{k\in\N_r,\, k\neq i} \left(H_{i,k}^{11} + H_{i,k}^{12}\right) \Big) \notag\\
    &= \max_{i\in\N_r} \sum_{k=1}^{2r} H_{i,k} = \max_{i\in\N_r} \left[H \one \right]_i  \leq -\beta < 0, \label{eq:lem:1:beta}
\end{align}
where \eqref{eq:lem:1:beta} is a consequence of $H \one \preceq  - \beta \one$ and \eqref{eq:beta}.

\medskip
\noindent
\ref{lem:1:lyap}) $\Leftarrow$ \ref{lem:1:MRRA}):
Let $P \in \R^{r\times r}$ be the matrix with entries 
$P_{i,k} = \Gamma_{i,k}$ if $i = k$, and $P_{i,k} = \max\{0, \, \Gamma_{i,k}\}$ if $i \neq k$.
Define $N \coloneqq P - \Gamma$, so that $N_{i,k}=0$ for $i=k$, and
$N_{i,k} = \max\{0,-\Gamma_{i,k}\}$ for $i\neq k$. 
The matrix $H \coloneqq\begin{bsmallmatrix}  P & N \\ N & P \end{bsmallmatrix}$ is Metzler and satisfies $H\hat R = \hat R A$ since $P - N = \Gamma$ and by hypothesis $\Gamma R = RA$.
From the definition of $\mu_\infty$ \cite{desoer1972measure}, for all $i\in\{1,\dots,2r\}$ we have
\begin{equation}
    \sum_{k=1}^{2r} H_{i,k} \! = \! \sum_{k=1}^{r} P_{i,k} \! + \! N_{i,k} \! =\! \Gamma_{i,i} \!+\! \sum_{\substack{k=1\\ k\neq i}}^{r} \abs{\Gamma_{i,k}} \!\leq\! \mu_\infty(\Gamma) \label{eq:H_pev}
\end{equation}
Defining $\beta\coloneqq-\mu_\infty(\Gamma)>0$, from \eqref{eq:H_pev} we have $H \one \preceq -\beta \one$.
By~\cite[Theorem 4.33]{blanchini2015settheoretic} the result follows. \hfill $\blacksquare$


\bibliographystyle{IEEEtran}
\bibliography{biblio}

@book{blanchini2015settheoretic,
author = {Blanchini, F. and Miani, S.},
title = {Set-Theoretic Methods in Control},
publisher = {Birkh\"{a}user},
year = {2015},
address = {Cham, Switzerland},
}

@book{liberzon2003switched,
author = {Liberzon, D.},
title = {Switching in Systems and Control},
publisher = {Birkh\"{a}user},
year = {2003},
address = {Boston, MA},
}

@book{khalil2002nonlinear,
  title={Nonlinear systems},
  author={Khalil, Hassan K and Grizzle, Jessy W},
  volume={3},
  year={2002},
  publisher={Prentice hall Upper Saddle River, NJ}
}

@book{horn2012matrix,
  title={Matrix Analysis},
  author={Horn, Roger A and Johnson, Charles R},
  year={2012},
  edition={2},
  publisher={Cambridge University Press}
}

@book{goebel2012hybrid,
 author = {R. Goebel and R. G. Sanfelice and A. R. Teel},
 title = {Hybrid Dynamical Systems: Modeling, Stability, and Robustness},
 publisher = {Princeton University Press},
 year = {2012},
 address = {Princeton, NJ}
}

@book{lindqvist2019notes,
  title={Notes on the stationary p-Laplace equation},
  author={Lindqvist, Peter},
  year={2019},
  publisher={Springer}
}

@article{soderlind2006logarithmic,
  title={The logarithmic norm. History and modern theory},
  author={S{\"o}derlind, Gustaf},
  journal={BIT Numerical Mathematics},
  volume={46},
  number={3},
  pages={631--652},
  year={2006},
  publisher={Springer}
}

@article{perov2017spectral,
  title={On the spectral abscissa and the logarithmic norm},
  author={Perov, Anatoliy Ivanovich and Kostrub, Irina Dmitrievna},
  journal={Mathematical Notes},
  volume={101},
  number={3},
  pages={677--687},
  year={2017},
  publisher={Springer}
}

@article{cinto2025switching,
  title={Switching-controlled invariance of polytopic sets for switched affine systems with dwell time},
  author={Cinto, Felipe and Vallarella, Alexis J and Russo, Antonio and Haimovich, Hernan},
  journal={SIAM Journal on Control and Optimization},
  volume={63},
  number={5},
  pages={3167--3188},
  year={2025},
  publisher={SIAM}
}

@article{albea2019practical,
  title={Practical stabilization of switched affine systems with dwell-time guarantees},
  author={Albea, C. and Garcia, G. and Hadjeras, S. and Heemels, W. P. M. H. and Zaccarian, L.},
  journal={IEEE Transactions on Automatic Control},
  volume={64},
  number={11},
  pages={4811--4817},
  year={2019},
  publisher={IEEE}
}

@article{blanchini1995nonquadratic,
  title={Nonquadratic {L}yapunov functions for robust control},
  author={Blanchini, F.},
  journal={Automatica},
  volume={31},
  number={3},
  pages={451--461},
  year={1995},
  publisher={Elsevier}
}

@article{geromel2006stabilitystabilization,
  title={Stability and Stabilization of Continuous‐Time Switched Linear Systems},
  author={Geromel, J. C. and  Colaneri, P.},
  journal={SIAM Journal on Control and Optimization},
  volume={45},
  number={5},
  pages={1915--1930},
  year={2006},
  publisher={SIAM}
}

@article{briat2015convex,
    author = {C. Briat},
    journal = {Systems \& Control Letters},
    title = {Convex conditions for robust stabilization of uncertain switched systems with guaranteed minimum and mode-dependent dwell-time},
    volume = {78},
    pages = {63-72},
    year = {2015},
    publisher = {IEEE}
}

@ARTICLE{antsaklis2009survey,
  author={Lin, Hai and Antsaklis, Panos J.},
  journal={IEEE Transactions on Automatic Control}, 
  title={Stability and Stabilizability of Switched Linear Systems: A Survey of Recent Results}, 
  year={2009},
  volume={54},
  number={2},
  pages={308-322},
}

@ARTICLE{cheng2005stabilization,
  author={D. Cheng and L. Guo and Y. Lin and Y. Wang},
  journal={IEEE Transactions on Automatic Control}, 
  title={Stabilization of switched linear systems}, 
  year={2005},
  volume={50},
  number={5},
  pages={661-666},
}

@ARTICLE{geromel2008feedback,
  author={Geromel, JosÉ C. and Colaneri, Patrizio and Bolzern, Paolo},
  journal={IEEE Transactions on Automatic Control}, 
  title={Dynamic Output Feedback Control of Switched Linear Systems}, 
  year={2008},
  volume={53},
  number={3},
  pages={720-733},
}

@article{prieur2014,
  author={Prieur, Christophe and Teel, Andrew R. and Zaccarian, Luca},
  journal={IEEE Transactions on Automatic Control}, 
  title={Relaxed Persistent Flow/Jump Conditions for Uniform Global Asymptotic Stability}, 
  year={2014},
  volume={59},
  number={10},
  pages={2766-2771},
  doi={10.1109/TAC.2014.2311611}}

@INPROCEEDINGS{russoACC,
  author={Russo, Antonio and Paolo Incremona, Gian and Cavallo, Alberto and Colaneri, Patrizio},
  booktitle={2022 American Control Conference (ACC)}, 
  title={State Dependent Switching Control of Affine Linear Systems With Dwell Time: Application to Power Converters}, 
  year={2022},
  volume={},
  number={},
  pages={3807-3813},
  doi={10.23919/ACC53348.2022.9867761}}

@article{loskot1998further,
  title={Further comments on ``Vector norms as Lyapunov functions for linear systems"},
  author={Loskot, K and Polanski, Andrzej and Rudnicki, Ryszard},
  journal={IEEE Transactions on Automatic Control},
  volume={43},
  number={2},
  pages={289--291},
  year={1998},
  publisher={IEEE}
}

@article{desoer1972measure,
  title={The measure of a matrix as a tool to analyze computer algorithms for circuit analysis},
  author={Desoer, Charles and Haneda, Hiromasa},
  journal={IEEE Transactions on Circuit Theory},
  volume={19},
  number={5},
  pages={480--486},
  year={1972},
  publisher={IEEE}
}

@article{polanski1995infinity,
  title={On infinity norms as {L}yapunov functions for linear systems},
  author={Polanski, Andrzej},
  journal={IEEE Transactions on Automatic Control},
  volume={40},
  number={7},
  pages={1270--1274},
  year={1995},
  publisher={IEEE}
}

@inproceedings{cinto2024polytopic,
  title={Polytopic {L}yapunov Functions are not Straightforward for Minimum Dwell-Time Switched Affine Systems},
  author={Cinto, Felipe and Vallarella, Alexis J and Russo, Antonio and Incremona, Gian Paolo and Haimovich, Hernan},
  booktitle={2024 IEEE 63rd Conference on Decision and Control (CDC)},
  pages={7098--7103},
  year={2024},
  organization={IEEE}
}

@INPROCEEDINGS{bolzern2004switchedaffine,
  author={Bolzern, P. and Spinelli, W.},
  booktitle={Proceedings of the 2004 American Control Conference}, 
  title={Quadratic stabilization of a switched affine system about a nonequilibrium point}, 
  year={2004},
  volume={5},
  number={},
  pages={3890--3895},
}

@ARTICLE{Yin2023,
	author = {Yin, Hao and Jayawardhana, Bayu and Trenn, Stephan},
	title = {Stability of switched systems with multiple equilibria: A mixed stable–unstable subsystem case},
	year = {2023},
	journal = {Systems and Control Letters},
	volume = {180},
	doi = {10.1016/j.sysconle.2023.105622},
}

@ARTICLE{Makarenkov201889,
	author = {Makarenkov, Oleg and Phung, Anthony},
	title = {Dwell time for local stability of switched affine systems with application to non-spiking neuron models},
	year = {2018},
	journal = {Applied Mathematics Letters},
	volume = {86},
	pages = {89 - 94},
	doi = {10.1016/j.aml.2018.06.026},
}

@ARTICLE{DiFerdinando20241691,
	author = {Di Ferdinando, M. and Pola, G. and Di Gennaro, S. and Pepe, P.},
	title = {On Sampled-Data Control of Nonlinear Asynchronous Switched Systems},
	year = {2024},
	journal = {IEEE Control Systems Letters},
	volume = {8},
	pages = {1691 - 1696},
	doi = {10.1109/LCSYS.2024.3416405},
}

@ARTICLE{Ghawash2025,
	author = {Ghawash, Faiq and Hovd, Morten and Schofield, Brad},
	title = {Model predictive control of switched affine systems with dwell time constraints—Efficient formulation, approximation and embedded implementation},
	year = {2025},
	journal = {European Journal of Control},
	volume = {86},
	pages = {},
	doi = {10.1016/j.ejcon.2025.101347},
}

@ARTICLE{Russo2025,
	author = {Russo, Antonio and Incremona, Gian Paolo and Colaneri, Patrizio},
	title = {Stabilization of Switched Affine Systems with Dwell-Time Constraint},
	year = {2025},
	journal = {IEEE Transactions on Automatic Control},
	pages = {},
	doi = {10.1109/TAC.2025.3634172},
}

@article{Parrilo2003,
  title = {Semidefinite programming relaxations for semialgebraic problems},
  volume = {96},
  DOI = {10.1007/s10107-003-0387-5},
  number = {2},
  journal = {Mathematical Programming},
  publisher = {Springer Science and Business Media LLC},
  author = {Parrilo,  Pablo A.},
  year = {2003},
  pages = {293–320}
}

\vfill

\end{document}